\newtheorem{theorem}{Theorem}
\newtheorem*{theorem*}{Theorem}
\newtheorem{lemma}{Lemma}
\newtheorem{coro}{Corollary}
\newtheorem{rmk}{Remark}
\newtheorem{defi}{Definition}
\newtheorem{assump}{Assumption}
\newtheorem{example}{Example}
\DeclareMathOperator*{\argmax}{argmax}
 \DeclareMathOperator*{\argmin}{argmin}
 \newcommand{\bR}{\mathbb{R}}
 \newcommand{\bZ}{\mathbb{Z}}
 \newcommand{\bE}{\mathbb{{E}}}
 \newcommand{\cA}{\mathcal{A}}
  \newcommand{\cP}{\mathcal{P}}
 \newcommand{\cS}{\mathcal{S}}
 \newcommand{\cT}{\widetilde{\mathcal{T}}}
 \newcommand{\cM}{\mathcal{M}}
 \newcommand{\cV}{\bR^{|\cS|}}
 \newcommand{\cL}{\mathcal{L}}
 \newcommand{\cF}{\mathcal{F}}
 \newcommand{\cD}{\mathcal{D}}
 \newcommand{\hcL}{\widehat{\mathcal{L}}}
 \newcommand{\cX}{\mathcal{X}}
\newcommand{\oV}{\overline{V}}
\newcommand{\oQ}{\overline{Q}}
\newcommand{\tV}{\widetilde{V}}
\newcommand{\tQ}{\widetilde{Q}}
 \newcommand{\KL}{\textup{KL}}
 \newcommand{\Etrans}{\bE_{s'\sim P_{s,a}}}
 \newcommand{\hEtrans}[1]{\bE_{s'\sim \widehat{P}^{#1}_{s,a}}}
 \newcommand{\Tvq}{\mathcal{T}_{V\to Q}}
 \newcommand{\Tq}{\mathcal{T}_Q}
 \newcommand{\Tz}{\mathcal{T}_Z}
 \newcommand{\Tzf}{\mathcal{T}_{Z,\mathcal{F}}}
 \newcommand{\hatTzf}{\widehat{\mathcal{T}}_{Z,\mathcal{F}}}
 \newcommand{\hP}[1]{\widehat{P}^{#1}}
 \newcommand{\Psa}{P_{s,a}}
 \newcommand{\sigPsa}{\sigma_{\Psa}}
 \newcommand{\hPsa}{\widehat{P}_{s,a}}
\newcommand{\proj}{\textup{Proj}}
\newcommand{\projA}{\proj_{\Delta^{|\cA|}}}
\newcommand{\projX}{\proj_\cX}
\newcommand{\<}{\langle}
\renewcommand{\>}{\rangle}
 \renewcommand{\Pr}{\textup{Pr}}
 \newcommand\mydots{\ifmmode\ldots\else\makebox[1em][c]{.\hfil.\hfil.}\thinspace\fi}
\newcommand{\Vmrm}{V_{\textup{MRM}}}
\newcommand{\Tmrm}{\cT_{\textup{MRM}}}
\title{Soft Robust MDPs and Risk-Sensitive MDPs: Equivalence, Policy Gradient, and Sample Complexity}
\author{%
Runyu (Cathy) Zhang\\
Harvard University\\
\texttt{\small runyuzhang@fas.harvard.edu}
\hspace{-5pt}
\And
Yang Hu\\
Harvard University\\
\texttt{\small yanghu@g.harvard.edu}
\And
Na Li \\
Harvard University \\
\texttt{\small nali@seas.harvard.edu}
}
\begin{document}

\maketitle

\vspace{-5pt}
\begin{abstract}
\vspace{-5pt}

    {Robust Markov Decision Processes (MDPs) and risk-sensitive MDPs are both powerful tools for making decisions in the presence of uncertainties. Previous efforts have aimed to establish their connections, revealing equivalences in specific formulations. 
    This paper introduces a new formulation for risk-sensitive MDPs, which assesses risk in a slightly different manner compared to the classical Markov risk measure \cite{ruszczynski2010risk}, and establishes its equivalence with a class of soft robust MDP (RMDP) problems, including the standard RMDP as a special case.
   Leveraging this equivalence, we further derive the policy gradient theorem for both problems, proving gradient domination and global convergence of the exact policy gradient method under the tabular setting with direct parameterization. This forms a sharp contrast to the Markov risk measure, known to be potentially non-gradient-dominant \cite{huang2021convergence}. We also propose a sample-based offline learning algorithm, namely the robust fitted-Z iteration (RFZI), for a specific soft RMDP problem with a KL-divergence regularization term (or equivalently the risk-sensitive MDP with an entropy risk measure). We showcase its streamlined design and less stringent assumptions due to the equivalence and analyze its sample complexity. }

\end{abstract}

\vspace{-10pt}
\section{Introduction}
\vspace{-5pt}

Making decisions amidst uncertainty presents a fundamental challenge cutting across diverse domains, including finance \citep{follmer2016stochastic,sriboonchita2009stochastic}, engineering \citep{khan2016dynamic,shahidehpour2003market}, and robotics \citep{thrun2000probabilistic} etc. Within these realms,  decisions carry consequences that depend not only on expected rewards but also on the level of uncertainty and associated risks. Addressing this challenge necessitates approaches such as robust, and risk-sensitive decision-making. These approaches explicitly incorporate uncertainty and aim to find policies that perform well across a spectrum of scenarios and adeptly strike a balance between expected gains and potential risks.


For robust decision-making in a dynamic environment, the robust Markov Decision Process (RMDP) is a popular framework. RMDPs model the environment as a Markov decision process, seeking policies that excel across various potential models. This involves solving a max-min problem, optimizing an objective function that considers the policy's worst-case performance across all models within a defined uncertainty set. The RMDP framework was introduced by \citep{iyengar2005robust,nilim2005robust}, spurring research into efficient planning algorithms when the model is given \citep{gonzalez2002minimax,xu2010distributionally,wiesemann2013robust,yu2015distributionally,mannor2016robust}.{There are also works focusing on the computational facets for these problems \cite{ho2022robust,behzadian2021fast,grand2022convex, derman2021twice} which leverage convex formulation and regularization techniques to tackle robustness.}
 In cases of unknown models \footnote{{By `unknown model' we refer to the setting where the nominal probability transition model is unknown. Both model-based and model-free methods belong to this setting, where model-based methods keep an empirical estimate of the nominal model whereas model-free algorithms don't require this empirical estimation step.}}, recent efforts have designed reinforcement learning (RL) algorithms with guarantees, but most are model-based for tabular cases, i.e., requiring an empirical estimation of the probability transition model \citep{lim2013reinforcement,panaganti2022sample,zhou2021finite,yang2022toward,Xu23,shi2022distributionally}, thereby impeding their applicability to large state spaces.  Some works focus on the model-free setting and employ linear function approximation for handling large state spaces \citep{tamar2014scaling,roy2017reinforcement,badrinath2021robust}. However, these approaches provide only asymptotic guarantees and rely on approximated robust dynamic programming, which inherently is computationally more expensive than standard dynamic programming. A recent contribution by \cite{panaganti2022robust} offers non-asymptotic sample complexity guarantees in the context of model-free robust RL. This achievement, however, introduces additional dual variables, thus adding additional computational complexity and imposing more stringent assumptions. 

An alternative approach for handling uncertainty is risk-sensitive decision-making, which intriguingly shares an elegant equivalence with robust decision-making. The concept of coherent risk measures was initially introduced and explored in 
\citep{artzner1999coherent,delbaen2002coherent,rockafellar2000optimization}, where the uncertainty is represented by a static random variable.  The connection to robustness was established by characterizing risk measures as the infimum of expected shortfall across a set of probability measures, known as the risk envelope.  The risk notion is further extended to convex risk measures which capture a broader class of risk evaluation functions \citep{follmer2002convex,ruszczynski2006optimization,follmer2002robust}. Subsequently, conditional and dynamic risk measures were introduced to generalize risk assessment from static random variables to stochastic processes \citep{artzner2007coherent,cheridito2006dynamic,follmer2006convex,detlefsen2005conditional,riedel2004dynamic,ruszczynski2006conditional,pichler2022risk}. 
In particular, \cite{ruszczynski2010risk} introduces the \textit{Markov risk measure} in the context of Markov Decision Processes (MDPs). However, the equivalence between the Markov risk measure and robust MDPs is not as straightforward as in static settings. Notably, \cite{ruszczynski2010risk,shen2013risk, BAUERLE2022953,bauerle2022distributionally,osogami2012robustness} established the equivalence between optimizing the Markov risk measure and solving a modified RMDP problem, where the uncertainty set dynamically changes with the implemented policy. This differs from the standard RMDPs, where the uncertainty sets are typically unrelated to the policy. Though \cite{osogami2012robustness} attains stronger equivalence results with RMDPs, it is only applicable to specific risk measures, such as Conditional Value at Risk (CVaR). Similar to RMDPs, optimizing Markov risk measures also faces many challenges. Firstly, building upon the equivalence with the modified RMDP with policy-dependent uncertainty set, \citet{huang2021convergence} highlights that, even in a tabular setting with direct parameterization, Markov risk measures may lack gradient-dominance – a stark contrast to the gradient domination observed in standard MDPs \citep{agarwal2020}. This implies that policy gradient algorithms may not ensure global optima, even in a straightforward, full-information environment.  Further, the sample complexity is also harder to obtain. While there is a series of efforts dedicated to optimizing the Markov risk measure within the realm of RL \citep{ccavucs2014computational,shen2014risk,kose2021risk}, these works primarily provide asymptotic convergence results. 

The challenges outlined above motivate us to investigate the potential of introducing an alternative risk formulation.
This new formulation seeks to capture risk in a way similar to Markov risk measures while achieving a stronger and broader equivalence with RMDPs. Moreover, we aim to enhance convergence properties, including the crucial aspect of gradient domination. These improvements are poised to support the development of learning algorithms for both RMDPs and risk-sensitive MDPs while maintaining provable guarantees.



\noindent\textbf{Our Contributions:}  
In this paper, we propose a new formulation for risk-sensitive MDP, whose definition incorporates the general concepts of convex risk measures. We first establish the equivalence of risk-sensitive MDP with a class of soft RMDP problems, which includes the standard RMDP as a special case. Leveraging this equivalence, we proceed to derive the policy gradient theorem for both the aforementioned class of soft RMDPs and risk-sensitive MDPs (Theorem \ref{theorem:policy-gradient}) and prove the global convergence of the exact policy gradient method under the tabular setting with direct parameterization. Our result, to the best of our knowledge, presents the first global convergence analysis with iteration complexity for a general class of risk-sensitive MDPs. 

Based on the policy gradient theorem, we also highlight the difficulty of gradient estimation using samples compared with the standard MDP setting, motivating us to seek other types of sample-based learning methods. 
In the last part of this paper, {we mainly focus on the setting of offline learning with nonlinear function approximation which is a relatively less-studied scenario, and propose a sample-based offline learning algorithm, namely the robust fitted-Z iteration (RFZI), that resembles policy iteration rather than policy gradient.}
Specifically, we focus on a setting where the regularization term for the RMDP is a KL-divergence term, which is equivalent to the risk-sensitive MDP with the entropy risk measure.  The algorithm utilizes the equivalence between the two problems, which enables simpler algorithm design. {Notably, our algorithm is model-free and does not rely on an empirically estimated probability transition model.}
The sample complexity for RFZI is also provided.  Compared with \cite{panaganti2022robust} which considers offline robust RL with sample-complexity guarantees, our work considers a different uncertainty set, requires less computational and implementation complexity, and less stringent assumptions.


Due to space limit, we defer a detailed literature review and numerical simulations to the appendix.


\vspace{-8pt}
\section{Problem Settings and Preliminaries}
\vspace{-8pt}
\paragraph{Markov Decision Processes (MDPs).} A finite Markov decision process (MDP) is defined by a tuple $\cM = (\cS, \cA, P, r, \gamma, \rho)$, where $\cS$ is a finite set of states, $\cA$ is a finite set of actions available to the agent, and $P$ is the transition probability function such that $P(s'|s,a)$ describes the probability of transitioning from one state $s$ to another $s'$ given a particular action $a$. For the sake of notation simplicity, we use $\Psa$ to denote the probability distribution $P(\cdot|s,a)$ over the state space $\cS$.  $r: \cS \times \cA \rightarrow [0,1]$ is a reward function, $\gamma\in [0,1)$ is a discounting factor, and $\rho$ specifies the initial probability distribution over the state space $\cS$.

A stochastic policy $\pi: \cS\to \Delta^{|\cA|}$  specifies a strategy where the agent chooses its action based on the current state in
a stochastic fashion; more specifically, the probability of choosing action $a$ at state $s$ is given by $\Pr(a|s) = \pi(a|s)$. A deterministic policy is a special case of the stochastic policy where for every state $s$ there is an action $a_s$ such that $\pi(a_s|s) = 1$. For notation simplicity, we slightly overload the notation and use $\pi(s)$ to denote the action $a_s$ for deterministic policies. For a given stationary policy $\pi$ and a set of transition probability distributions $\{\Psa\}_{s\in\cS, a\in\cA}$, we denote the discounted state visitation distribution by 
\begin{align*}
  \textstyle  d^{\pi,P}(s):=(1-\gamma)\sum_{t=0}^{+\infty}\gamma^t \Pr^{\pi,P}(s_t = s ~|~ s_0\sim\rho).
\end{align*}

\vspace{-3pt}
\paragraph{Robust MDPs (RMDPs) and Soft Robust MDPs.} Unlike the standard MDP which considers a fixed transition model $\{\Psa\}$, the robust MDP considers a set $\cP$ of transition probability distributions and aims to solve the sup-inf problem \cite{iyengar2005robust}
\vspace{-2pt}
 \begin{equation}\label{eq:robust MDP}
  \textstyle  \sup_{\pi} \inf_{\{\widehat P_t\in\cP\}_{t\ge0}}\bE_{s_t,a_t\sim \pi,\widehat P, s_0\sim \rho} \sum_{t=0}^{+\infty} \gamma^t \left(r(s_t,a_t) \right)
  \footnote{{For the sake of generality, we allow the transition probability to be non-stationary and the policy to be non-Markovian and stochastic. However, in later sections we will show that the sup-inf solution can be obtained by a stationary deterministic Markov policy and a stationary transition probability (Theorem \ref{theorem:dual-representation}).}}
\end{equation}
 where the objective is to find the best action sequence that maximizes a worst-case objective over all possible models in the uncertainty set $\cP$. Many papers \cite{iyengar2005robust,nilim2005robust,yang2022toward,panaganti2022sample,badrinath2021robust} consider the uncertainty set under the $(s,a)$-rectangularity condition  $\cP = \otimes_{s\in\cS,a\in\cA}\cP_{s,a}$, where $\cP_{s,a} = \{\hPsa: \ell(\hPsa,\Psa)\le \epsilon\} $, and $\ell$ is a penalty function that captures the deviation of $\hPsa$ from a nominal model $\Psa$. Some popular penalty functions are KL divergence, total variation distance, etc.
 
 In this paper, we generalize the above robust MDP problem to a wider range of problems which we call the \textit{soft robust MDP}\footnote{{We adopt the term from robust optimization literature, the concept of regularizing the adversaries actions is referred as soft-robustness \cite{ben2010soft} (or comprehensive robustness \cite{ben2006extending} and globalized robustness \cite{ben2017globalized}).}}. 
 The objective of the soft robust MDP solves the following sup-inf problem:
 \vspace{-5pt}
\begin{equation}\label{eq:soft MDP}
 \textstyle   \sup_{\pi} \inf_{\{\widehat P_t\}_{t\ge0}}\bE_{s_t,a_t\sim \pi,\widehat P, s_0\sim \rho} \sum_{t=0}^{+\infty} \gamma^t \left(r(s_t,a_t) + \gamma D(\widehat{P}_{t;s_t,a_t}, P_{s_t, a_t}) \right).
\end{equation}
\vspace{-2pt}
Note that here $\inf_{\{\widehat P_t\}_{t\ge0}}$ is with respect to all the possible state-transition probability distributions. When the penalty function $D$ is chosen as the indicator function
\vspace{-2pt}
\begin{equation*}
   \textstyle D(\hPsa,\Psa) = \left\lbrace \begin{array}{@{}cc}
        0 & \ell(\hPsa, \Psa) \le \epsilon \\
        +\infty & \textup{otherwise}
    \end{array} \right.,
\end{equation*}
\vspace{-2pt}
it recovers the robust MDP problem \eqref{eq:robust MDP}. When $D$ is set as non-indicator functions, for example, $D(\hPsa, \Psa) = \KL(\hPsa||\Psa)$, Problem \eqref{eq:soft MDP} is a robust MDP with a soft penalty term $D$ on the deviation of $\hPsa$ from $\Psa$ rather than a hard constraint on $\hPsa$. 

Similar to the robust MDP problem, we can define the optimal value function as 
\vspace{-3pt}
\begin{equation}\label{eq:min-max-value-function-star}
   \textstyle \oV^\star(s)\!:=\! \sup_{\pi}\! \inf_{\{\widehat P_t\}_{t\ge0}}\!\!\bE_{s_t\sim \widehat P} \left[\sum_{t=0}^{+\infty} \gamma^t \left(r(s_t,a_t) + \gamma D(\widehat{P}_{t;s_t,a_t}, P_{s_t, a_t}) \right)\Big|s_0 = s\right].
\end{equation}
\vspace{-3pt}
Additionally, given a stationary policy $\pi$, the value function $\oV^\pi$ under policy $\pi$ is defined as follows:
\vspace{-5pt}
\begin{equation}\label{eq:min-max-value-function-pi}
   \textstyle \oV^\pi(s):= \inf_{\{\widehat P_t\}_{t\ge0}}\bE_{s_t,a_t\sim \pi,\widehat P} \left[\sum_{t=0}^{+\infty} \gamma^t \left(r(s_t,a_t) + \gamma D(\widehat{P}_{t;s_t,a_t}, P_{s_t, a_t}) \right)\Big|s_0 = s\right].
\end{equation}
We also define the corresponding Q-functions as
\begin{align*}
    \textstyle \oQ^\star(s,a)& \textstyle \!:=\! \sup_{\{a_t\}_{t\ge 1}} \!\inf_{\{\widehat P_t\}_{t\ge0}}\!\bE_{s_t\sim\widehat P} \left[\sum_{t=0}^{+\infty} \gamma^t \!\left(r(s_t,a_t) \!+\! \gamma D(\widehat{P}_{t;s_t,a_t},\! P_{s_t, a_t}) \right)\Big|s_0 \!= \!s, a_0 \!=\! a\right]\\
    \textstyle \oQ^\pi(s,a)&\textstyle\!:=\! \inf_{\{\widehat P_t\}_{t\ge0}}\bE_{s_t,a_t\sim \pi, t\ge 1,\widehat P} \left[\sum_{t=0}^{+\infty} \gamma^t \left(r(s_t,a_t) \!+\! \gamma D(\widehat{P}_{t;s_t,a_t}, P_{s_t, a_t}) \right)\Big|s_0 \!=\! s, a_0 \!=\! a\right].
\end{align*}
\begin{rmk}[Soft Robust MDP.]
The soft robust MDP problem is useful, especially when the uncertainty set is not explicitly given. {In this case, it is more desirable to consider all possible probability transition models $\{\hP{}_t\}_{t\ge0}$ while treating the deviation from the nominal model as a soft penalty term $D$ rather than constraining it to be within a specified uncertainty set.} 
\end{rmk}

In this paper, we establish a connection between the soft robust MDP and another class of MDPs, namely risk-sensitive MDPs. To define the risk-sensitive MDP, we will first introduce the notation of convex risk measures.

\vspace{-5pt}
\paragraph{Convex Risk Measures \cite{follmer2002convex}.} 
Consider a finite set $\cS$, let $\bR^{|\cS|}$ denote the set of real-valued functions over $\cS$. 
A convex risk measure $\sigma:\bR^{|\cS|}\to \bR$ is a
function that satisfies the following properties:

\begin{enumerate}
\item Monotonicity: for any $V', V\in \cV$,  if $V'\le V$, then $\sigma(V)\le \sigma(V').$
    \item Translation invariance: for any $V\in \cV, m \in \bR$, $\sigma(V + m) = \sigma(V) - m$ .
    \item Convexity: for any $V', V\in \cV, \lambda\in[0,1]$, $\sigma(\lambda V + (1-\lambda) V') \le \lambda\sigma(V) + (1-\lambda) \sigma(V').$
\end{enumerate}

\vspace{-5pt}
Using standard duality theory, 
{it is shown in classical results \cite{follmer2002convex} that} convex risk measures satisfy the following dual representation theorem:
\begin{theorem}[Dual Representation Theorem \cite{follmer2002convex}]\label{thm:convex-risk-dual-representation}
The function $\sigma:\bR^{|\cS|}\to \bR$ is a convex risk measure if and only if there exists a ``penalty function'' $D(\cdot): \Delta^{|\cS|}\to \bR$ such that 
\begin{equation}\label{eq:dual-representation-d-to-V}
   \textstyle \sigma(V) = \sup_{\widehat\mu\in\Delta^{|\cS|}} \left(-\bE_{\widehat\mu} V - D(\widehat\mu)\right).
\end{equation}
Further, the penalty function $D$ can be chosen to satisfy the condition $D(\widehat\mu)\ge -\sigma(0)$ for any $\widehat\mu \in \Delta^{|\cS|}$ and it can be taken to be convex and lower-semicontinuous. In specific, it can be written in the following form:
\vspace{-5pt}
\begin{align}\label{eq:dual-representation-V-to-d}
   \textstyle  D(\widehat\mu) =  \sup_V\left( -\sigma(V) - \bE_{s\sim \widehat\mu}V(s)\right)
\end{align}
\end{theorem}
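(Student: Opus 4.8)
The plan is to read this as the Fenchel--Moreau biconjugate theorem specialized to $\bR^{|\cS|}$, and to translate the three axioms of a convex risk measure into structural constraints on the convex conjugate $\sigma^*$. The ``if'' direction is immediate: assuming the representation $\sigma(V) = \sup_{\widehat\mu}(-\bE_{\widehat\mu}V - D(\widehat\mu))$, each inner map $V\mapsto -\bE_{\widehat\mu}V - D(\widehat\mu)$ is affine in $V$, so their pointwise supremum is convex; monotonicity holds because $\widehat\mu\ge 0$ makes $V\mapsto -\bE_{\widehat\mu}V$ order-reversing; and translation invariance holds because $\bE_{\widehat\mu}(m\mathbf{1}) = m$ for every probability vector $\widehat\mu$.

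For the ``only if'' direction I would set $D(\widehat\mu) := \sup_V\left(-\bE_{\widehat\mu}V - \sigma(V)\right)$, which is exactly the convex conjugate $\sigma^*$ evaluated at $-\widehat\mu$. Since $\sigma$ is finite-valued and convex on all of $\bR^{|\cS|}$, it is continuous, hence proper and lower-semicontinuous, so Fenchel--Moreau yields $\sigma = \sigma^{**}$, i.e.\ $\sigma(V) = \sup_{y}\left(\langle V, y\rangle - \sigma^*(y)\right)$. The remaining task is to show that the effective domain of $\sigma^*$ is precisely $\{-\widehat\mu : \widehat\mu\in\Delta^{|\cS|}\}$, at which point the supremum over $y$ collapses to a supremum over $\widehat\mu\in\Delta^{|\cS|}$ and reproduces the claimed formula.

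This domain-restriction step is the heart of the argument, and it is where monotonicity and translation invariance are used. Monotonicity forces $y\le 0$: if some coordinate $y_s>0$, then taking $V = te_s$ and letting $t\to+\infty$ drives $\langle V, y\rangle\to+\infty$ while $\sigma(te_s)\le\sigma(0)$ stays bounded, so $\sigma^*(y)=+\infty$. Translation invariance forces $\sum_s y_s = -1$: substituting $V\mapsto V + m\mathbf{1}$ in the definition of $\sigma^*(y)$ contributes the extra term $m(\langle\mathbf{1},y\rangle + 1)$, which can be sent to $+\infty$ by choosing $m\to\pm\infty$ unless $\langle\mathbf{1},y\rangle = -1$. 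Writing $\widehat\mu = -y$, these are exactly $\widehat\mu\ge 0$ and $\sum_s\widehat\mu_s = 1$, so $y$ ranges over $-\Delta^{|\cS|}$ as desired.

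The extra properties then come essentially for free. Since $D = \sigma^*(-\,\cdot\,)$ is a conjugate function precomposed with a linear map, it is automatically convex and lower-semicontinuous; and evaluating the defining supremum at $V=0$ gives $D(\widehat\mu)\ge -\sigma(0)$. I expect the only genuine obstacle to be the domain-restriction step above, since everything else reduces to a direct invocation of Fenchel--Moreau; a secondary point to state carefully is the lower-semicontinuity of $\sigma$, which is needed for the biconjugate identity but is automatic here because finite-valued convex functions on $\bR^n$ are continuous.
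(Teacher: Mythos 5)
Your proof is correct, but there is nothing in the paper to compare it against: Theorem \ref{thm:convex-risk-dual-representation} is imported from \citet{follmer2002convex} and the paper never proves it, using it only as an ingredient in the proofs of Theorem \ref{theorem:dual-representation} and Lemma \ref{lemma:Jacobian-calculation}. Your argument is the standard finite-dimensional proof of that cited result, and every step checks out: the ``if'' direction correctly verifies the three axioms from the affine structure of the representation; in the ``only if'' direction, finiteness plus convexity of $\sigma$ on $\bR^{|\cS|}$ gives continuity, hence properness and lower semicontinuity, so Fenchel--Moreau applies; the coercivity argument with $V = t e_s$, $t \to +\infty$, correctly uses the paper's order-reversing monotonicity ($\sigma(t e_s) \le \sigma(0)$) to force $y \le 0$ on $\mathrm{dom}(\sigma^*)$; the substitution $V \mapsto V + m\mathbf{1}$ correctly isolates the term $m\left(\langle \mathbf{1}, y\rangle + 1\right)$ to force $\langle \mathbf{1}, y\rangle = -1$; and $D = \sigma^*(-\,\cdot\,)$ then inherits convexity, lower semicontinuity, and the bound $D(\widehat\mu) \ge -\sigma(0)$ exactly as you say, with \eqref{eq:dual-representation-V-to-d} matching $\sigma^*(-\widehat\mu)$ by definition.

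One cosmetic caveat worth recording: the minimal penalty $D(\widehat\mu) = \sigma^*(-\widehat\mu)$ takes the value $+\infty$ outside $\mathrm{dom}(\sigma^*)$ --- the paper's own Example \ref{example:expected shortfall} (expected shortfall) exhibits this --- so the theorem's typing ``$D : \Delta^{|\cS|} \to \bR$'' must be read as extended-real-valued. This is a defect of the statement as transcribed in the paper, not of your proof; the supremum in \eqref{eq:dual-representation-d-to-V} is unaffected, since those $\widehat\mu$ contribute $-\infty$ and can be discarded.
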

{Note that $\sigma$ and $D$ serve as the Fenchel conjugate of each other.} In most cases, the convex risk measure $\sigma(V)$ can be interpreted as the risk associated with a random variable that takes on values $V(s)$ where $s$ is drawn from some distribution $s\!\sim\!\mu$. Consequently, most commonly used risk measures are typically associated with an underlying probability distribution $\mu\!\in\!\Delta^{|\cS|}$ (e.g., Examples \ref{example:entropy-risk-measure}). This paper focuses on this type of risk measures and thus we use $\sigma(\mu,\cdot)$ to denote the risk measure, where the additional variable $\mu$ indicates the associated probability distribution.  
Correspondingly, we denote the penalty term $D(\hat{\mu})$ of $\sigma(\mu,\cdot)$ in the dual representation theorem as $D(\hat{\mu},\mu)$.\footnote{Please note that the symbol $D$ serves a dual purpose, representing both the regularization term in \eqref{eq:soft MDP} and the penalty function for a risk measure in \eqref{eq:dual-representation-d-to-V} and \eqref{eq:dual-representation-V-to-d}. This intentional notation overlap will become clear in the following sections, which reveal the connection between these two terms.}

Here we provide an example of convex risk measure and its dual form. 
\begin{example}[Entropy risk measure \cite{follmer2002convex}]\label{example:entropy-risk-measure}
For a given $\beta>0$, the entropy risk measure takes the form: 
\begin{equation*}
\textstyle \sigma(\mu, V) =  \beta^{-1}\log\bE_{s\sim \mu}e^{-\beta V(s)}.
\end{equation*}
Its corresponding penalty function $D$ in the dual representation theorem is the KL divergence
\begin{equation*}
  \textstyle  D(\widehat\mu, \mu) = \beta^{-1}\KL(\widehat\mu||\mu) = \beta^{-1}\sum_{s\in\cS} \widehat\mu(s)\log\left({\widehat\mu(s)}/{\mu(s)}\right).
\end{equation*}
\end{example}
\paragraph{Risk-Sensitive MDPs.} 


Convex risk measures capture the risk associated with random variables. It would be desirable if the notion could be adapted to the MDP to capture the risk of a given policy under the Markov process. 
{
Given an MDP $\cM$, a class of convex risk measures $\{\sigma(\Psa,\cdot)\}_{s\in\cS, a\in\cA}$, and a policy $\pi(\cdot|s)$, the risk-sensitive value function $\tV^\pi$ for the infinite discounted MDP is given as
\begin{equation}\label{eq:markov-risk-measure-pi}
 \textstyle  \tV^\pi(s) = \sum_a \pi(a|s)\left(r(s,a) - \gamma \sigma(P_{s,a}, \tV^\pi) \right), \forall s\in\cS.
\end{equation}

With the definition of risk-sensitive $\tV^\pi$, the risk-sensitive MDP problem is to find the policy that maximizes $\max_\pi \tV^\pi$. We denote the optimal value by $\tV^\star$, which is the fix-point solution of the following equation,
\vspace{-5pt}
\begin{equation}\label{eq:markov-risk-measure-star}
 \textstyle  \tV^\star(s) := \max_a \left(r(s,a) - \gamma \sigma(P_{s,a}, \tV^\star) \right), \forall s\in\cS.
\end{equation}
{It is worth noting that the fixed point operators for \eqref{eq:markov-risk-measure-pi},\eqref{eq:markov-risk-measure-star} are contractive} (proof deferred to Appendix \ref{apdx:proof-of-Bellman-eq-existence}), which immediately implies the following lemma which verifies that the fixed point equations for $\tV^\pi$ \eqref{eq:markov-risk-measure-pi} and $\tV^\star$ \eqref{eq:markov-risk-measure-star} are well-defined .
\begin{lemma}\label{lemma:Bellman-eq-existence-uniqueness}
The solution to \eqref{eq:markov-risk-measure-pi} exists and is unique. Same argument holds for \eqref{eq:markov-risk-measure-star}.
\end{lemma}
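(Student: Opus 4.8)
The plan is to realize both fixed-point equations as fixed points of Bellman-type operators on the finite-dimensional space $\cV$ and apply the Banach fixed-point theorem. Define $\cT^\pi, \cT^\star: \cV \to \cV$ by
\[
(\cT^\pi V)(s) = \sum_a \pi(a|s)\big(r(s,a) - \gamma \sigma(\Psa, V)\big), \qquad (\cT^\star V)(s) = \max_a \big(r(s,a) - \gamma \sigma(\Psa, V)\big).
\]
Then \eqref{eq:markov-risk-measure-pi} and \eqref{eq:markov-risk-measure-star} are precisely the fixed-point equations $V = \cT^\pi V$ and $V = \cT^\star V$. Since $\sigma$ is real-valued and $\cS, \cA$ are finite, both operators map $\cV$ into $\cV$, and $(\cV, \norm{\cdot}_\infty)$ is complete, so it suffices to show each operator is a $\gamma$-contraction in the sup-norm.

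The crux is a non-expansiveness estimate for the risk measure: for every associated $\mu$ and every $V, V' \in \cV$,
\[
\lrv{\sigma(\mu, V) - \sigma(\mu, V')} \le \norm{V - V'}_\infty.
\]
This uses only monotonicity and translation invariance (convexity plays no role here). Writing $m = \norm{V - V'}_\infty$, we have the pointwise bounds $V \le V' + m$ and $V' \le V + m$; applying the (order-reversing) monotonicity and then translation invariance to each gives $\sigma(\mu, V') - m \le \sigma(\mu, V)$ and $\sigma(\mu, V) - m \le \sigma(\mu, V')$, which together yield the claim.

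With this estimate in hand the contraction property is immediate. For $\cT^\pi$,
\[
\lrv{(\cT^\pi V)(s) - (\cT^\pi V')(s)} \le \gamma \sum_a \pi(a|s)\,\lrv{\sigma(\Psa, V) - \sigma(\Psa, V')} \le \gamma \norm{V - V'}_\infty,
\]
so that $\norm{\cT^\pi V - \cT^\pi V'}_\infty \le \gamma \norm{V - V'}_\infty$; for $\cT^\star$ one additionally invokes $\lrv{\max_a f(a) - \max_a g(a)} \le \max_a \lrv{f(a) - g(a)}$ to reach the same bound. Since $\gamma \in [0,1)$, both operators are contractions, and Banach's theorem yields a unique fixed point of each, establishing existence and uniqueness for \eqref{eq:markov-risk-measure-pi} and \eqref{eq:markov-risk-measure-star}. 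I expect the only mild subtlety to be the non-expansiveness step; once that is in place the remainder is routine bookkeeping.
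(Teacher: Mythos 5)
Your proof is correct and follows essentially the same route as the paper: both establish that $\cT^\pi$ and $\cT^\star$ are $\gamma$-contractions in the sup-norm using only monotonicity and translation invariance of $\sigma$ (plus the elementary max-difference inequality for $\cT^\star$), and then invoke the contraction mapping theorem. Your isolation of the non-expansiveness estimate $\lrv{\sigma(\mu,V)-\sigma(\mu,V')}\le\norm{V-V'}_\infty$ as a standalone step is a minor organizational difference, not a different argument.
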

}

\begin{rmk}
    {\color{black} We would like to emphasize that when the policy $\pi$ is stochastic, our definition of the value functions $\tV^\pi$ are different from the Markov risk measures defined in \citep{ruszczynski2010risk,huang2021convergence,tamar2015policy,tamar2017sequential} \footnote{{
Due to this difference, the value function $V^\pi$ can no longer be written as $\rho(\sum_{t=0}^{+\infty}\gamma^t r(s_t,a_t))$ where $\rho$ is a time-consistent dynamic risk measure. This makes our definition different from the usual interpretation of the dynamic risk measures.}}. However, the two quantities are equivalent when $\pi$ is deterministic. {Additionally, when further assuming that the risk measure $\sigma$ is mixture quasiconcave (c.f. \cite{Delage2016DicesionMakingUU}), the optimal policy for the Markov risk measure is also deterministic and thus the risk-sensitive MDP and the Markov risk measure obtain the same optimal value $\tV^\star$} 
 \footnote{{
    We would like to note that the equivalence of optimal value might fail if $\sigma$ is not mixture semiconcave (e.g. mean (semi)-deviation, mean (semi)-moment measures \cite{Delage2016DicesionMakingUU}) or if policy regularization is added into the value function because the optimal policy might no longer be deterministic.}}   
    (see Appendix \ref{apdx:relationship-to-Markov-risk-measures} for more details).} \vspace{-5pt}
\end{rmk}    

    
We also define the Q-function of the risk sensitive MDP as:
\begin{align*}
    \tQ^\star(s,a):= r(s,a) - \gamma \sigma(P_{s,a}, \tV^\star), \quad\tQ^\pi(s,a):= r(s,a) - \gamma \sigma(P_{s,a}, \tV^\pi).\vspace{-5pt}
\end{align*}

\textbf{Other notations: }{For any function $f:\cS\times\cA\to \bR$, state-action distribution $\mu\in\Delta(\cS\times\cA)$ the $\mu$-weighted 2-norm of $f$ is defined as $\|f\|_{2,\mu} = \left(\bE_{s,a\sim\mu}f(s, a)^2\right)^{1/2}$}. 



\vspace{-10pt}
\section{Equivalence of Soft RMDPs and Risk-Sensitive MDPs}\label{section:equivalence}
\vspace{-5pt}
\begin{theorem}[Equivalence of Soft RMDPs and Risk-Sensitive MDPs]\label{theorem:dual-representation}
For a given MDP $\cM$, a penalty function $D$, a class of convex risk measures $\{\sigma(P_{s,a},\cdot)\}$, and a stationary policy $\pi$, if the penalty function $D$ satisfies
\vspace{-5pt}
\begin{equation}\label{eq:equ-D-sigma}
   \textstyle D(\hPsa, \Psa) = \sup_V \left(-\sigma(P_{s,a},V) - \bE_{s'\sim \hPsa}V(s')\right) ,
\end{equation}
then the value functions and Q-functions of the soft RMDP and the risk-sensitive MDP are always the same. That is, ~$\oV^\star\! = \!\tV^\star \!=: \!V^\star, ~~\oV^\pi \!= \!\tV^\pi \!=:\! V^\pi,~~\oQ^\star \!=\! \tQ^\star \!=:\! Q^\star, ~~\oQ^\pi \!=\! \tQ^\pi \!=:\! Q^\pi.$ 
\footnote{{The equivalence $\oV^\pi = \tV^\pi, \oQ^\pi = \tQ^\pi$ easily extends to the setting with policy regularization, since adding regularization only requires changing the reward function $r(s,a)$ to be $r^\pi(s,a) = r(s,a) + \mathcal{R}(\pi(\cdot|s))$, where $\mathcal{R}$ is the policy regularizer, in which case the proof of Theorem 2 can still carry through naturally.}}

Further, for every initial state $s_0$, the sup-inf solution of the policy and transition probabilities for $V^\star(s_0)$ defined in \eqref{eq:min-max-value-function-star} is given by:
\begin{align}
  \textstyle  \pi^\star(s) &\textstyle= \argmax_a \left(r(s,a) - \gamma \sigma(P_{s,a}, V^\star)\right),\label{eq:pi-star}\\
   \textstyle \hP\star_{t; s,a} &\textstyle= \hP\star_{s,a} = \argmin_{\widehat{P}} D(\widehat{P}, \Psa) + \bE_{s'\sim \widehat{P}}V^\star(s').\notag
\end{align}
where \eqref{eq:pi-star} means that the optimal action sequence $\{a_t\}_{t\ge 1}$ can be achieved by implementing the deterministic policy $a_t = \pi^\star(s_t)$. 

Similarly, for any initial state $s_0$, the minimum solution of the transition probabilities for $V^\pi(s_0)$ defined in \eqref{eq:min-max-value-function-pi} is given by
\begin{align}
    \qquad~~~~~\textstyle\hP\pi_{t; s,a} = \hP\pi_{s,a} = \argmin_{\widehat{P}} D(\widehat{P}, \Psa) + \bE_{s'\sim \widehat{P}}V^\pi(s').\label{eq:hP-pi}
\end{align}
\end{theorem}
Since Theorem \ref{theorem:dual-representation} has established the equivalence of risk-sensitive MDPs and soft RMDPs, \textit{from now on we use $V^\star,V^\pi, Q^\star, Q^\pi$ to denote the value functions and Q-functions for both settings and assume by default that the penalty function $D$ and the risk measure $\sigma$ satisfy relationship ~\eqref{eq:equ-D-sigma}.
}

\begin{rmk}\label{rmk:different-equivalence}
    As a comparison to the equivalence result for the Markov risk measures \cite{ruszczynski2006conditional,ruszczynski2010risk,tamar2015policy}, their uncertainty set for the robust problem generally depends on the policy $\pi$ (see e.g. Assumption 2.2 in \cite{tamar2015policy}), while in our setting, the penalization function $D$ is independent of the policy and matches with the most standard formulation of RMDPs. 
\end{rmk}



\vspace{-5pt}
\section{Policy Gradient for Soft RMDPs}
\vspace{-5pt}
In this section, we present the policy gradient theorem for a differentiable policy $\pi_\theta$ parameterized by $\theta$, which provides an analytical method for computing the gradient in soft RMDPs. Additionally, we prove the global convergence of the exact policy gradient ascent algorithm for the direct parameterization case. For simplicity, in this section we use the abbreviations $V^\theta, Q^\theta, \hP\theta, V^{(t)}, Q^{(t)},\hP{(t)}$ to denote $V^{\pi_\theta},Q^{\pi_\theta},\hP{\pi_\theta}, V^{\pi_{\theta^{(t)}}},Q^{\pi_{\theta^{(t)}}},\hP{\pi_{\theta^{(t)}}}$, respectively.
\begin{theorem}[Policy gradient theorem]\label{theorem:policy-gradient}
Suppose that $\pi_\theta$ is differentiable with respect to $\theta$ and that $\sigma(\Psa,\cdot):\bR^{|\cS|}\to \bR$ is a differentiable function, then $V^\theta(s)$ is also a differentiable function with respect to $\theta$ and the gradient is given by
\begin{equation*}\label{eq:policy-gradient}
  \textstyle  \nabla_\theta V^\theta(s) = \bE_{a_t\sim \pi_\theta(\cdot|s_t), s_{t+1}\sim \hP\theta_{s_t,a_t}} 
  \left[\sum_{t=0}^{+\infty}\gamma^t Q^\theta(s_t,a_t)\nabla_\theta \log \pi_\theta(a_t|s_t)\Big| s_0 = s\right],
\end{equation*}
where $\hP\theta$ is defined in \eqref{eq:hP-pi}.
\end{theorem}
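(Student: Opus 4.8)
The plan is to mimic the classical policy gradient derivation --- differentiate the Bellman fixed-point equation, identify a self-referential (recursive) expression for $\nabla_\theta V^\theta$, and unroll it --- with the one essential new ingredient being an envelope-theorem computation of the gradient of the risk measure. By Theorem~\ref{theorem:dual-representation} the value function satisfies $V^\theta(s) = \sum_a \pi_\theta(a|s)\,Q^\theta(s,a)$ with $Q^\theta(s,a) = r(s,a) - \gamma\,\sigma(P_{s,a},V^\theta)$, so everything reduces to differentiating this coupled system in $\theta$.

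First I would establish that $V^\theta$ is genuinely differentiable in $\theta$. Writing the Bellman operator $(\mathcal{T}^\theta V)(s) := \sum_a \pi_\theta(a|s)\,(r(s,a) - \gamma\,\sigma(P_{s,a},V))$, the value function is the unique fixed point $V^\theta = \mathcal{T}^\theta V^\theta$ (existence and uniqueness are Lemma~\ref{lemma:Bellman-eq-existence-uniqueness}). I would apply the implicit function theorem to $F(\theta,V):=V - \mathcal{T}^\theta V = 0$. The key point is that the Jacobian $\partial_V F = I - \partial_V \mathcal{T}^\theta$ is invertible: computing $\partial_V \mathcal{T}^\theta$ row by row and using that $-\nabla_V\sigma(P_{s,a},\cdot)$ is itself a probability distribution (nonnegativity follows from monotonicity of $\sigma$, and the entries summing to one follows from translation invariance, by differentiating $\sigma(V+m\mathbf{1})=\sigma(V)-m$ in $m$), each row of $\partial_V\mathcal{T}^\theta$ has absolute row-sum exactly $\gamma$, so its $\ell_\infty$-operator norm is $\gamma<1$ and $I-\partial_V\mathcal{T}^\theta$ is invertible via a Neumann-series argument. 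This yields local differentiability of $\theta\mapsto V^\theta$. An alternative, avoiding $C^1$ regularity assumptions on $\sigma$, is to differentiate the finite-horizon truncations $V^\theta_t$ of \eqref{eq:markov-risk-measure-pi-finite}, which are differentiable by composition, and then show $V^\theta_t\to V^\theta$ and $\nabla_\theta V^\theta_t \to (\text{limit})$ uniformly via the contraction, invoking the standard theorem on uniform convergence of derivatives.

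The crux is computing $\nabla_\theta \sigma(P_{s,a},V^\theta)$. By the dual representation \eqref{eq:dual-representation-d-to-V}, $\sigma(P_{s,a},V) = \sup_{\widehat\mu}\big(-\bE_{\widehat\mu}V - D(\widehat\mu,P_{s,a})\big)$, and the maximizer is exactly the minimizer defining $\hP\theta_{s,a}$ in \eqref{eq:hP-pi} (evaluated at $V=V^\theta$). Since $\sigma(P_{s,a},\cdot)$ is assumed differentiable and is an upper envelope of functions affine in $V$, the envelope (Danskin) theorem gives $\nabla_V\sigma(P_{s,a},V^\theta) = -\hP\theta_{s,a}$, whence by the chain rule $\nabla_\theta\sigma(P_{s,a},V^\theta) = -\bE_{s'\sim\hP\theta_{s,a}}\nabla_\theta V^\theta(s')$ and therefore $\nabla_\theta Q^\theta(s,a) = \gamma\,\bE_{s'\sim\hP\theta_{s,a}}\nabla_\theta V^\theta(s')$. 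This is precisely the standard $Q$-gradient identity but with the nominal transition $P$ replaced by the worst-case transition $\hP\theta$, which is what ultimately produces the stated formula.

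Finally I would differentiate $V^\theta(s) = \sum_a\pi_\theta(a|s)Q^\theta(s,a)$ by the product rule, substitute the $Q$-gradient identity, and use $\nabla_\theta\pi_\theta(a|s) = \pi_\theta(a|s)\nabla_\theta\log\pi_\theta(a|s)$ to obtain the recursion
\begin{equation*}
\nabla_\theta V^\theta(s) = \sum_a \pi_\theta(a|s)\,Q^\theta(s,a)\,\nabla_\theta\log\pi_\theta(a|s) + \gamma\sum_a\pi_\theta(a|s)\,\bE_{s'\sim\hP\theta_{s,a}}\nabla_\theta V^\theta(s').
\end{equation*}
Unrolling this recursion --- iterating it $T$ times and letting $T\to\infty$ --- the $\gamma^t$-weighted terms are exactly expectations over trajectories generated by $\pi_\theta$ and the worst-case kernel $\hP\theta$, yielding \eqref{eq:policy-gradient}; the remainder term vanishes because $\gamma<1$ and $Q^\theta$ is bounded (rewards lie in $[0,1]$). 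The main obstacle is the first step: rigorously justifying differentiability of the implicitly defined fixed point $V^\theta$ together with the interchange of the infinite sum and $\nabla_\theta$. The envelope-theorem computation, by contrast, is the conceptual heart but is short once the dual representation is in hand, and the concluding unrolling is routine.
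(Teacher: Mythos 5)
Your proposal is correct and follows essentially the same route as the paper's proof: the implicit function theorem applied to the Bellman fixed-point equation (with Jacobian $I - \gamma \hP\theta_{\cS}$ invertible because the rows sum to $\gamma < 1$), the key identity $\nabla_V \sigma(\Psa,\cdot)\big|_{V=V^\theta} = -\hP\theta_{s,a}$ obtained from the dual representation, and unrolling the resulting recursion for $\nabla_\theta V^\theta$. The only cosmetic difference is how that identity is derived: you invoke the envelope/Danskin theorem on the dual representation in which $\hP\theta_{s,a}$ is the maximizer, while the paper's Lemma \ref{lemma:Jacobian-calculation} uses the first-order optimality condition of the conjugate problem in which $V^\theta$ is the maximizer defining $D(\hP\theta_{s,a},\Psa)$ --- two equivalent faces of the same convex duality.
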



\textcolor{black}{We leave the discussion of this result to the end of this section in Remark \ref{rmk:difficulty-gradient-estimation}.} Theorem \ref{theorem:policy-gradient} immediately implies the following corollary on the policy gradient under direct parameterization \textcolor{blue}{(c.f. \cite{agarwal2020, xiao2022convergence})}, where the parameter $\theta_{s,a}$ directly represents the probability of choosing action $a$ at state $s$, i.e., $\theta_{s,a} = \pi_\theta(a|s)$.
\begin{coro}[Policy gradient for direct parameterization]
Under direct parameterization,
\vspace{-3pt}
\begin{equation}\label{eq:policy-gradient-direct-param}
  \frac{\partial \bE_{s_0\sim\rho}V^\theta(s_0)}{\partial \theta_{s,a}} = \frac{1}{1-\gamma}d^{\pi_\theta,\hP\theta}(s)Q^\theta(s,a).
\end{equation}
\end{coro}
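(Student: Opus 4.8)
The plan is to read off the corollary as a direct specialization of the policy gradient theorem (Theorem~\ref{theorem:policy-gradient}) to direct parameterization, where the only substantive ingredient is the explicit form of the score function $\nabla_\theta \log \pi_\theta$. First I would apply $\bE_{s_0\sim\rho}$ to both sides of \eqref{eq:policy-gradient}; since $V^\theta(s_0)$ is differentiable in $\theta$ by Theorem~\ref{theorem:policy-gradient} and $\rho$ is a fixed distribution over the finite set $\cS$, the derivative passes through the (finite) expectation over $s_0$, yielding
\[
  \frac{\partial \bE_{s_0\sim\rho}V^\theta(s_0)}{\partial \theta_{s,a}}
  = \bE_{s_0\sim\rho}\,\bE_{a_t\sim \pi_\theta(\cdot|s_t),\, s_{t+1}\sim \hP\theta_{s_t,a_t}}\!\left[\sum_{t=0}^{+\infty}\gamma^t Q^\theta(s_t,a_t)\frac{\partial \log \pi_\theta(a_t|s_t)}{\partial \theta_{s,a}}\right].
\]

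Next I would compute the score. Under direct parameterization $\pi_\theta(a'|s') = \theta_{s',a'}$, so $\log \pi_\theta(a'|s') = \log \theta_{s',a'}$ and hence $\frac{\partial \log \pi_\theta(a_t|s_t)}{\partial \theta_{s,a}} = \frac{1}{\pi_\theta(a|s)}\mathbf{1}\{s_t = s,\ a_t = a\}$. Substituting this indicator into the summand annihilates every term except those on the event $\{s_t = s,\ a_t = a\}$, on which $Q^\theta(s_t,a_t) = Q^\theta(s,a)$ may be pulled out of the expectation. Rewriting the expectation of the indicator as a visitation probability then gives
\[
  \frac{\partial \bE_{s_0\sim\rho}V^\theta(s_0)}{\partial \theta_{s,a}}
  = \frac{Q^\theta(s,a)}{\pi_\theta(a|s)}\sum_{t=0}^{+\infty}\gamma^t \Pr^{\pi_\theta,\hP\theta}(s_t = s,\, a_t = a \mid s_0\sim\rho).
\]

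Finally I would factor the joint probability as $\Pr^{\pi_\theta,\hP\theta}(s_t = s,\, a_t = a) = \pi_\theta(a|s)\,\Pr^{\pi_\theta,\hP\theta}(s_t = s)$, which cancels the $\pi_\theta(a|s)^{-1}$ prefactor, and then recognize the remaining weighted sum $\sum_{t\ge 0}\gamma^t \Pr^{\pi_\theta,\hP\theta}(s_t = s\mid s_0\sim\rho)$ as $(1-\gamma)^{-1} d^{\pi_\theta,\hP\theta}(s)$ by the definition of the discounted state visitation distribution, delivering \eqref{eq:policy-gradient-direct-param} exactly. I do not expect a genuine obstacle here, as every step is an elementary substitution once Theorem~\ref{theorem:policy-gradient} is available; the only points needing mild care are justifying that $\partial/\partial\theta_{s,a}$ commutes with both $\bE_{s_0\sim\rho}$ and the infinite sum, which follow from the differentiability asserted in Theorem~\ref{theorem:policy-gradient} together with the geometric decay of the $\gamma^t$-weighted, uniformly bounded terms $Q^\theta$ over the finite state--action space.
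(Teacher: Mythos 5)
Your proposal is correct and is exactly the derivation the paper intends: the paper offers no separate proof, stating that the corollary follows immediately from Theorem~\ref{theorem:policy-gradient}, and your steps (specializing the score to $\frac{1}{\pi_\theta(a|s)}\mathbf{1}\{s_t=s,\,a_t=a\}$, collapsing the expectation to visitation probabilities under $\hP\theta$, and identifying $\sum_{t\ge 0}\gamma^t \Pr^{\pi_\theta,\hP\theta}(s_t=s\mid s_0\sim\rho)$ with $(1-\gamma)^{-1}d^{\pi_\theta,\hP\theta}(s)$) are precisely that immediate specialization. Your added care about exchanging the derivative with the finite expectation over $s_0$ and with the geometrically convergent sum is a harmless refinement of what the paper leaves implicit.
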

\vspace{-3pt}
Note that the policy gradient theorem only holds for the case where $\sigma(\Psa,\cdot)$ is differentiable; nevertheless, we can generalize \eqref{eq:policy-gradient-direct-param} to the non-differentiable case by defining the variable $G(\theta)\in \bR^{|\cS|\times|\cA|}$ as follows:
\begin{equation*}
   [G(\theta)]_{s,a} :=  \frac{1}{1-\gamma}d^{\pi_\theta,\hP\theta}(s)Q^\theta(s,a).
\end{equation*}
For both differentiable and non-differentiable cases, we could perform the following (`quasi'-)gradient ascent algorithm:
\begin{align}\label{eq:gradient-ascent}
  \textstyle  \theta^{(t+1)} = \proj_{\cX}(\theta^{(t)} + \eta G(\theta^{(t)})),
\end{align}
where $\cX = \mathop{\otimes}_{s\in \cS}\Delta^{|\cA|}$ denotes the feasible region of $\theta$. For the standard MDP case, it is known that the value function satisfies the gradient domination property under direct parameterization \cite{agarwal2020}, which enables global convergence of the policy gradient algorithm. Similar properties also hold for the soft RMDP/risk-sensitive MDP setting which is shown in the following lemma:
\begin{lemma}[Gradient domination under direct parameterization]\label{lemma:gradient-domination}
    \begin{align*}
       \textstyle \bE_{s_0\sim\rho}V^\star(s_0) - V^\theta(s) \le \left\|\frac{d^{\pi^\star,\hP\theta}}{d^{\pi_\theta,\hP\theta}}\right\|_\infty \max_{\overline\pi} \left<\overline\pi - \pi_\theta, G(\theta)\right>,
    \end{align*}
    where $\left\|\frac{d^{\pi^\star,\hP\theta}}{d^{\pi_\theta,\hP\theta}}\right\|_\infty: = \max_s \frac{d^{\pi^\star,\hP\theta}(s)}{d^{\pi_\theta,\hP\theta}(s)}.$
\end{lemma}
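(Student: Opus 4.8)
The plan is to reduce the statement to the standard gradient-domination argument by exhibiting, for each $\theta$, an auxiliary \emph{non-robust} MDP in which $V^\theta$ is an ordinary value function. Define $\cM^\theta$ to be the MDP with transition kernel $\hP\theta$ (the worst-case kernel from \eqref{eq:hP-pi}) and reward $r_\theta(s,a):= r(s,a) + \gamma D(\hP\theta_{s,a}, \Psa)$, and let $\widehat V^\pi_\theta,\widehat Q^\pi_\theta$ denote the usual risk-neutral value and Q-functions of a policy $\pi$ in $\cM^\theta$. The two facts I would establish first are: (i) $\widehat V^{\pi_\theta}_\theta = V^\theta$ and $\widehat Q^{\pi_\theta}_\theta = Q^\theta$; and (ii) $\widehat V^{\pi^\star}_\theta \ge V^\star$ pointwise. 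Fact (i) follows because the minimizer property of $\hP\theta$ in \eqref{eq:hP-pi} together with the dual representation \eqref{eq:dual-representation-d-to-V} gives $D(\hP\theta_{s,a},\Psa) + \bE_{s'\sim\hP\theta_{s,a}}V^\theta(s') = -\sigma(\Psa, V^\theta)$, so the ordinary Bellman equation for $\pi_\theta$ in $\cM^\theta$ collapses exactly onto the risk-sensitive fixed-point equation \eqref{eq:markov-risk-measure-pi}; by uniqueness (Lemma \ref{lemma:Bellman-eq-existence-uniqueness}) the value functions coincide, and the same identity yields $\widehat Q^{\pi_\theta}_\theta = Q^\theta$. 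Fact (ii) follows because $V^\star=V^{\pi^\star}$ is, by \eqref{eq:min-max-value-function-pi}, an infimum over all kernel sequences, whereas $\widehat V^{\pi^\star}_\theta$ evaluates $\pi^\star$ against the single stationary kernel $\hP\theta$ with the regularization reward $r_\theta$; the latter is precisely the regularized objective at one admissible kernel sequence, hence no smaller than the infimum.

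Combining (i) and (ii) gives $V^\star(s) - V^\theta(s) \le \widehat V^{\pi^\star}_\theta(s) - \widehat V^{\pi_\theta}_\theta(s)$, and now both terms on the right live in the \emph{same} standard MDP $\cM^\theta$, so I can invoke the ordinary performance difference lemma. Taking the initial state from $\rho$ (consistent with the definition of $d^{\pi,P}$, which already incorporates $s_0\sim\rho$) yields
\[
\bE_{s_0\sim\rho}\!\left[\widehat V^{\pi^\star}_\theta(s_0) - \widehat V^{\pi_\theta}_\theta(s_0)\right] = \frac{1}{1-\gamma}\sum_{s'} d^{\pi^\star,\hP\theta}(s')\sum_a \pi^\star(a|s')\left(\widehat Q^{\pi_\theta}_\theta(s',a) - \widehat V^{\pi_\theta}_\theta(s')\right).
\]
Substituting $\widehat Q^{\pi_\theta}_\theta = Q^\theta$ and $\widehat V^{\pi_\theta}_\theta = V^\theta = \sum_a \pi_\theta(a|\cdot)Q^\theta(\cdot,a)$ from fact (i), the inner sum telescopes to $\sum_a(\pi^\star(a|s') - \pi_\theta(a|s'))Q^\theta(s',a)$, which is the advantage-free form used in the direct-parameterization analysis.

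Finally I would change measure from $d^{\pi^\star,\hP\theta}$ to $d^{\pi_\theta,\hP\theta}$. The one point requiring care — and the main obstacle — is that the per-state inner sum need not be nonnegative, so the importance ratio cannot be pulled out of the sum directly. Following \cite{agarwal2020}, I would first upper bound $\sum_a(\pi^\star(a|s')-\pi_\theta(a|s'))Q^\theta(s',a)$ by $\max_{\overline\pi(\cdot|s')}\sum_a(\overline\pi(a|s')-\pi_\theta(a|s'))Q^\theta(s',a)$, which is nonnegative for every $s'$ (the choice $\overline\pi=\pi_\theta$ gives $0$); only then does multiplying by $d^{\pi^\star,\hP\theta}(s')/d^{\pi_\theta,\hP\theta}(s')\le \big\|d^{\pi^\star,\hP\theta}/d^{\pi_\theta,\hP\theta}\big\|_\infty$ preserve the inequality. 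Since the feasible set $\cX=\otimes_{s}\Delta^{|\cA|}$ is separable across states, the state-wise maxima reassemble into a single maximum over $\overline\pi$, and recognizing $\frac{1}{1-\gamma}d^{\pi_\theta,\hP\theta}(s')Q^\theta(s',a)=[G(\theta)]_{s',a}$ converts the expression into $\max_{\overline\pi}\langle\overline\pi-\pi_\theta, G(\theta)\rangle$, giving the claimed bound.
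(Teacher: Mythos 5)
Your proposal is correct and follows essentially the same route as the paper's proof: the paper likewise bounds $\bE_{s_0\sim\rho}V^\star(s_0)$ by evaluating $\pi^\star$ against the fixed kernel $\hP\theta$ with the regularized reward (your fact (ii)), identifies $\bE_{s_0\sim\rho}V^\theta(s_0)$ as the value of $\pi_\theta$ in that same fixed-kernel MDP (your fact (i)), applies the standard performance difference lemma, and only then takes the per-state maximum over $\overline\pi$ to gain nonnegativity before changing measure. Your explicit packaging of this as an auxiliary MDP $\cM^\theta$, with the verification that $\widehat Q^{\pi_\theta}_\theta = Q^\theta$ via the dual representation, simply makes rigorous what the paper leaves implicit when it invokes the performance difference lemma with $Q^\theta$.
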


The gradient domination property suggests that as long as the term $\left\|\frac{d^{\pi^\star,\hP\theta}}{d^{\pi_\theta,\hP\theta}}\right\|_\infty$ is not infinite, all the first order stationary points are global optimal solutions. Based on this observation, we further derive the convergence rate for the policy gradient algorithm. Before that, we introduce the following sufficient exploration assumption:
\begin{assump}[Sufficient Exploration]\label{assump:sufficient-exploration}For any policy $\pi$,
it holds that $d^{\pi,\hP\pi}(s) > 0$, where $\hP\pi$ is defined as in  \eqref{eq:hP-pi}.
We define the distributional shift factor $M$ to be a constant that satisfies
    $M \ge \frac{1}{d^{\pi,\hP\pi}(s)}$ for all state $s$ and policy $\pi$.
\end{assump}
Note that when we start with a initial distribution where $\rho(s)>0$ for every state $s$, the term $M$ can be upper bounded by $\frac{1}{(1-\gamma)\min_s\rho(s)}$. If Assumption \ref{assump:sufficient-exploration} is satisfied, it can be concluded that  $\left\|\frac{d^{\pi^\star,\hP\theta}}{d^{\pi_\theta,\hP\theta}}\right\|_\infty\le M$. Thus we could use gradient domination to derive the global convergence rate.
\begin{theorem}[Convergence rate for exact policy gradient under direct parameterization]\label{theorem:gradient-ascent} Under Assumption \ref{assump:sufficient-exploration}, by setting $\eta = \frac{(1-\gamma)^3}{2|\cA|M}$, running \eqref{eq:gradient-ascent} guarantees that
    \begin{align*}
   \textstyle \sum_{k=1}^K \left(\bE_{s_0\sim\rho}V^\star(s_0) - V^{(k)}(s_0)\right)^2 \le \frac{16|\cA| M^4}{(1-\gamma)^4}.
\end{align*}
Therefore, by setting $K \!\ge\! \frac{16|\cA| M^4}{(1-\gamma)^4 \epsilon^2}$, it is guaranteed that
~$\min_{1\le k\le K} \bE_{s_0\sim\rho}(V^\star(s_0) \!-\! V^{(k)}(s_0)) \!\le\! \epsilon.$
\end{theorem}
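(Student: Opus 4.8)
The plan is to mirror the projected-gradient-plus-gradient-domination argument used for standard MDPs under direct parameterization, adapting the smoothness estimate to the risk-sensitive/robust structure. Write $f(\theta):=\bE_{s_0\sim\rho}V^\theta(s_0)$ and $f^\star:=\bE_{s_0\sim\rho}V^\star(s_0)$, and abbreviate $\Phi(\theta):=\max_{\overline\pi}\<\overline\pi-\pi_\theta,G(\theta)\>$. Under Assumption~\ref{assump:sufficient-exploration} we have $\|d^{\pi^\star,\hP\theta}/d^{\pi_\theta,\hP\theta}\|_\infty\le M$, so Lemma~\ref{lemma:gradient-domination} reads $f^\star-f(\theta)\le M\,\Phi(\theta)$. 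The whole proof reduces to lower bounding the per-iteration increase of $f$ by a multiple of $\Phi(\theta)^2$, converting this to $(f^\star-f)^2$ via gradient domination, and telescoping.

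The first and most delicate ingredient is a smoothness bound: I would show that $f$ has $L$-Lipschitz gradient over $\cX$ with $L=\tfrac{2|\cA|M}{(1-\gamma)^3}$, so that the stated step size $\eta=\tfrac{(1-\gamma)^3}{2|\cA|M}$ equals $1/L$. Starting from \eqref{eq:policy-gradient} and its direct-parameterization form \eqref{eq:policy-gradient-direct-param}, the gradient is $[G(\theta)]_{s,a}=\tfrac{1}{1-\gamma}d^{\pi_\theta,\hP\theta}(s)Q^\theta(s,a)$, so I must bound how $d^{\pi_\theta,\hP\theta}$ and $Q^\theta$ vary with $\theta$. The genuinely new difficulty relative to standard MDPs is that the transition kernel $\hP\theta$ defining both the visitation distribution and $Q^\theta$ itself depends on $\theta$ through $V^\theta$ (equation \eqref{eq:hP-pi}); differentiating through this implicit dependence (as in the proof of Theorem~\ref{theorem:policy-gradient}) is what produces the extra factor $M$ in $L$ compared to the non-robust smoothness constant $\Theta(|\cA|/(1-\gamma)^3)$. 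I expect this smoothness estimate to be the main obstacle, since it requires controlling the sensitivity of the adversarial kernel and of the induced state-visitation measure simultaneously.

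Given $L$-smoothness, the per-step progress follows from the variational characterization of the projection. Since $\theta^{(k+1)}=\argmax_{\theta'\in\cX}\{\<G(\theta^{(k)}),\theta'-\theta^{(k)}\>-\tfrac{1}{2\eta}\|\theta'-\theta^{(k)}\|^2\}$ and $\eta\le 1/L$, the smoothness inequality gives $f(\theta^{(k+1)})-f(\theta^{(k)})\ge \<G,\theta^{(k+1)}-\theta^{(k)}\>-\tfrac{1}{2\eta}\|\theta^{(k+1)}-\theta^{(k)}\|^2$, and the right-hand side is at least its value at any competitor $\theta'=\pi_{\theta^{(k)}}+\lambda(\overline\pi^\star-\pi_{\theta^{(k)}})\in\cX$, where $\overline\pi^\star$ attains $\Phi(\theta^{(k)})$ and $\lambda\in[0,1]$. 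Using $\|\overline\pi^\star-\pi_{\theta^{(k)}}\|^2\le 2|\cS|$ and optimizing over $\lambda$ yields $f(\theta^{(k+1)})-f(\theta^{(k)})\ge \tfrac{\eta}{4|\cS|}\Phi(\theta^{(k)})^2$ (the large-$\Phi$ regime $\lambda^\star>1$ only improves the bound). Combining with gradient domination $\Phi(\theta^{(k)})\ge (f^\star-f(\theta^{(k)}))/M$ gives $f(\theta^{(k+1)})-f(\theta^{(k)})\ge \tfrac{\eta}{4|\cS|M^2}(f^\star-f(\theta^{(k)}))^2$.

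Finally I would telescope. Since $f$ is nondecreasing along the iterates and takes values in $[0,\tfrac{1}{1-\gamma}]$ (rewards lie in $[0,1]$ and the penalty is nonnegative), summing the last inequality over $k=1,\dots,K$ gives $\tfrac{\eta}{4|\cS|M^2}\sum_{k=1}^K(f^\star-f^{(k)})^2\le f^\star-f^{(1)}\le\tfrac{1}{1-\gamma}$. Substituting $1/\eta=\tfrac{2|\cA|M}{(1-\gamma)^3}$ produces $\sum_k(f^\star-f^{(k)})^2\le \tfrac{8|\cS||\cA|M^3}{(1-\gamma)^4}$, and the state count is eliminated by the observation that $M\ge 1/\min_s d^{\pi_\theta,\hP\theta}(s)\ge|\cS|$ (since $\sum_s d^{\pi_\theta,\hP\theta}(s)=1$), which yields the claimed $\tfrac{16|\cA|M^4}{(1-\gamma)^4}$ up to the numerical constant that is absorbed into the value-range and smoothness bounds. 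The per-iteration guarantee then follows from $\min_{1\le k\le K}(f^\star-f^{(k)})\le\sqrt{\tfrac1K\sum_k(f^\star-f^{(k)})^2}$ and solving for $K$. One remaining technical point is the non-differentiable case: there $G(\theta)$ is defined by the formula rather than as $\nabla f$, so I would verify that the smoothness-type inequality $f(\theta^{(k+1)})-f(\theta^{(k)})\ge\<G,\theta^{(k+1)}-\theta^{(k)}\>-\tfrac{L}{2}\|\theta^{(k+1)}-\theta^{(k)}\|^2$ still holds for this $G$, either via a performance-difference identity for risk-sensitive MDPs or by approximating $\sigma$ with differentiable risk measures and passing to the limit using continuity of $d^{\pi_\theta,\hP\theta}$ and $Q^\theta$ in $\theta$.
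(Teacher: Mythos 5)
Your scaffolding (the projection-based progress bound, gradient domination via Lemma \ref{lemma:gradient-domination}, telescoping, and the observation $M \ge |\cS|$ to absorb the state count) is sound and would indeed deliver the stated $\frac{16|\cA|M^4}{(1-\gamma)^4}$ bound --- but only conditionally on your first ingredient, the claim that $f(\theta) = \bE_{s_0\sim\rho}V^\theta(s_0)$ has $L$-Lipschitz gradient with $L = \frac{2|\cA|M}{(1-\gamma)^3}$. That claim is where the entire proof lives, and you leave it unproven (``I would show\dots'', ``I expect this to be the main obstacle''). This is a genuine gap, not a deferral of routine work. The robust value function is defined through an infimum over transition kernels (equivalently, through a fixed point involving $\sigma$), so $\nabla_\theta V^\theta$ --- when it exists --- involves the adversarial kernel $\hP\theta$, which is itself an $\argmin$ depending on $\theta$ through $V^\theta$ (see \eqref{eq:hP-pi}). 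Gradient Lipschitzness would require quantitative stability of this $\argmin$ (uniqueness plus second-order-type regularity of $D$ or $\sigma$), which is unavailable at the generality of Theorem \ref{theorem:gradient-ascent}: the theorem covers the quasi-gradient update \eqref{eq:gradient-ascent} for non-differentiable risk measures as well (e.g., expected shortfall, Example \ref{example:expected shortfall}, where $D$ is an indicator and the minimizing kernel can jump discontinuously), in which case $f$ is not even differentiable and no smoothness constant exists. Your closing suggestion --- approximate $\sigma$ by differentiable risk measures and pass to the limit --- compounds the problem, since it needs smoothness constants uniform over the approximation, i.e.\ the same unproven claim.

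The paper's proof avoids smoothness altogether, and the mechanism producing the extra factor of $M$ is different from the one you conjecture. The paper lower-bounds the per-step progress $\bE_{s_0\sim\rho}[V^{(k+1)}(s_0) - V^{(k)}(s_0)]$ by the performance difference lemma under the fixed adversarial kernel $\hP{(k+1)}$ (using that $V^{(k)}$ is an infimum over kernels, hence upper-bounded by its value at $\hP{(k+1)}$, while $V^{(k+1)}$ attains its value there). This yields two terms: a main term involving $Q^{(k)}$, handled by the projection variational inequality (Lemma \ref{lemma:projection-property-1}), giving at least $\frac{\eta}{M}\|G_\eta^{(k)}\|_2^2$ where $G_\eta^{(k)}$ is the gradient mapping; and an error term involving $Q^{(k+1)}-Q^{(k)}$, handled by mere Lipschitz continuity of $\pi \mapsto Q^\pi$ in $\ell_1$ (Lemma \ref{lemma:Q-smoothness}, constant $\frac{1}{(1-\gamma)^2}$) --- a statement about values, not gradients, valid without any differentiability. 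The worse dependence on $M$ enters because the visitation measure coming from the performance difference is $d^{\pi^{(k)},\hP{(k+1)}}$ while the one inside $G(\theta^{(k)})$ is $d^{\pi^{(k)},\hP{(k)}}$, and matching them costs a ratio bounded below only by $1/M$ (Remark \ref{rmk:proof-dependency-on-M}) --- not because a smoothness constant inflates by $M$. The paper then needs a separate claim converting $\|G_\eta^{(k)}\|_2$ into the optimality gap at iterate $k+1$ (again via the projection inequality plus Lemma \ref{lemma:Q-smoothness}) before telescoping. To salvage your route you would have to (i) restrict to risk measures with explicit second-order structure (e.g.\ the entropy/KL case, where $\hP\theta(s'|s,a)\propto P(s'|s,a)e^{-\beta V^\theta(s')}$ is smooth in $V^\theta$) and (ii) actually carry out the sensitivity analysis of $d^{\pi_\theta,\hP\theta}$ and $Q^\theta$ through $\hP\theta$ --- at which point the conclusion would still be weaker than the theorem as stated.
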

If we apply the same proof technique to standard MDPs, the convergence rate is $O\left(\frac{|\cA|M^2}{(1-\gamma)^4}\right)$. The dependency on the distributional shift factor $M$ is worse for soft RMDPs, which is caused by the choice of a smaller stepsize $\eta$ (see Remark \ref{rmk:proof-dependency-on-M} in the Appendix for more details). 
It is an interesting open question whether this worse dependency is fundamental or just a proof artifact.

\begin{rmk}[Difficulties of Sample-based Gradient Estimation]\label{rmk:difficulty-gradient-estimation}
{Though Theorem \ref{theorem:gradient-ascent} establishes the global convergence of exact policy gradient, it is hard to generalize the result to sample-based settings. Note that the policy gradient in Theorem \ref{theorem:policy-gradient} takes a similar form as compared to standard MDPs \citep{sutton1998introduction}, however, there's a primary distinction that the expectation is taken over trajectories sampled from the probability transition model $\hP\theta$ instead of the nominal model $P$. Consequently, when confined to samples exclusively from the nominal model, estimating this expectation becomes exceptionally challenging, particularly in the context of non-generative models.}
\end{rmk}

\vspace{-5pt}
\section{Offline Reinforcement Learning of the KL-soft RMDP}
\vspace{-5pt}

{ Since the previous section considers learning with full information and studies iteration complexity, the major motivation for this section is to examine sample-based learning for risk sensitivity MDPs and soft robust MDPs. As discussed in Remark \ref{rmk:difficulty-gradient-estimation}, developing sample-based policy gradient learning methods might be difficult, therefore, we seek an alternative sample-based method that resembles policy iteration rather than policy gradient. Specifically, we mainly focus on the setting of offline learning with nonlinear function approximation which is a relatively less-studied scenario.} Moreover, due to the challenge in developing a method for soft MDPs with general $D$ functions (or equivalently for risk sensitive MDPs with general risk functions $\sigma$), in this section, we look into a particular and important case of soft RMDP where the regularization term is the KL-divergence, i.e.,
\vspace{-3pt}
\begin{equation}\label{eq:KL-soft-MDP}
\textstyle    \max_{\pi} \min_{\widehat P_t}\bE_{s_t,a_t\sim \pi,\widehat P_t, s_0\sim \rho} \sum_{t=0}^{+\infty} \gamma^t \left(r(s_t,a_t) + \gamma \beta^{-1} \KL(\widehat P_{t;s_t,a_t}|| P_{s_t,a_t})\right).
\end{equation}
The hyperparameter $\beta$ represents the penalty strength of the deviation of $\widehat P$ from $P$, the smaller $\beta$ is, the larger the penalty strength. From Example \ref{example:entropy-risk-measure} and Theorem \ref{theorem:dual-representation}, the KL-soft RMDP is equivalent to the risk-sensitive MDP problem with the risk measures $\sigma(\Psa,\cdot)$ chosen as the entropy risk measure \begin{equation*}\sigma(\Psa,V) = \beta^{-1}\log\bE_{s'\sim \Psa} e^{-\beta V(s')}.
\end{equation*}
In this case, the Bellman equations for the value functions $V^\pi, V^\star, Q^\pi, Q^\star$ are given by:
\begin{small}
\begin{align*}
   \textstyle V^\pi\!(s) &\textstyle\!=\! \sum_a \!\pi(a|s)Q(s,a),  \hspace{10pt}Q^\pi(s,a) \!=\! r(s,a) \!-\! \gamma\beta^{\!-\!1}\!\log \Etrans e^{-\beta V^\pi\!(s')},\\
\textstyle V^\star\!(s) &\textstyle\!=\! \max_a Q(s,a),\hspace{26pt} Q^\star(s,a) =  r(s,a) - \gamma\beta^{-1}\log \Etrans e^{-\beta V^*(s')}.
\end{align*}
\end{small}
For notational simplicity, we define the Bellman operator on the Q-functions $\Tq: \bR^{|\cS|\times|\cA|} \to \bR^{|\cS|\times|\cA|}$ as:
\begin{align}\label{eq:Tq}
  \textstyle  [\Tq Q](s,a) := r(s,a) - \gamma\beta^{-1}\log \bE_{s'\sim P(\cdot|s,a)} e^{-\beta\max_{a'}Q(s',a')}.
\end{align}
It is not hard to verify from the above arguments that the optimal Q function $Q^\star$ satisfies $$Q^\star = \Tq Q^\star.$$
\vspace{-6pt}
\paragraph{Offline robust reinforcement learning.} The remainder of the paper focuses on finding the optimal robust policy $\pi^\star$ for the soft robust MDP problem \eqref{eq:KL-soft-MDP}. Specifically, we explore offline robust reinforcement learning algorithms which use a pre-collected dataset $\cD$ to learn $\pi^\star$. The dataset is typically generated under the nominal model $\{\Psa\}_{s\in\cS,a\in\cA}$, such that $\cD = \{s_i, a_i, r_i, s_i'\}_{i=1}^N$, where the state-action pairs $(s_i, a_i) \sim \mu$ are drawn from a specific data-generating distribution $\mu$. 

\begin{defi}[Robustly Admissible Distributions]
A distribution $\nu \!\in \!\Delta^{|\cS|\times|\cA|}$  is robustly admissible if there exists $h\!\ge\! 0$ and a policy $\pi$ and transition probability $\hP{}\!\in\!\{P'\!:\!\KL(P'_{t;s,a}||P_{s,a})\!\le\!\beta\}$ (both can be non-stationary) such that $\nu(s,a) = \Pr(s_h,a_h|s_0\sim\rho, \pi,\hP{})$.
\end{defi}
\vspace{-3pt}
\begin{assump}[Concentrability]\label{assump:concentrability}
The data-generating distribution $\mu$ satisfies concentrability if there exists a constant $C$ such that for any $\nu$ that is robustly admissible, $\max_{s,a}\frac{\nu(s,a)}{\mu(s,a)}\le C.$
\end{assump}
\begin{rmk}
The notion of robustly admissible distribution and concentrability are adapted from the the corresponding notions defined for the standard MDP setting \cite{chen2019information}, where they also demonstrate the necessity of this assumption for standard RL with function approximation. It would be an interesting open question whether Assumption \ref{assump:concentrability} is also necessary for robust RL settings. Recent works for standard offline RL also show that by considering variations of the RL algorithms (e.g. exploring pessimism \cite{xie2021bellman} or the primal-dual formulation \cite{zhan2022offline}), the concentrability assumption can be weakened to single-policy concentrability. Another interesting future direction is to study whether applying similar approaches for the soft RMDP would result in the same improvement.


\end{rmk}

\vspace{-6pt}
\subsection{Robust Fitted-Z Iteration (RFZI) }\label{section:RFZI}
\vspace{-5pt}

The offline robust MDP learning method we propose is Robust fitted-Z iteration (RFZI). The main idea is to utilize the fix point equation $Q^\star = \Tq Q^\star$ with the Bellman operator \eqref{eq:Tq} from the corresponding equivalent risk-sensitive MDP. However, $\Tq$ involves a term $\log\bE_{s'\sim\Psa}$ which is hard to approximate with empirical estimation. Thus, instead of directly solving $Q^*$ using $Q^\star=\Tq Q^\star$, we introduce an auxiliary variable, Z-function and solve a fix point equation for $Z$, which play an important role in our algorithm design and theoretical analysis.
\vspace{-10pt}
\paragraph{The Z-functions.}
For a given Q-function $Q:\cS\times\cA\to \mathbb{R}$,  we define its corresponding Z-function as below:
\vspace{-3pt}
\begin{align*}
  \textstyle   Z(s,a):= \Etrans e^{-\beta\max_{a'} Q(s',a')}.
\end{align*}
One can establish the relationship between the Z-function and the Q-function by
\begin{equation*}
 \textstyle   [\Tq Q](s,a) = r(s,a) - \gamma\beta^{-1}\log Z(s,a).
\end{equation*}
Further, we also define the Z-Bellman operator on Z-functions as:
\begin{align*}
 \textstyle   [\Tz Z](s,a) := \Etrans e^{-\beta\max_{a'} (r(s',a') - \gamma\beta^{-1}\log Z(s',a'))}.
\end{align*}
Then $\Tq [\Tq Q]](s,a) = r(s,a) - \gamma\beta^{-1}\log [\Tz Z](s,a).$
Thus, instead of solving $Q^\star = \cT_Q Q^\star$, an alternative approach is to solve $Z^\star = \cT_Z Z^\star$ and recover $Q^\star$ by $Q^\star = r \!-\! \gamma\beta^{-1}\!\log Z^\star$. This is the key intuition of our RFZI algorithm. Note that compare with $\Tq$, $\Tz$ eliminates the $\log$ dependency on the expectation term $\bE_{s'\sim\Psa}$, which makes it easier for empirical estimation. 
\vspace{-6pt}
\paragraph{Function approximation and projected Z-Bellman operator.} 
Given that $\cT_Z$ is a contraction mapping, the solution $Z^\star$ can be obtained by running $Z_{k+1} = \cT_Z Z_k$, $\lim_{k\to+\infty}Z_k = Z^\star.$
However, when the problem considered is of large state space, it is computationally very expensive to compute the Bellman operator $\cT_Z$ exactly. Thus function approximation might be needed to solve the problem approximately. Given a function class $\cF$, we define the projected Z-Bellman operator as:
\begin{align*}
   \textstyle [\Tzf Z](s,a) := \argmin_{Z'\in \cF} \|Z' - \Tz Z\|_{2,\mu}^2.
\end{align*}
One can verify that $\Tzf Z$ is also the minimizer of the following loss function $\cL$, 
\begin{align*}
  \textstyle  \cL(Z',Z)&\textstyle:= \bE_{s,a\sim\mu} \Etrans \left( Z'(s,a) - \exp{\left(-\beta\max_{a'} (r(s',a') - \gamma\beta^{-1}\log Z(s',a'))\right)}\right)^2,
\end{align*}
i.e., $\Tzf Z = \argmin_{Z'\in \cF} \cL(Z', Z)$.


We make the following assumptions on the expressive power of the function class $\cF$
\begin{assump}[Approximate Completeness]\label{assump:approx-completeness}
$ \sup_{Z\in \cF}\inf_{Z'\in\cF}\|Z' - \Tz Z\|_{2,\mu} \le \epsilon_c$.
\end{assump}
\begin{assump}[Positivity]\label{assump:positivity}
$e^{-\frac{\beta}{1-\gamma}}\le Z\le 1, ~~\forall Z\in \cF.$ 
\end{assump}
\vspace{-5pt}
\paragraph{Approximate the projected Z-Bellman operator with empirical loss minimization.} The computation of the loss function $\cL$ requires knowledge of the empirical model $\Psa$ which the algorithm doesn't have access to. Thus, we introduce the following empirical loss to further approximate the loss function $\cL$. Given an offline data set $\{(s_i, a_i, s_i')\}_{i=1}^N$ generated from the distribution $(s_i, a_i)\!\sim \!\mu, ~s_i'\!\sim\! P_{s_i, a_i}$, we can define the empirical loss $\hcL$ as:
\begin{align*}
   \textstyle
   \hcL(Z',Z)&\textstyle:= \frac{1}{N}\sum_{i=1}^N  \left( Z'(s_i,a_i) - \exp{\left(-\beta\max_{a'} (r(s_i',a') - \gamma\beta^{-1}\log Z(s_i',a'))\right)}\right)^2
\end{align*}
Given the empirical loss $\hcL$ the empirical projected Bellman operator is defined as $\hatTzf Z:= \argmin_{Z'\in \cF} \hcL(Z', Z)$. Our robust fitted Z iteration (RFZI) is essentially updating the Z-functions iteratively by $ Z_{k+1} = \hatTzf Z_k.$
The detailed algorithm is displayed in Algorithm \ref{alg:RFQI}.
\begin{algorithm}
\caption{Robust Fitted Z Iteration (RFZI)}
\label{alg:RFQI}
\begin{algorithmic}[1]
\State \textbf{Input:} Offline dataset $\cD = (s_i, a_i, r_i, s_i')_{i=1}^N$, function class $\cF$.
\State \textbf{Initialize:} $Z_0 = 1\in \cF$
\For{$k = 0, \dots, K-1$}
\State Update $Z_{k+1} = \argmin_{Z\in \cF} \hcL(Z, Z_k)$.
\EndFor 
\State \textbf{Output:} $\pi_K = \argmax_a r(s,a) - \gamma\beta^{-1}\log Z_K(s,a)$
\end{algorithmic}
\end{algorithm}
\vspace{-5pt}
\subsection{Sample complexity}
This section provides the theoretical guarantee for the convergence of the RFZI algorithm. Due to space limit, we defer the proof sketches as well as detailed proofs to Appendix \ref{apdx:RFQI}.
\begin{theorem}[Sample complexity for RFZI]\label{theorem:sample-complexity} Suppose Assumption \ref{assump:concentrability},\ref{assump:approx-completeness} and \ref{assump:positivity} hold, then for any $\delta\in (0,1)$, with probability at least $1-\delta$, the policy $\pi_K$ obtained from RFQI algorithm (Algorithm \ref{alg:RFQI}) satisfies:
\vspace{-5pt}
\begin{equation*}
    \bE_{s_0\sim\rho} V^\star(s_0) - V^{\pi_K}(s_0)\le \frac{2\gamma^K}{(1\!-\!\gamma)^2} + \gamma \beta^{-1}e^{\frac{\beta}{1-\gamma}}\frac{2C}{(1\!-\!\gamma)^2}\left(4\sqrt{\frac{2\log(|\cF|)}{N}} \!+\! 5\sqrt{\frac{2\log(8/\delta)}{N}} \!+\!\epsilon_c\right).
\end{equation*}
\end{theorem}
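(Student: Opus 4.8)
The plan is to decompose the suboptimality $\bE_{s_0\sim\rho}[V^\star(s_0)-V^{\pi_K}(s_0)]$ into an optimization error that decays geometrically in $K$, an approximation error controlled by $\epsilon_c$, and a statistical error from the finite dataset, and then to accumulate the per-iteration errors using the $\gamma$-contraction of $\Tz$ together with the change of measure provided by Assumption \ref{assump:concentrability}.

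First I would carry out the per-iteration error analysis. Writing $\Tzf$ for the population projection, the triangle inequality gives $\|Z_{k+1}-\Tz Z_k\|_{2,\mu}\le \|\hatTzf Z_k-\Tzf Z_k\|_{2,\mu}+\|\Tzf Z_k-\Tz Z_k\|_{2,\mu}$, where the second term is at most $\epsilon_c$ by Assumption \ref{assump:approx-completeness}. The first (statistical) term is the heart of the concentration argument: the crucial observation is that, conditioned on $(s_i,a_i)$, the empirical regression target $\exp(-\beta\max_{a'}(r(s_i',a')-\gamma\beta^{-1}\log Z_k(s_i',a')))$ has conditional mean exactly $[\Tz Z_k](s_i,a_i)$, so that $\hatTzf Z_k$ is an unbiased least-squares fit of $\Tz Z_k$. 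Assumption \ref{assump:positivity} guarantees every target lies in the bounded interval $[e^{-\beta/(1-\gamma)},1]$ (using $r\in[0,1]$ and $-\gamma\beta^{-1}\log Z\in[0,\gamma/(1-\gamma)]$), so a uniform deviation bound over the \emph{finite} class $\cF$---a union bound combined with Hoeffding/Bernstein concentration of the squared loss---yields a statistical error of order $\sqrt{\log|\cF|/N}+\sqrt{\log(1/\delta)/N}$ with probability $1-\delta$. Allocating the failure probability across the two contributions produces the constants $4$, $5$ and the $\log(8/\delta)$ appearing in the bound.

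Second I would propagate these per-step errors and convert them from $Z$-space into a value-function bound. Telescoping the contraction $\|\Tz Z_k-Z^\star\|\le\gamma\|Z_k-Z^\star\|$ over the $K$ iterates, started from $Z_0=\mathbf{1}$, yields the $\gamma^K$ optimization term together with a geometric sum $\sum_k\gamma^k$ of the per-step errors, which is the source of one factor $1/(1-\gamma)$. To pass from $Z$ to $Q=r-\gamma\beta^{-1}\log Z$, I would use that on $[e^{-\beta/(1-\gamma)},1]$ the map $Z\mapsto-\gamma\beta^{-1}\log Z$ is Lipschitz with constant $\gamma\beta^{-1}e^{\beta/(1-\gamma)}$ (since $1/Z\le e^{\beta/(1-\gamma)}$), which is precisely the prefactor in the theorem; a standard greedy-policy bound then relates $V^\star-V^{\pi_K}$ to $\|Q_K-Q^\star\|$, contributing a second factor $1/(1-\gamma)$.

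The last and most delicate step is the change of measure, which I expect to be the main obstacle. The suboptimality $V^\star-V^{\pi_K}$ is naturally expressed through expectations under the robust occupancy measures induced by $\pi^\star$ and $\pi_K$ together with their worst-case transition models $\hP\star$ and $\hP{\pi_K}$, whereas every regression error is measured in $\|\cdot\|_{2,\mu}$. I would show that, because the adversarial models from the dual representation \eqref{eq:hP-star}--\eqref{eq:hP-pi} lie in the KL-ball $\{P':\KL(P'_{t;s,a}\|P_{s,a})\le\beta\}$, the resulting occupancy measures are robustly admissible, so Assumption \ref{assump:concentrability} applies and each change of measure costs at most the factor $C$. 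The care required here is to route the $L_2(\mu)$ errors through the performance-difference decomposition for the \emph{robust} Bellman operator without accruing extra horizon factors, and to verify admissibility for the nonstationary worst-case models that arise along the trajectory. Combining the three error sources, the Lipschitz prefactor, and the concentrability factor $C$ then gives the stated bound.
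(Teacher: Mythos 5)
Your proposal assembles the right ingredients---the unbiased-regression view of the empirical targets with a union bound over the finite class $\cF$, the approximate-completeness term $\epsilon_c$, the log-Lipschitz factor $\gamma\beta^{-1}e^{\beta/(1-\gamma)}$, the robust performance-difference step, and the concentrability change of measure through robustly admissible distributions---but the error-propagation step, as you state it, has a genuine flaw. You telescope $\|\Tz Z_k - Z^\star\|\le \gamma\|Z_k - Z^\star\|$ in $Z$-space. The operator $\Tz$ is \emph{not} a $\gamma$-contraction on the domain permitted by Assumption~\ref{assump:positivity}: writing $[\Tz Z](s,a)=\Etrans \min_{a'} e^{-\beta r(s',a')}Z(s',a')^{\gamma}$, the map $z\mapsto z^{\gamma}$ has derivative $\gamma z^{\gamma-1}$, which on $[e^{-\beta/(1-\gamma)},1]$ can be as large as $\gamma e^{\beta}$; taking $r\equiv 0$ and $Z$ constant near the lower endpoint shows $\Tz$ strictly expands sup-norm distances whenever $\beta>\log(1/\gamma)$. (The main text's remark that $\cT_Z$ is a contraction is motivational and is never used in the paper's actual argument.) Moreover, even if a contraction were available, your ordering ``telescope first, change measure at the end'' cannot work: a sup-norm telescope needs sup-norm control of the per-step errors, but the regression only controls them in $\|\cdot\|_{2,\mu}$, and conversely Bellman operators are not contractions in $L_2(\mu)$. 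The change of measure must therefore be applied \emph{inside} the recursion, once per iteration, not as a final patch.

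The repair is a reorganization of your own ingredients into the paper's structure. First convert each \emph{per-step} $Z$-error into a $Q$-error via your Lipschitz estimate, $\|Q_k-\Tq Q_{k-1}\|_{1,\mu}\le \gamma\beta^{-1}e^{\beta/(1-\gamma)}\|Z_k-\Tz Z_{k-1}\|_{1,\mu}$ (Lemma~\ref{lemma:bellman-error-decomposition}), rather than converting $\|Z_K-Z^\star\|$ at the end. Then propagate in $Q$-space, where $\Tq$ genuinely contracts with factor $\gamma$ through the adversarial kernel: for any robustly admissible $\nu$, $\bE_{\nu}(Q_k-Q^\star)\le \bE_{\nu}[Q_k-\Tq Q_{k-1}]+\gamma\,\bE_{\nu'}(Q_{k-1}-Q^\star)$, where $\nu'$ is the pushforward of $\nu$ through $\hP\star$ (again robustly admissible), and concentrability gives $\bE_{\nu}[Q_k-\Tq Q_{k-1}]\le C\|Q_k-\Tq Q_{k-1}\|_{1,\mu}$ at every step; induction yields $\frac{\gamma^k}{1-\gamma}+C\sum_{m}\gamma^{k-m}\|Q_m-\Tq Q_{m-1}\|_{1,\mu}$ (Lemma~\ref{lemma:bellman-contraction}). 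Your step~3 already gestures at exactly this mechanism (``each change of measure costs at most $C$''), so nothing conceptually new is needed---but it must replace, not follow, the $Z$-space telescope. Combined with the performance-difference decomposition under $\hP{\pi_K}$ (Lemma~\ref{lemma:performance-difference}) and your concentration/completeness bounds, this closes the argument and produces the stated constants.
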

\vspace{-5pt}
The performance gap in Theorem \ref{theorem:sample-complexity} consists of three parts. The first part $\frac{2\gamma^K}{(1-\gamma)^2}$ captures the effect of $\gamma$-contraction of the Bellman operators. The second term, which is the term with $\gamma \beta^{-1}e^{\frac{\beta}{1-\gamma}}\frac{2C}{(1-\gamma)^2}\epsilon_c$, is related to the approximation error caused by using function approximation. The third term $\gamma \beta^{-1}e^{\frac{\beta}{1-\gamma}}\frac{2C}{(1-\gamma)^2}\left(4\sqrt{\frac{2\log(|\cF|)}{N}} + 5\sqrt{\frac{2\log(8/\delta)}{N}}\right)$ is caused by the error of replacing the projected Z-Bellman operator with its empirical version. 
\begin{rmk}[Comparison and Discussions]
    Under similar Bellman completeness and concentrability assumptions, the sample complexity for risk-neutral offline RL \cite{chen2019information} is $\widetilde O\!\left(\!C\frac{\log|\cF|}{(1-\gamma)^4\epsilon^2}\right)$, while our result gives $O\!\left(\!C^2 \!\left(\!\beta^{-1}e^{\frac{\beta}{1-\gamma}}\!\right)^2\!\frac{\log|\cF|}{(1-\gamma)^4\epsilon^2}\!\right)$ (assuming $\epsilon_c \!=\! 0$). As a consequence of robustness, our bound has a worse dependency on the concentrability factor $C$ and an additional factor $\left(\beta^{-1}e^{\frac{\beta}{1-\gamma}}\right)^2$. Note that the term $\beta^{-1}e^{\frac{\beta}{1-\gamma}}$ first decreases and then increases with $\beta$ as it goes from $0$ to $+\infty$, suggesting that the hyperparameter $\beta$ also affects the learning difficulty of the problem. The choice of $\beta$ should not be either too large or too small, ideally on the same scale with $1\!-\!\gamma$. {It is still unclear to us whether the exponential dependency $e^{\frac{\beta}{1-\gamma}}$ is a proof artifact or intrinsic in our setting, however, there are results under similar settings that suggest this exponential dependency on parameter $\beta$ and the effective length $\frac{1}{1-\gamma}$ is fundamental (e.g. Theorem 3 in \cite{fei2020risk}).} 
    
   {We also compare our performance bound with the RFQI algorithm \cite{panaganti2022robust} which considers a similar offline learning setting and obtains sample complexity $O\left(\frac{\log(|\mathcal{F}||\mathcal{G}|)}{(\beta\epsilon)^2(1-\gamma)^6}\right)$, where $\beta$ in their setting is the radius of the uncertainty set. Note that both results share the same dependency on $\epsilon$ and the concentrability constant $C$. However, the bound in \cite{panaganti2022robust} includes an additional term on the size of the dual variable space $\log |\mathcal{G}|$, whereas we have the exponential dependence term $e^{\frac{2\beta}{1-\gamma}}$.}
\end{rmk}

\vspace{-5pt}
\section{Conclusions and Discussions}
\vspace{-5pt}
This paper proposes a new formulation of risk-sensitive MDP and establishes its equivalence with the soft robust MDP. This equivalence enables us to develop the policy gradient theorem and prove the global convergence of the exact policy gradient method under direct parameterization. Additionally, for the KL-soft robust MDP (or equivalently the risk-sensitive MDP with entropy risk measure) scenario, we propose a sample-based offline learning algorithm, namely the robust fitted-Z iteration (RFZI), and analyze its sample complexity. 

Our work admittedly has its limitations. Currently, our policy gradient result is limited to the exact gradient case, and further research is needed to extend it to approximate gradients. The RFZI algorithm is specifically designed for KL-soft problems and may be more suitable for small action spaces. Our future work will focus on developing practical algorithms that can handle large or even continuous state and action spaces, as well as generalizing the approach to accommodate different penalty functions.

\section*{Acknowledgement}
This work was funded by NSF AI institute: 2112085,  NSF ECCS: 2328241.


\bibliographystyle{plainnat}
\bibliography{bib.bib}
\appendix
\newpage
\section{More Related Works}

\paragraph{Relationship between risk and robustness:}  

Apart from the works mentioned in the Introduction, there are also other works that discuss the relationship between risk and robustness under different settings or utility functions \cite{noorani2021riskrobust,noorani2022regularization,eysenbach2021maximum, derman2020distributional}. In particular, \citet{eysenbach2021maximum} do not establish exact equivalence, but rather an equivalence of a lower-bound of certain robust objective; \citet{noorani2021riskrobust,noorani2022regularization} considers exponential utility which is a different risk measure with the risk-sensitive MDP considered in the paper as well as the Markov risk measure. 


\paragraph{Policy Gradient for Risk-Sensitive/Robust Learning:} There are many previous works focusing on applying policy gradient-based algorithm for risk-sensitive/robust learning \cite{tamar2012policy, tamar2015policy, tamar2017sequential, huang2021convergence, yu2023global, jaimungal2022robust, coache2023reinforcement, wang2022convergence,wang2022policy,kumar2023policy,li2023policy}. However, most of the works lack theoretical guarantees on the global convergence of these gradient-based algorithms \cite{tamar2012policy, tamar2015policy, tamar2017sequential,jaimungal2022robust, coache2023reinforcement}. There are some recent studies focusing on the global convergence guarantees for the policy gradient theorem. In particular, the works by \citet{huang2021convergence} and \citet{yu2023global} are most related to our work. \citet{huang2021convergence} give negative results showing that Markov risk measures are not gradient-dominant. \citet{yu2023global} prove global convergence for one specific type of risk measure - expected conditional risk measures (ECRMs). There are also works that considers policy gradient for RMDP with different types of uncertainty sets, for example, \citet{wang2022policy} considers RMDPs with a particular R-contamination ambiguity set
\citet{kumar2023policy,li2023policy,wang2022convergence} considers policy gradient for RMDP with different uncertainty sets (e.g. s-rectangularity and non-rectangularity uncertainty set), which is out of the scope of discussion of this paper.

\paragraph{Offline learning for robust RL:} More and more attention is drawn to the offline learning of robust MDP. In the tabular setting, \citet{zhou2021finite}
examined the uncertainty set defined by the KL divergence for offline data with uniformly lower bounded
data visitation distribution. \citet{shi2022distributionally} and \citet{li2022settling} provide near-optimal sample complexity bound for offline RL with weaker data coverage assumptions. There are also works focusing on offline learning with large state space, for example, \citet{ma2022distributionally} considers offline RL with linear approximation to deal with large state space. Our offline algorithm is most related to the work by \citet{panaganti2022robust}, where they propose the fitted-Q iteration where the Q function as well as certain dual variables are approximated by possibly nonlinear functions such as neural networks.


\paragraph{Sample complexity for robust RL:}
Apart from the offline setting, there are a number of works focusing on finite-sample performance
guarantees of robust RL algorithms under different data-generating mechanisms. For example, \citet{panaganti2022sample, yang2022toward,shi2023curious} developed sample complexities for a model-based robust RL algorithm with a variety
of uncertainty sets where the data are collected using a generative model. In the online learning setting, \citet{wang2021online} proposed a robust Q-learning
algorithm with an R-contamination uncertain set which achieves a similar bound as
its non-robust counterpart. \citet{badrinath2021robust} proposed a model-free algorithm with linear function approximation to cope with large state spaces.




\paragraph{Other related works} In addition to advances in the learning for RMDPs, there are also works focusing on the planning and computational facets for these problems \cite{ho2022robust,behzadian2021fast,grand2022convex}. Furthermore, research efforts have extended beyond theoretical considerations (e.g. \cite{la2013actor,pinto2017robust,derman2018soft,mankowitz2019robust,xu2022trustworthy,zhang2020robust,ding2023seeing}) where robust RL algorithms are proposed to handle more complicated practical problems. 

A variety of different approaches have been proposed to model and address risk and robustness. Notably, in addition to Markov risk measures and robust MDPs, researchers have explored alternative methodologies, such as the use of exponential utility \cite{howard1972risk, fei2020risk, fei2021exponential, fei2021risk, noorani2021risk, noorani2022risk}, constraint MDPs \cite{wang2022robust, chow2017risk, hasanzadezonuzy2021learning, geibel2005risk}, distributional RL\cite{bellemare2017distributional, singh2020improving, ma2020dsac, stanko2019risk, sun2021exploring}, and robust control \cite{khalil1996robust, zhang2021policy} etc.  

We would also like to note that the risk-sensitive MDP with entropy risk measure (which we have proved to be equivalent to the KL-soft RMDP) is related but not identical to the exponential utility \cite{howard1972risk, fei2020risk, fei2021exponential, fei2021risk, noorani2021risk, noorani2022risk}. There are works that discuss the equivalence of optimizing the exponential utility and solving the KL-soft RMDP in the finite-horizon undiscounted-sum setting \cite{osogami2012robustness}. However, the result cannot be generalized to the infinite-horizon-discounted-sum setting which is considered in this paper. Under this setting, it is unclear whether exponential utility still obtains a similar interpretation in terms of robustness.


\section{Numerical Simulations}\label{sec:numerics}

In this section, we present simulation results that evaluate the exact policy gradient algorithm (see Section \ref{section:equivalence}) and the RFZI algorithm (see Section \ref{section:RFZI}) in the following environment.

\begin{wrapfigure}{r}{5cm}
    \centering\vspace*{-8pt}
        \tikzset{snode/.style = {draw=black, shape=circle, line width=1.0pt, inner sep=0pt, minimum width=15pt}}
    \begin{tikzpicture}
        \fill[fill=Yellow!40!white] (51.43:40pt) arc (51.43:-25.71:40pt) -- (-25.71:60pt) arc (-25.71:51.43:60pt) -- cycle;
        \draw[decorate, decoration={text along path, text={|\color{Goldenrod}\footnotesize|medium risk}, text align=center}] (38.57:63pt) arc (38.57:-12.86:63pt);
        
        \fill[fill=red!20!white] (-77.14:40pt) arc (-77.14:-154.29:40pt) -- (-154.29:60pt) arc (-154.29:-77.14:60pt) -- cycle;
        \draw[decorate, decoration={text along path, text={|\color{red}\footnotesize|high risk}, text align=center}] (-141.43:68pt) arc (-141.43:-90:68pt);
        
        \fill[fill=LimeGreen!40!white] (-180:40pt) arc (-180:-231.43:40pt) -- (-231.43:60pt) arc (-231.43:-180:60pt) -- cycle;
        \draw[decorate, decoration={text along path, text={|\color{Green}\footnotesize|low risk}, text align=center}] (-192.86:63pt) arc (-192.86:-218.57:63pt);

        \node[snode] (P1) at (90:50pt) {\footnotesize 0};
        \node[snode] (P2) at (64.29:50pt) {\footnotesize 0};
        \node[snode] (P3) at (38.57:50pt) {\footnotesize -1};
        \node[snode] (P4) at (12.86:50pt) {\footnotesize 2};
        \node[snode] (P5) at (-12.86:50pt) {\footnotesize -1};
        \node[snode] (P6) at (-38.57:50pt) {\footnotesize 0};
        \node[snode] (P7) at (-64.29:50pt) {\footnotesize 0};
        \node[snode] (P8) at (-90:50pt) {\footnotesize -10};
        \node[snode] (P9) at (-115.71:50pt) {\footnotesize 5};
        \node[snode] (P10) at (-141.43:50pt) {\footnotesize -10};
        \node[snode] (P11) at (-167.14:50pt) {\footnotesize 0};
        \node[snode] (P12) at (167.14:50pt) {\footnotesize $\frac{9}{10}$};
        \node[snode] (P13) at (141.43:50pt) {\footnotesize 1};
        \node[snode] (P14) at (115.71:50pt) {\footnotesize 0};

        \path (P1) edge (P2);
        \path (P2) edge (P3);
        \path (P3) edge (P4);
        \path (P4) edge (P5);
        \path (P5) edge (P6);
        \path (P6) edge (P7);
        \path (P7) edge (P8);
        \path (P8) edge (P9);
        \path (P9) edge (P10);
        \path (P10) edge (P11);
        \path (P11) edge (P12);
        \path (P12) edge (P13);
        \path (P13) edge (P14);
        \path (P14) edge (P1);
    \end{tikzpicture}
    \caption{An exemplary 14-state environment with high-, medium-, and low-risk zones.}\label{fig:small-toy-env}
    \vspace*{-16pt}
\end{wrapfigure}
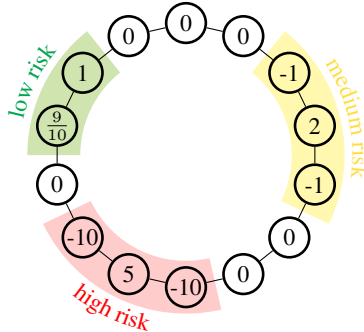

\paragraph{Environment setups: traveling on a cycle graph.} This is a environment with finite state and action space consisting of $n$ states $\cS = \bZ_n$, which can be conceptually regarded as arranged on a cycle. The initial distribution is a uniformly random distribution over $\cS$. At each state the agent is allowed to select one action from $\cA = \lbrace -1, 0, 1 \rbrace$ (corresponding to \texttt{left}, \texttt{stay} and \texttt{right}), and receives a reward $r(s)$ that only depends on $s$ ($r(\cdot)$ is called the hitting reward function). As the aliases suggest, an action $a$ at state $s$ is supposed to move the agent to $(s+a) \mod n$. The uncertainty in the environment appears in the form of stochastic transitions, and is characterized by the probability $\alpha \in [0, \frac{1}{2}]$ of missing the expected destination by one step; i.e., transition probability is
\begin{equation*}
    P(s' \mid s,a) = \begin{cases}
      \alpha & s'=(s+a \pm 1) \mod n \\
      1-2\alpha & s' = (s+a) \mod n \\
      0 & \text{otherwise}
    \end{cases}.
\end{equation*}
In this section, we focus on solving the KL-soft robust MDP problem \eqref{eq:KL-soft-MDP} in this environment with varying penalty magnitude $\beta$.

\paragraph{Metrics.} The performance of the algorithms may be evaluated by the following metrics:
\begin{itemize}
  \item \textit{Optimality gap} $\bE_{s_0 \sim \rho}[V^{\star}(s_0) - V^{\pi}(s_0)]$, where $\pi$ is the policy generated by the algorithm.
  \item \textit{Average test reward} over a few test episodes (20 by default).
  \item \textit{Robustness value} $\widehat{V}_{\pi}(\delta) := \inf_{P \in \cP_{\delta}} \bE^{\pi,P} \left[ \sum_{t=0}^{H} \gamma^t r(s_t, a_t) \;\middle|\; s_0 \sim \rho \right]$, where the model uncertainty set is selected as $\cP_{\delta} := \lbrace \widetilde{P} \mid \KL( \widetilde{P}_{s,a} || P_{s,a} ) \leq \delta,~ \forall s,a \rbrace$.
\end{itemize}
Note that the optimality gap and average test reward are usually plotted along the training trajectory, while the robustness value is usually plotted against the perturbation magnitude $\delta$.

\paragraph{Exact policy gradient.} We first examine the performance of the exact policy gradient algorithm under direct parameterization. Here we consider a 14-state environment as illustrated in Figure \ref{fig:small-toy-env}, where the rewards are marked in the nodes representing states. For this specific example, we can roughly classify the states into  high-, medium-, and low-risk zones, as suggested in the figure.

For clarity of exposition, for now we focus on three specific settings, i.e. $(\alpha, \beta) = (0.01, 0.1)$, $(0.01, 1.0), (0.15, 1.0)$ (more results can be found in Section \ref{sec:more-numeric-details}). The optimality gap curves under these settings are shown in Figure \ref{fig:simulation-PG-toy} below. It can be observed that the optimality gap decays to exactly 0 in all these settings, which justifies Theorem \ref{theorem:gradient-ascent} that guarantees convergence of the exact policy gradient algorithm under direct parameterization. We also point out that the loss curve is a little crooked because we use projected gradient ascent, so that the optimization dynamics is not smooth when the policy at each state is pushed to the boundary.

  \begin{figure}[ht]
    \centering\captionsetup{justification=centering}
    \begin{subfigure}[b]{0.32\linewidth}
      \centering
      \includegraphics[width=\linewidth]{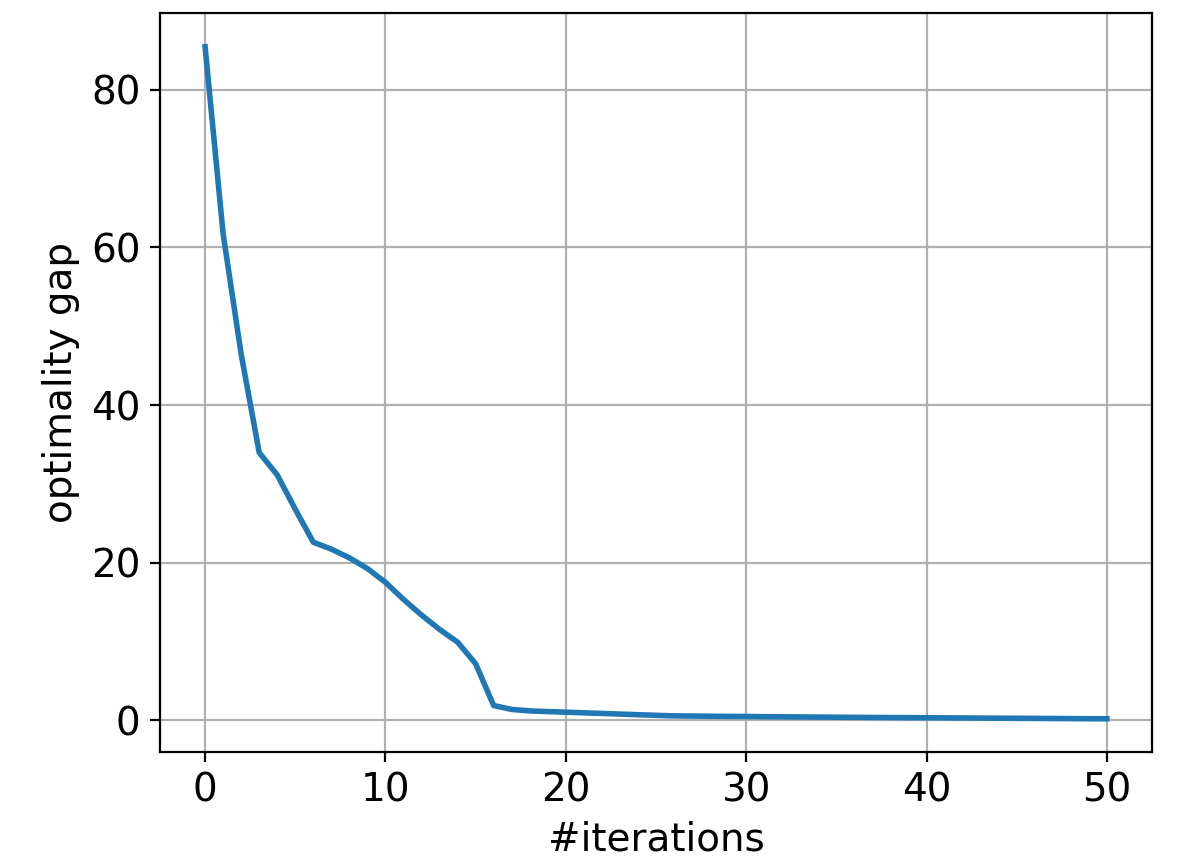}
      \subcaption{$\alpha = 0.01$, $\beta = 0.1$.}
    \end{subfigure}
    \begin{subfigure}[b]{0.32\linewidth}
      \centering
      \includegraphics[width=\linewidth]{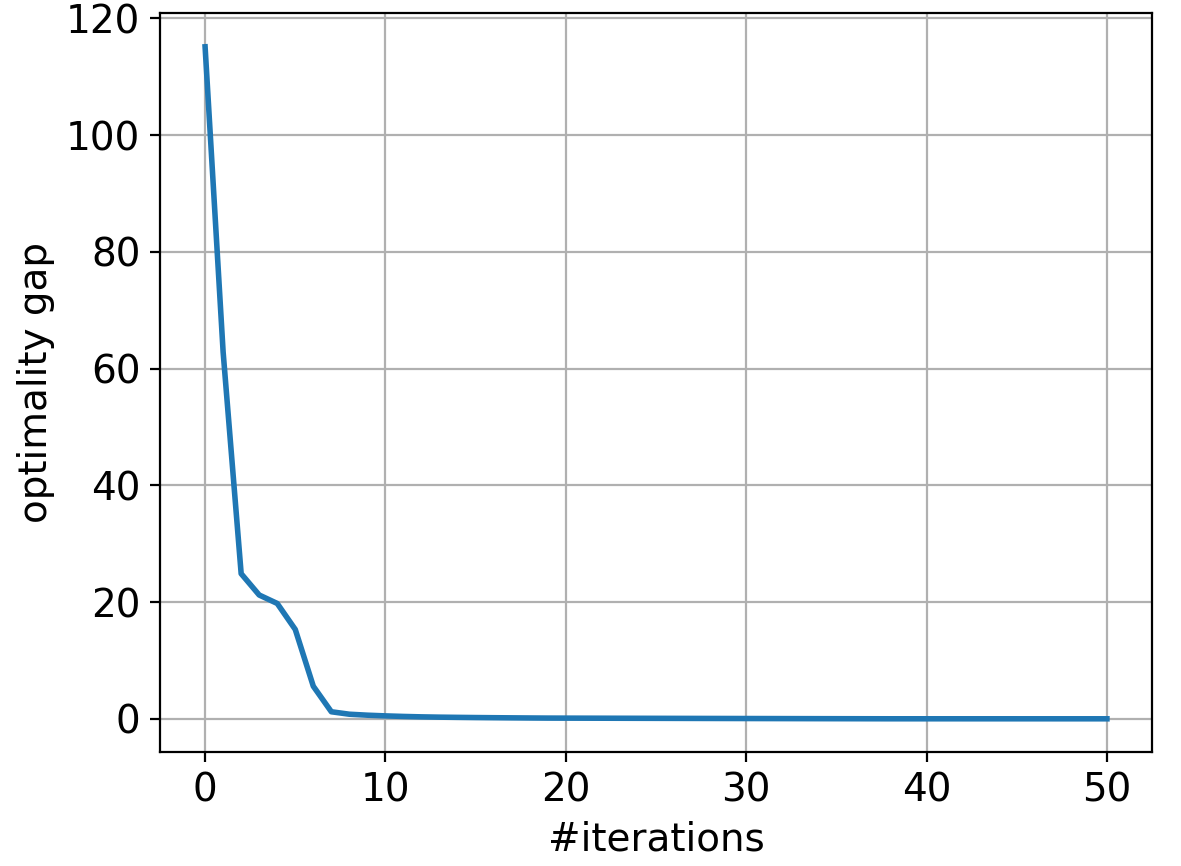}
      \subcaption{$\alpha = 0.01$, $\beta = 1.0$.}
    \end{subfigure}
    \begin{subfigure}[b]{0.32\linewidth}
      \centering
      \includegraphics[width=\linewidth]{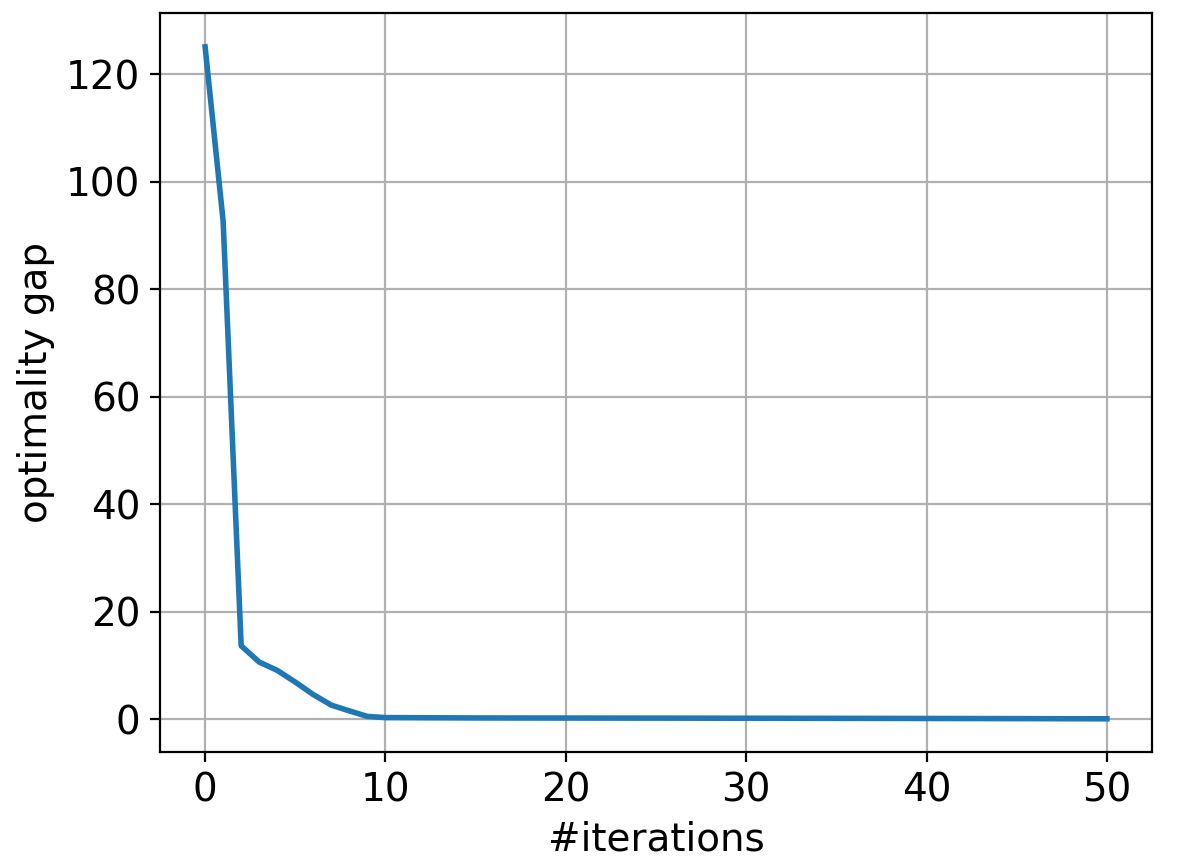}
      \subcaption{$\alpha = 0.15$, $\beta = 1.0$.}
    \end{subfigure}
    \caption{Optimality gap curves for the exact policy gradient algorithm in different settings.}\label{fig:simulation-PG-toy}
  \end{figure}

Further, we take a closer look at the learned policies in these settings. The policies are illustrated in Figure \ref{fig:PG-illustrated-policy}. To understand the role of $\beta$, we compare Figure \ref{fig:PG-illustrated-policy:a} with \ref{fig:PG-illustrated-policy:b} --- when the uncertainty in the model (represented by $\alpha$) is fixed and mild, the optimal policy for $\beta = 0.1$ is to greedily pursue the largest possible reward (i.e., staying at the state in high-risk zone with hitting reward 5); however, an agent with higher risk-sensitivity level $\beta = 1$ cares more about the potential losses caused by model uncertainty, and thus its optimal policy shifts to seeking safer options (i.e., moving to the medium-risk zone and staying there).

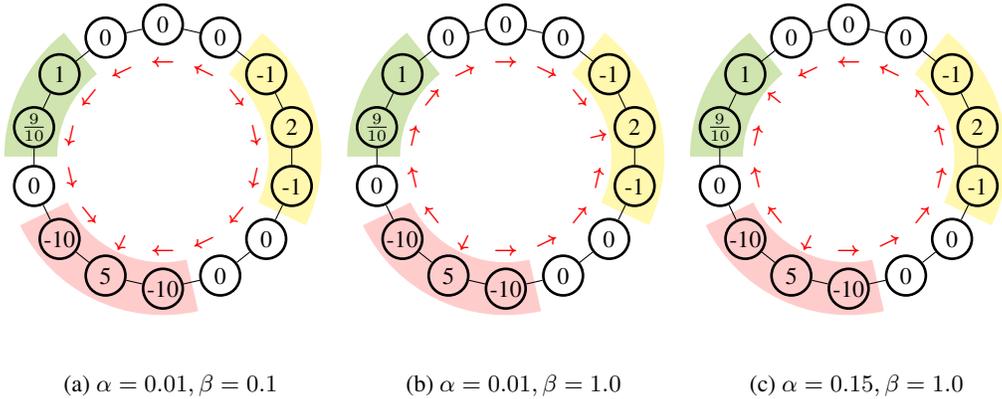
\begin{figure}[ht]
  \centering
  \tikzset{snode/.style = {draw=black, shape=circle, line width=1.0pt, inner sep=0pt, minimum width=15pt}}
  \begin{subfigure}[b]{0.32\linewidth}
      \centering
          \begin{tikzpicture}
        \fill[fill=Yellow!40!white] (51.43:40pt) arc (51.43:-25.71:40pt) -- (-25.71:60pt) arc (-25.71:51.43:60pt) -- cycle;
        \fill[fill=red!20!white] (-77.14:40pt) arc (-77.14:-154.29:40pt) -- (-154.29:60pt) arc (-154.29:-77.14:60pt) -- cycle;
        \fill[fill=LimeGreen!40!white] (-180:40pt) arc (-180:-231.43:40pt) -- (-231.43:60pt) arc (-231.43:-180:60pt) -- cycle;

        \node[snode] (P1) at (90:50pt) {\footnotesize 0};
        \node[snode] (P2) at (64.29:50pt) {\footnotesize 0};
        \node[snode] (P3) at (38.57:50pt) {\footnotesize -1};
        \node[snode] (P4) at (12.86:50pt) {\footnotesize 2};
        \node[snode] (P5) at (-12.86:50pt) {\footnotesize -1};
        \node[snode] (P6) at (-38.57:50pt) {\footnotesize 0};
        \node[snode] (P7) at (-64.29:50pt) {\footnotesize 0};
        \node[snode] (P8) at (-90:50pt) {\footnotesize -10};
        \node[snode] (P9) at (-115.71:50pt) {\footnotesize 5};
        \node[snode] (P10) at (-141.43:50pt) {\footnotesize -10};
        \node[snode] (P11) at (-167.14:50pt) {\footnotesize 0};
        \node[snode] (P12) at (167.14:50pt) {\footnotesize $\frac{9}{10}$};
        \node[snode] (P13) at (141.43:50pt) {\footnotesize 1};
        \node[snode] (P14) at (115.71:50pt) {\footnotesize 0};

        \path (P1) edge (P2);
        \path (P2) edge (P3);
        \path (P3) edge (P4);
        \path (P4) edge (P5);
        \path (P5) edge (P6);
        \path (P6) edge (P7);
        \path (P7) edge (P8);
        \path (P8) edge (P9);
        \path (P9) edge (P10);
        \path (P10) edge (P11);
        \path (P11) edge (P12);
        \path (P12) edge (P13);
        \path (P13) edge (P14);
        \path (P14) edge (P1);

        \node at (90:36pt) {\rotatebox{180}{\scriptsize\color{red} $\bm{\rightarrow}$}};
        \node at (64.29:36pt) {\rotatebox{154.29}{\scriptsize\color{red} $\bm{\rightarrow}$}};
        \node at (38.57:36pt) {\rotatebox{128.57}{\scriptsize\color{red} $\bm{\leftarrow}$}};
        \node at (12.86:36pt) {\rotatebox{102.86}{\scriptsize\color{red} $\bm{\leftarrow}$}};
        \node at (-12.86:36pt) {\rotatebox{77.14}{\scriptsize\color{red} $\bm{\leftarrow}$}};
        \node at (-38.57:36pt) {\rotatebox{51.43}{\scriptsize\color{red} $\bm{\leftarrow}$}};
        \node at (-64.29:36pt) {\rotatebox{25.71}{\scriptsize\color{red} $\bm{\leftarrow}$}};
        \node at (-90:36pt) {\rotatebox{0}{\scriptsize\color{red} $\bm{\leftarrow}$}};
        \node at (-115.71:36pt) {\rotatebox{-25.71}{\scriptsize\color{red} $\bm{\downarrow}$}};
        \node at (-141.43:36pt) {\rotatebox{-51.43}{\scriptsize\color{red} $\bm{\rightarrow}$}};
        \node at (-167.14:36pt) {\rotatebox{-77.14}{\scriptsize\color{red} $\bm{\rightarrow}$}};
        \node at (167.14:36pt) {\rotatebox{-102.86}{\scriptsize\color{red} $\bm{\rightarrow}$}};
        \node at (141.43:36pt) {\rotatebox{-128.57}{\scriptsize\color{red} $\bm{\rightarrow}$}};
        \node at (115.71:36pt) {\rotatebox{-154.29}{\scriptsize\color{red} $\bm{\rightarrow}$}};
      \end{tikzpicture}
      \caption{$\alpha = 0.01, \beta = 0.1$}\label{fig:PG-illustrated-policy:a}
  \end{subfigure}
  \begin{subfigure}[b]{0.32\linewidth}
      \centering
          \begin{tikzpicture}
        \fill[fill=Yellow!40!white] (51.43:40pt) arc (51.43:-25.71:40pt) -- (-25.71:60pt) arc (-25.71:51.43:60pt) -- cycle;
        \fill[fill=red!20!white] (-77.14:40pt) arc (-77.14:-154.29:40pt) -- (-154.29:60pt) arc (-154.29:-77.14:60pt) -- cycle;
        \fill[fill=LimeGreen!40!white] (-180:40pt) arc (-180:-231.43:40pt) -- (-231.43:60pt) arc (-231.43:-180:60pt) -- cycle;

        \node[snode] (P1) at (90:50pt) {\footnotesize 0};
        \node[snode] (P2) at (64.29:50pt) {\footnotesize 0};
        \node[snode] (P3) at (38.57:50pt) {\footnotesize -1};
        \node[snode] (P4) at (12.86:50pt) {\footnotesize 2};
        \node[snode] (P5) at (-12.86:50pt) {\footnotesize -1};
        \node[snode] (P6) at (-38.57:50pt) {\footnotesize 0};
        \node[snode] (P7) at (-64.29:50pt) {\footnotesize 0};
        \node[snode] (P8) at (-90:50pt) {\footnotesize -10};
        \node[snode] (P9) at (-115.71:50pt) {\footnotesize 5};
        \node[snode] (P10) at (-141.43:50pt) {\footnotesize -10};
        \node[snode] (P11) at (-167.14:50pt) {\footnotesize 0};
        \node[snode] (P12) at (167.14:50pt) {\footnotesize $\frac{9}{10}$};
        \node[snode] (P13) at (141.43:50pt) {\footnotesize 1};
        \node[snode] (P14) at (115.71:50pt) {\footnotesize 0};

        \path (P1) edge (P2);
        \path (P2) edge (P3);
        \path (P3) edge (P4);
        \path (P4) edge (P5);
        \path (P5) edge (P6);
        \path (P6) edge (P7);
        \path (P7) edge (P8);
        \path (P8) edge (P9);
        \path (P9) edge (P10);
        \path (P10) edge (P11);
        \path (P11) edge (P12);
        \path (P12) edge (P13);
        \path (P13) edge (P14);
        \path (P14) edge (P1);

        \node at (90:36pt) {\rotatebox{180}{\scriptsize\color{red} $\bm{\leftarrow}$}};
        \node at (64.29:36pt) {\rotatebox{154.29}{\scriptsize\color{red} $\bm{\leftarrow}$}};
        \node at (38.57:36pt) {\rotatebox{128.57}{\scriptsize\color{red} $\bm{\leftarrow}$}};
        \node at (12.86:36pt) {\rotatebox{102.86}{\scriptsize\color{red} $\bm{\downarrow}$}};
        \node at (-12.86:36pt) {\rotatebox{77.14}{\scriptsize\color{red} $\bm{\rightarrow}$}};
        \node at (-38.57:36pt) {\rotatebox{51.43}{\scriptsize\color{red} $\bm{\rightarrow}$}};
        \node at (-64.29:36pt) {\rotatebox{25.71}{\scriptsize\color{red} $\bm{\rightarrow}$}};
        \node at (-90:36pt) {\rotatebox{0}{\scriptsize\color{red} $\bm{\rightarrow}$}};
        \node at (-115.71:36pt) {\rotatebox{-25.71}{\scriptsize\color{red} $\bm{\downarrow}$}};
        \node at (-141.43:36pt) {\rotatebox{-51.43}{\scriptsize\color{red} $\bm{\leftarrow}$}};
        \node at (-167.14:36pt) {\rotatebox{-77.14}{\scriptsize\color{red} $\bm{\leftarrow}$}};
        \node at (167.14:36pt) {\rotatebox{-102.86}{\scriptsize\color{red} $\bm{\leftarrow}$}};
        \node at (141.43:36pt) {\rotatebox{-128.57}{\scriptsize\color{red} $\bm{\leftarrow}$}};
        \node at (115.71:36pt) {\rotatebox{-154.29}{\scriptsize\color{red} $\bm{\leftarrow}$}};
      \end{tikzpicture}
      \caption{$\alpha = 0.01, \beta = 1.0$}\label{fig:PG-illustrated-policy:b}
  \end{subfigure}
  \begin{subfigure}[b]{0.32\linewidth}
      \centering
          \begin{tikzpicture}
        \fill[fill=Yellow!40!white] (51.43:40pt) arc (51.43:-25.71:40pt) -- (-25.71:60pt) arc (-25.71:51.43:60pt) -- cycle;
        \fill[fill=red!20!white] (-77.14:40pt) arc (-77.14:-154.29:40pt) -- (-154.29:60pt) arc (-154.29:-77.14:60pt) -- cycle;
        \fill[fill=LimeGreen!40!white] (-180:40pt) arc (-180:-231.43:40pt) -- (-231.43:60pt) arc (-231.43:-180:60pt) -- cycle;

        \node[snode] (P1) at (90:50pt) {\footnotesize 0};
        \node[snode] (P2) at (64.29:50pt) {\footnotesize 0};
        \node[snode] (P3) at (38.57:50pt) {\footnotesize -1};
        \node[snode] (P4) at (12.86:50pt) {\footnotesize 2};
        \node[snode] (P5) at (-12.86:50pt) {\footnotesize -1};
        \node[snode] (P6) at (-38.57:50pt) {\footnotesize 0};
        \node[snode] (P7) at (-64.29:50pt) {\footnotesize 0};
        \node[snode] (P8) at (-90:50pt) {\footnotesize -10};
        \node[snode] (P9) at (-115.71:50pt) {\footnotesize 5};
        \node[snode] (P10) at (-141.43:50pt) {\footnotesize -10};
        \node[snode] (P11) at (-167.14:50pt) {\footnotesize 0};
        \node[snode] (P12) at (167.14:50pt) {\footnotesize $\frac{9}{10}$};
        \node[snode] (P13) at (141.43:50pt) {\footnotesize 1};
        \node[snode] (P14) at (115.71:50pt) {\footnotesize 0};

        \path (P1) edge (P2);
        \path (P2) edge (P3);
        \path (P3) edge (P4);
        \path (P4) edge (P5);
        \path (P5) edge (P6);
        \path (P6) edge (P7);
        \path (P7) edge (P8);
        \path (P8) edge (P9);
        \path (P9) edge (P10);
        \path (P10) edge (P11);
        \path (P11) edge (P12);
        \path (P12) edge (P13);
        \path (P13) edge (P14);
        \path (P14) edge (P1);

        \node at (90:36pt) {\rotatebox{180}{\scriptsize\color{red} $\bm{\rightarrow}$}};
        \node at (64.29:36pt) {\rotatebox{154.29}{\scriptsize\color{red} $\bm{\rightarrow}$}};
        \node at (38.57:36pt) {\rotatebox{128.57}{\scriptsize\color{red} $\bm{\rightarrow}$}};
        \node at (12.86:36pt) {\rotatebox{102.86}{\scriptsize\color{red} $\bm{\rightarrow}$}};
        \node at (-12.86:36pt) {\rotatebox{77.14}{\scriptsize\color{red} $\bm{\rightarrow}$}};
        \node at (-38.57:36pt) {\rotatebox{51.43}{\scriptsize\color{red} $\bm{\rightarrow}$}};
        \node at (-64.29:36pt) {\rotatebox{25.71}{\scriptsize\color{red} $\bm{\rightarrow}$}};
        \node at (-90:36pt) {\rotatebox{0}{\scriptsize\color{red} $\bm{\rightarrow}$}};
        \node at (-115.71:36pt) {\rotatebox{-25.71}{\scriptsize\color{red} $\bm{\downarrow}$}};
        \node at (-141.43:36pt) {\rotatebox{-51.43}{\scriptsize\color{red} $\bm{\leftarrow}$}};
        \node at (-167.14:36pt) {\rotatebox{-77.14}{\scriptsize\color{red} $\bm{\leftarrow}$}};
        \node at (167.14:36pt) {\rotatebox{-102.86}{\scriptsize\color{red} $\bm{\leftarrow}$}};
        \node at (141.43:36pt) {\rotatebox{-128.57}{\scriptsize\color{red} $\bm{\downarrow}$}};
        \node at (115.71:36pt) {\rotatebox{-154.29}{\scriptsize\color{red} $\bm{\rightarrow}$}};
      \end{tikzpicture}
      \caption{$\alpha = 0.15, \beta = 1.0$}\label{fig:PG-illustrated-policy:c}
  \end{subfigure}
  \caption{Illustrated policies learned by the exact policy gradient algorithm in different settings.}\label{fig:PG-illustrated-policy}
\end{figure}

Similarly, to understand the role of $\alpha$, we compare Figure \ref{fig:PG-illustrated-policy:b} with \ref{fig:PG-illustrated-policy:c} --- for an agent with fixed moderate penalty magnitude $\beta$, when the noise in the model is small ($\alpha = 0.01$), it is still optimal for the agent to stay in the medium-risk zone for a balance of risks and rewards; nevertheless, if the agent is put in a noisier environment ($\alpha = 0.15$), then its optimal policy would be directly moving to the low-risk zone to avoid potential risks.

To further examine the robustness of the risk-sensitive policies generated by the exact policy gradient algorithm, we calculate their robustness values with respect to different perturbation magnitudes $\delta$, and plot them for comparison in Figure \ref{fig:simulation-PG-full-robust}. Here the \textit{risk-neutral} policy refers to the optimal policy of the standard risk-neutral MDP, {while the \textit{robust baseline} policy refers to the optimal policy of the RMDP with KL-rectangular ambiguity set $\mathcal{P}_{\delta}$ (as defined above)}. It can be observed that, when $\delta$ is large, the risk-sensitive policies outperform the risk-neutral policy in both settings. {Meanwhile, our algorithm generally exhibits a comparable level of robustness as compared to the robust baseline that directly optimizes over RMDPs; sometimes it is even more robust than the baseline, especially when the actual ambiguity set is significantly larger than the one assumed in training (see Figure \ref{fig:simulation-PG-full-robust:a} for the curve where $\delta \gg 0.3$)}. Moreover, policies generated from higher penalty magnitude $\beta$ tend to have lower robustness values when $\delta$ is small, but gradually become more robust as $\delta$ increases.
  
  \begin{figure}[ht]
    \centering\captionsetup{justification=centering}
    \begin{subfigure}[b]{0.4\linewidth}
      \centering
      \includegraphics[width=\linewidth]{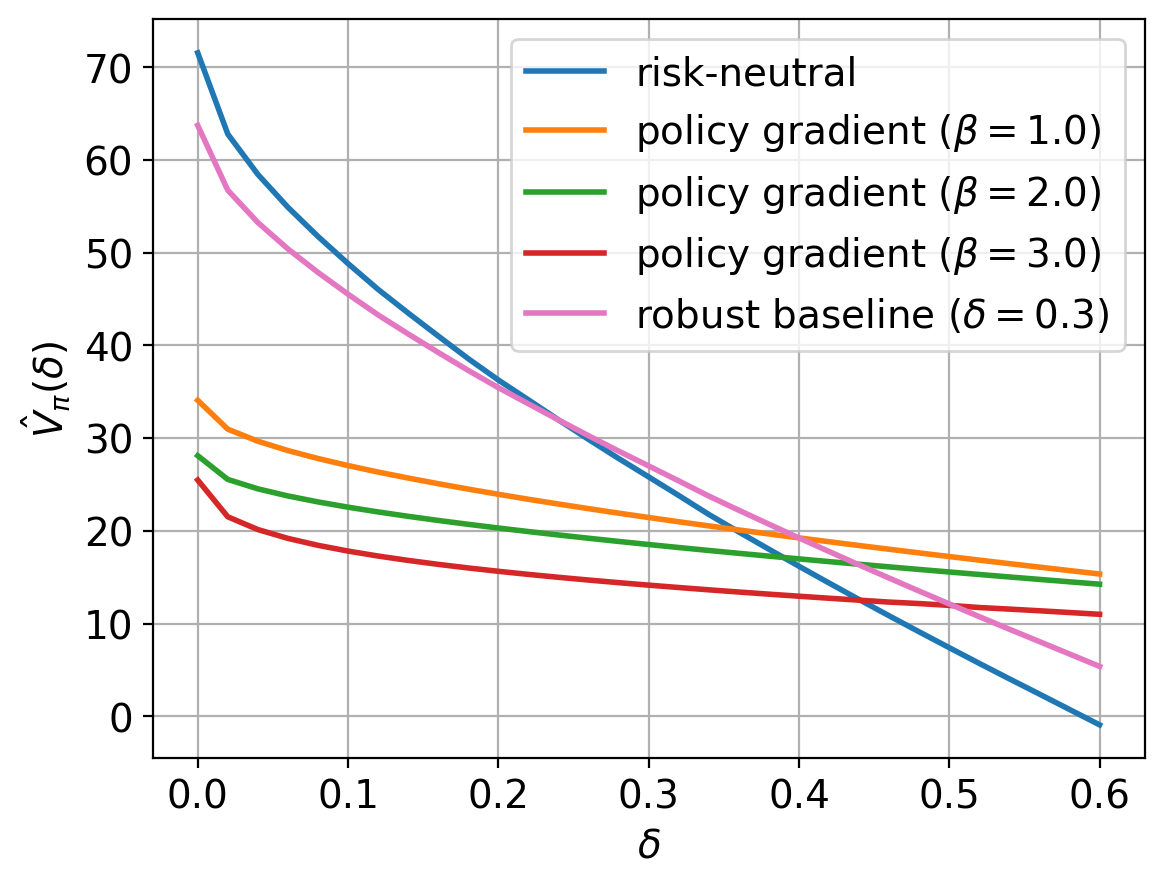}
      \subcaption{$\alpha = 0.01$.}\label{fig:simulation-PG-full-robust:a}
    \end{subfigure}\hspace{10pt}
    \begin{subfigure}[b]{0.4\linewidth}
      \centering
      \includegraphics[width=\linewidth]{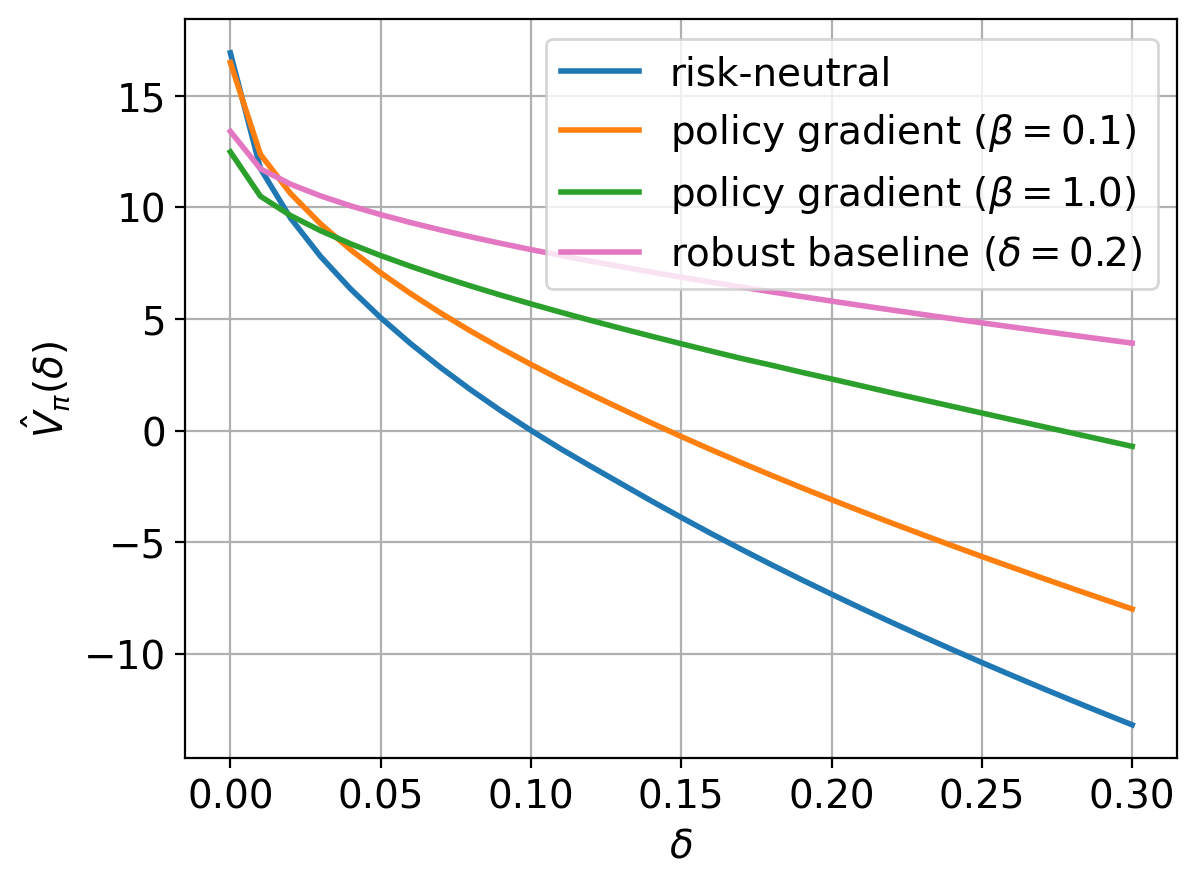}
      \subcaption{$\alpha = 0.15$.}
    \end{subfigure}
    \caption{Robustness values of the generated policies with respect to different $\delta$.\\${}^{\dagger}${\small \textit{Risk-neutral} policy refers to the optimal policy of the risk-neutral MDP.\\{\textit{Robust baseline} refers to the optimal policy of the RMDP with KL-rectangular ambiguity set $\mathcal{P}_{\delta}$.}}}\label{fig:simulation-PG-full-robust}
  \end{figure}

The above discussion reveals that risk-sensitive agents in face of the transition uncertainty do learn to avoid those states that could bring small instant rewards at the risk of potential future losses, and their risk-averse tendency increases when $\beta$ is set larger.
{These numerical evidence, in turn, further motivates and justifies our focus on KL-soft RMDPs. On the one hand, learning in the context of KL-soft RMDPs are generally more computationally tractable by relatively straightforward algorithm designs, without introducing complicated optimization techniques or additional assumptions. On the other hand, the optimal policy learned in the KL-soft RMDP context does exhibit robustness in the presence of model uncertainty. These two observations, in combination, show that the research into KL-soft RMDPs may offer an alternative, analytically tractable, and potentially more accessible approach to robust reinforcement learning while maintaining comparable robust behavior.
}

\paragraph{RFZI.} Now we proceed to examine the performance of the RFZI algorithm. The algorithm is tested in an 100-state environment. The hitting reward design of this environment is conceptually similar to the exemplary 14-state environment (see our code for more details). For practical implementation, the function family is selected as a 3-layer neural network, which may take any proper state-action representation as input (details deferred to Section \ref{sec:more-numeric-details}). Note that it is crucial to select good representations for efficient reinforcement learning, as the state itself might not provide sufficient information for learning (see e.g. \cite{du2019good} for more discussions). Further, since the minimizer in each iteration cannot be exactly calculated, instead we simply perform a batch of stochastic gradient descent updates for approximation, where for each update we sample a subset from the offline dataset. The practical algorithm is shown in Algorithm \ref{alg:RFQI-practical} below.

\begin{algorithm}[ht]
  \caption{The practical RFZI algorithm}\label{alg:RFQI-practical}
  \begin{algorithmic}[1]
    \State \textbf{Input:} Offline dataset $\cD = (s_i, a_i, r_i, s_i')_{i=1}^N$, function family $\cF = \lbrace Z(\cdot; \theta) \mid \theta \rbrace$, learning rate $\eta$, update rate $\tau$, number of batches $T_{\mathrm{batch}}$, batch size $N_{\mathrm{batch}}$.
    \State \textbf{Initialize:} $\theta_{\mathrm{current}}$, $\theta_{\mathrm{target}}$.
    \For{$k = 0, \dots, K-1$}
      \For{$t = 1, 2, \ldots, T_{\mathrm{batch}}$}
        \State Sample a batch of transitions $\lbrace (s_i, a_i , r_i, s'_i) \mid i \in [N_{\mathrm{batch}}] \rbrace$ from the dataset $\cD$.
        \State Perform gradient descent $\theta_{\mathrm{current}} \gets \theta_{\mathrm{current}} - \eta \nabla \widehat{\mathcal{L}}(\theta_{\mathrm{current}})$, where
        \begin{equation*}
          \widehat{\mathcal{L}}(\theta_{\mathrm{current}}) := \hcL(Z(\cdot; \theta_{\mathrm{current}}), Z(\cdot; \theta_{\mathrm{target}})).
        \end{equation*}
      \EndFor
      \State Update: $\theta_{\mathrm{target}} \gets (1-\tau) \theta_{\mathrm{target}} + \tau \theta_{\mathrm{current}}$, $\theta_{\mathrm{current}} \gets \theta_{\mathrm{target}}$.
    \EndFor 
    \State \textbf{Output:} $\pi_K = \argmax_a [r(s,a) - \gamma\beta^{-1}\log Z(s,a; \theta_{\mathrm{target}})]$.
  \end{algorithmic}
\end{algorithm}

  \begin{figure}[ht]
    \centering\captionsetup{justification=centering}
    \begin{subfigure}{\linewidth}
      \centering
      \includegraphics[width=0.32\linewidth]{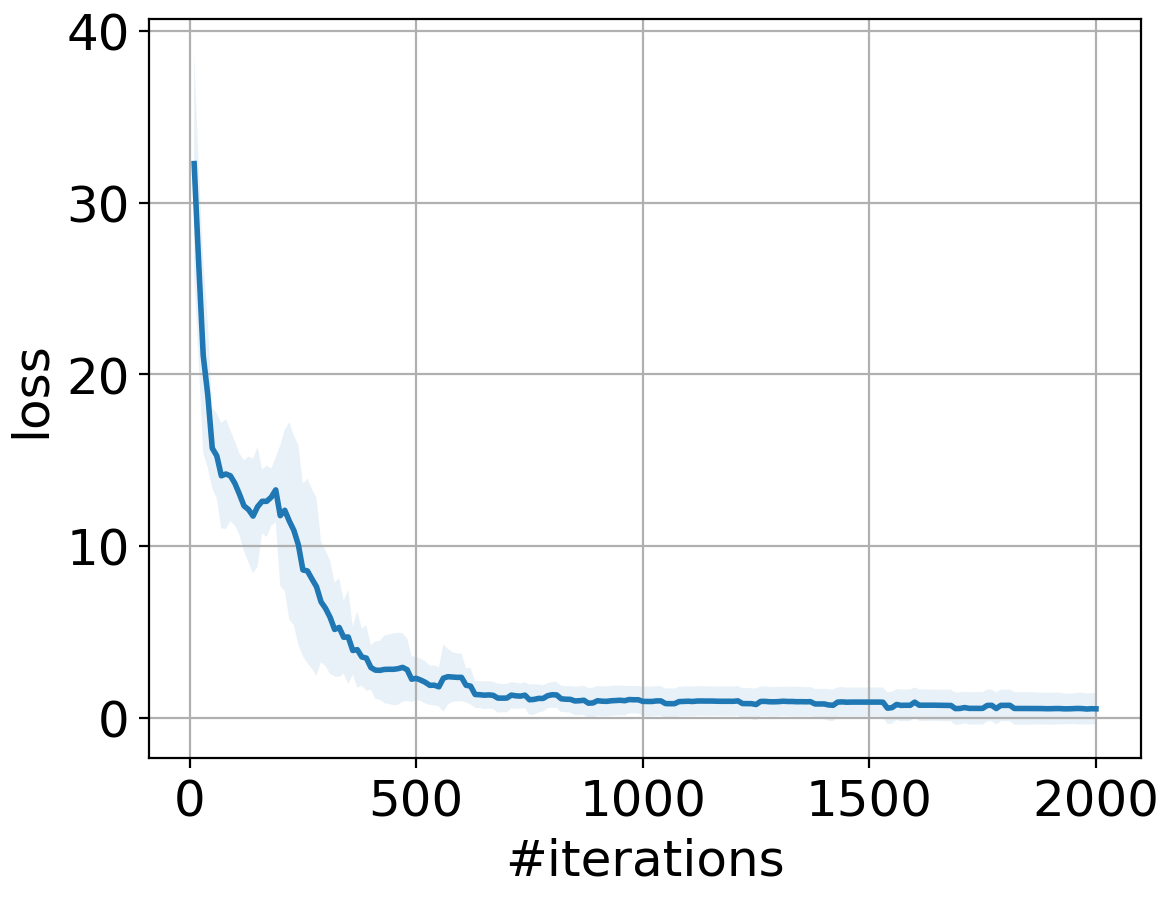}
      \includegraphics[width=0.32\linewidth]{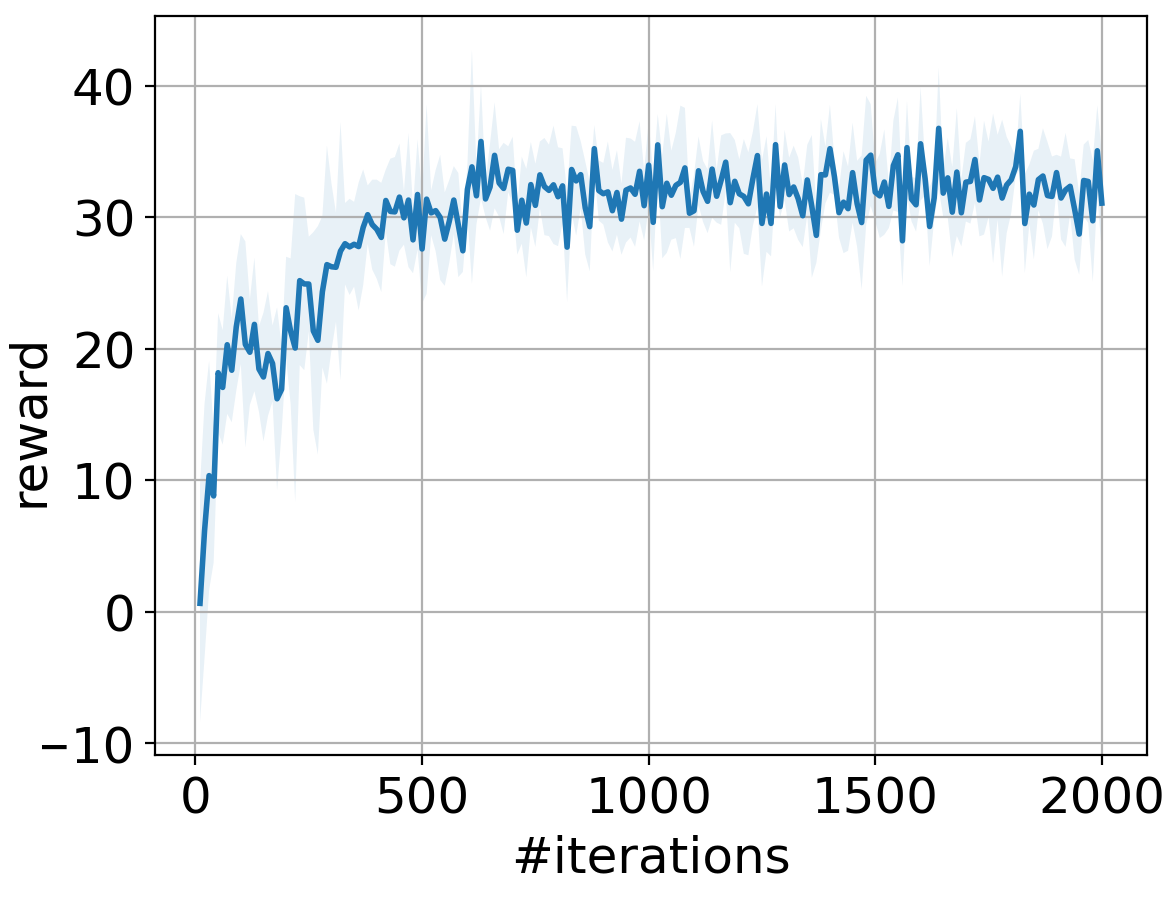}
      \includegraphics[width=0.32\linewidth]{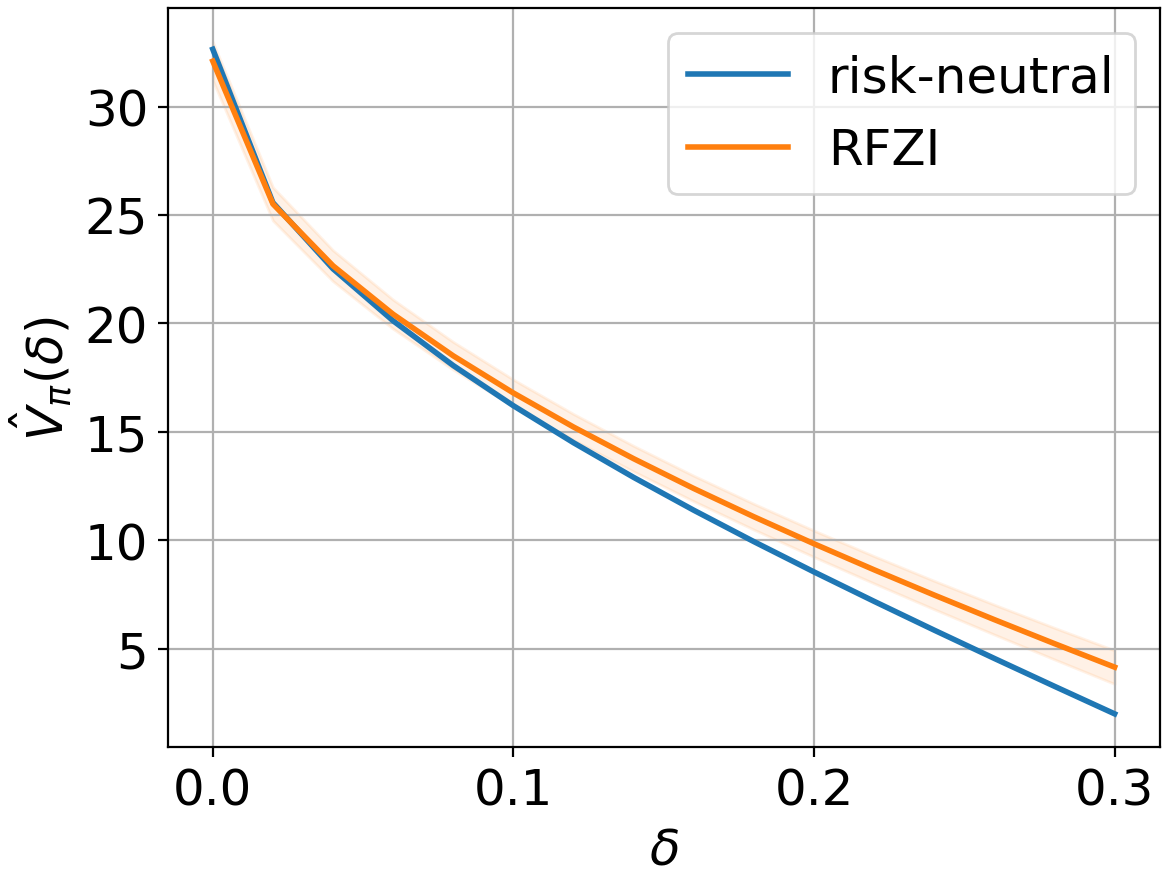}
      \subcaption{$\beta = 0.01$.}
    \end{subfigure}
    
    \begin{subfigure}[b]{\linewidth}
      \centering
      \includegraphics[width=0.32\linewidth]{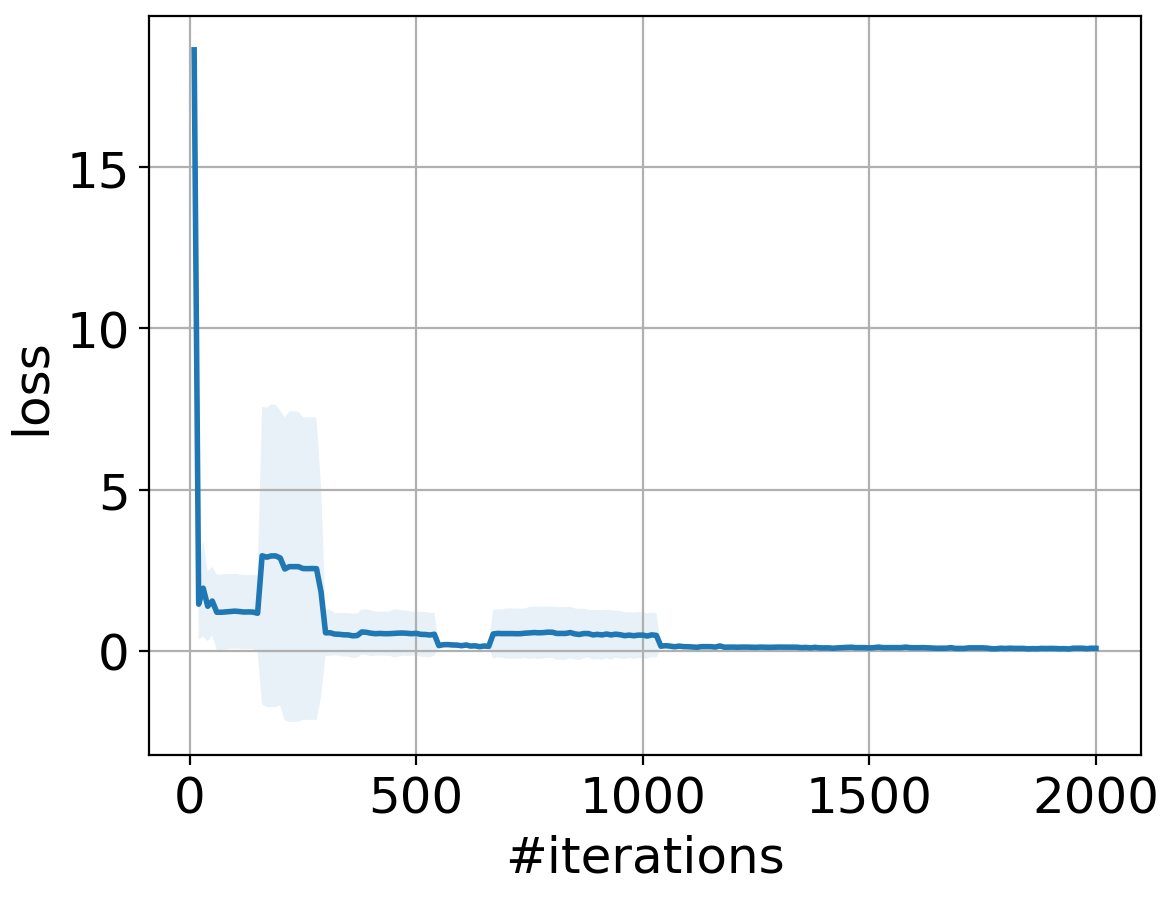}
      \includegraphics[width=0.32\linewidth]{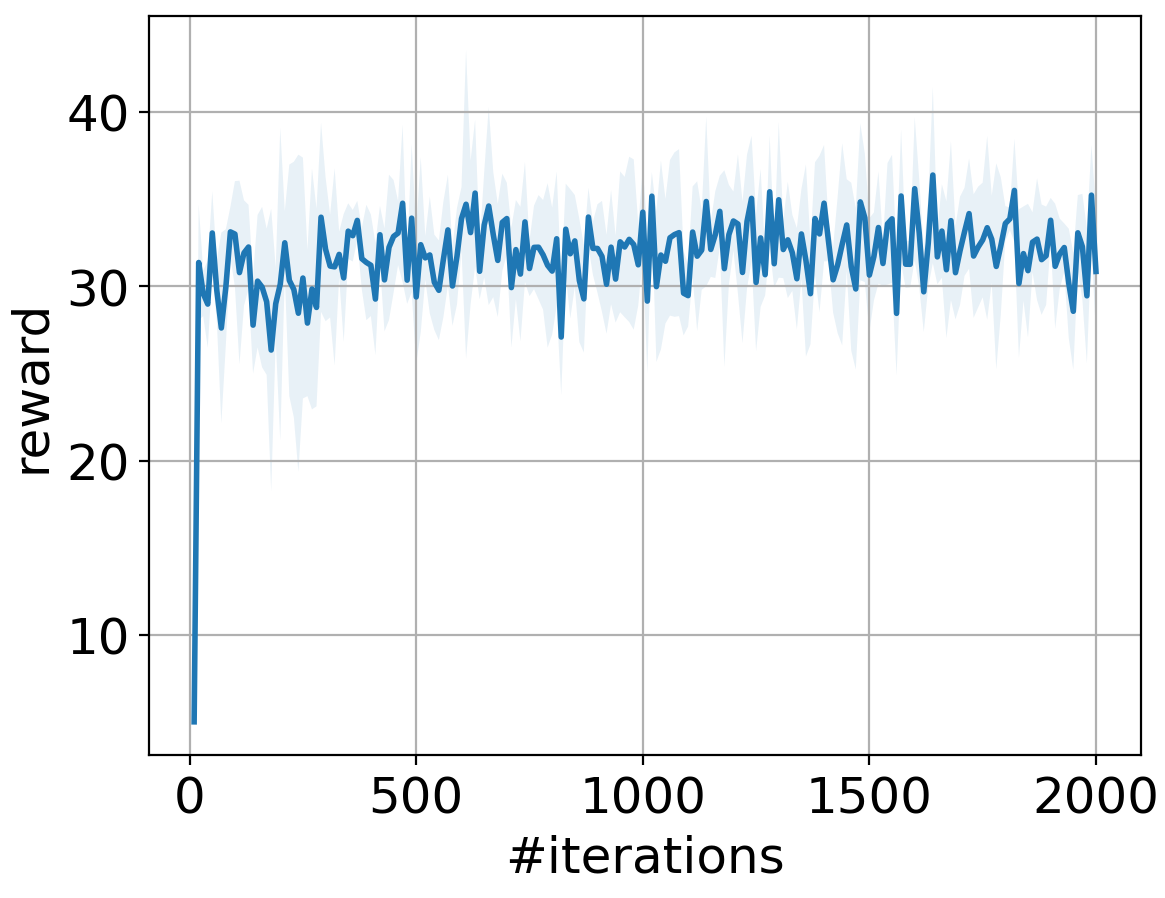}
      \includegraphics[width=0.32\linewidth]{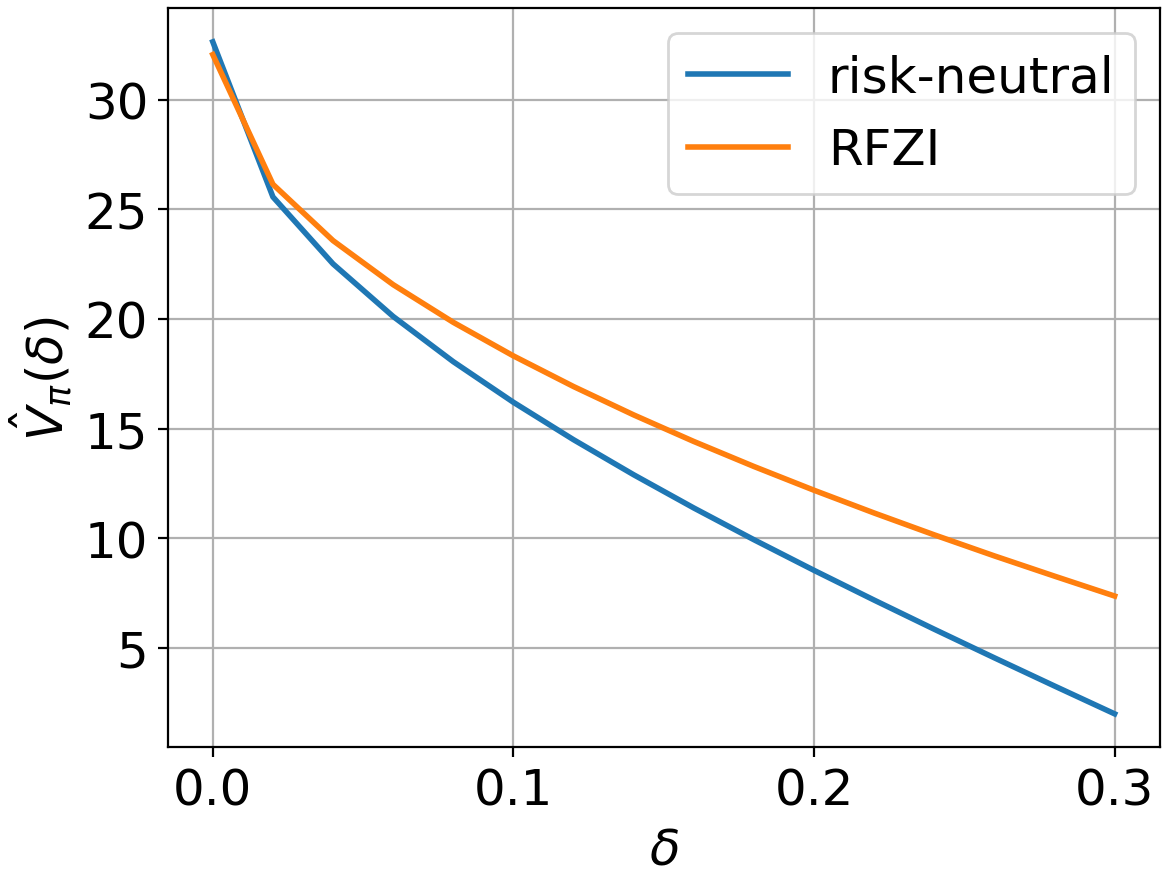}
      \subcaption{$\beta = 0.1$.}
    \end{subfigure}
    \caption{Simulation results for practical RFZI in the 100-state environment. (\textbf{Left:} optimality gap; \textbf{Middle:} average test reward; \textbf{Right:} robustness value.)}\label{fig:RFZI-multi-zone}
  \end{figure}

Simulation results for different penalty magnitudes are shown below in Figure \ref{fig:RFZI-multi-zone}. It can be observed that in both cases the optimality gap decays to close to 0 over time, and the average test reward also converges to oscillating around a stable value. To further examine the robustness of the policies, we compare their robustness values against the risk-neutral policy (i.e., the optimal policy of the standard risk-neutral MDP). The robustness value curve suggests that the performance of our RFZI policy is more robust than the risk-neutral policy in face of model uncertainty, and the advantage increases with larger penalty magnitude $\beta$.

However, we would also like to point out some limitations of the practical RFZI algorithm. Firstly, the training dynamics becomes unstable with larger $\beta$, reflected by slower convergence of the gradient descent updates in each iteration. Additionally, the training of the network is sensitive to other hyperparameters including learning rate, number of batches and batch size, which have to be carefully tuned for satisfactory performance. It remains future work to design algorithms that are more robust and more stable with regard to the choice of the hyperparameters.

\subsection{More Numerical Details}\label{sec:more-numeric-details}

Code for reproducing the simulation results can be found at \url{https://github.com/huyangsh/risk-sensitive-RL_ICRL-2024}.


\paragraph{Exact policy gradient.} Here we show the optimality gap (Figure \ref{fig:simulation-PG-full-loss}) and the policies (Table \ref{tab:simulation-PG-full-policy}) of a full range of experiments as specified therein. It can be verified that all converged policies are exactly the optimal policies obtained by solving the Bellman optimality equation for $Q^{\star}$. A closer look at the policies reveals a pattern that is similar to what we have observed before --- agents learn to move to lower-risk zones when the uncertainty in the environment (i.e., $\alpha$) is higher or when its penalty magnitude (i.e., $\beta$) is higher. 

  \begin{table}[H]
    \centering
    \caption{Policies found by the exact policy gradient algorithm.}\label{tab:simulation-PG-full-policy}
    
    \begin{small}
    \begin{tabular}{c|ccc}
        \specialrule{1.0pt}{0pt}{0pt}
        $\bm{\alpha}$ & $\bm{\beta}$ & \textbf{\#steps ($\bm{\eta = 0.1}$)} & \textbf{policy}$^{\bm{\dagger}}$ \\
        \hline
        \multirow{4}{*}{$0.01$} & $0.1$ & 226 & [-1, -1, 1, 1, 1, 1, 1, 1, 0, -1, -1, -1, -1, -1] \\
        & $1.0$ & 67 & [1, 1, 1, 0, -1, -1, -1, -1, 0, 1, 1, 1, 1, 1] \\
        & $2.0$ & 76 & [-1, -1, 1, 0, -1, -1, -1, -1, 0, 1, 1, 1, 0, -1] \\
        & $3.0$ & 192 & [-1, -1, -1, 0, -1, -1, -1, -1, 0, 1, 1, 1, 0, -1] \\
        \hline
        \multirow{3}{*}{$0.15$} & $0.01$ & 114 & [1, 1, 1, 0, -1, -1, -1, -1, 0, 1, 1, 1, 0, 1] \\
        & $0.1$ & 109 & [1, 1, 1, 0, -1, -1, -1, -1, 0, 1, 1, 1, 0, -1] \\
        & $1.0$ & 234 & [-1, -1, -1, -1, -1, -1, -1, -1, 0, 1, 1, 1, 0, -1] \\
        \specialrule{1.0pt}{0pt}{0pt}
    \end{tabular}
    
    ${}^{\dagger}$ Deterministic policies are represented by a vector in $\cA^{n}$, where an entry\\of the vector represents the action taken at the corresponding state.
    \end{small}
  \end{table}

  \begin{figure}[ht]
    \centering\captionsetup{justification=centering}
    \begin{subfigure}[b]{0.4\linewidth}
      \centering
      \includegraphics[width=\linewidth]{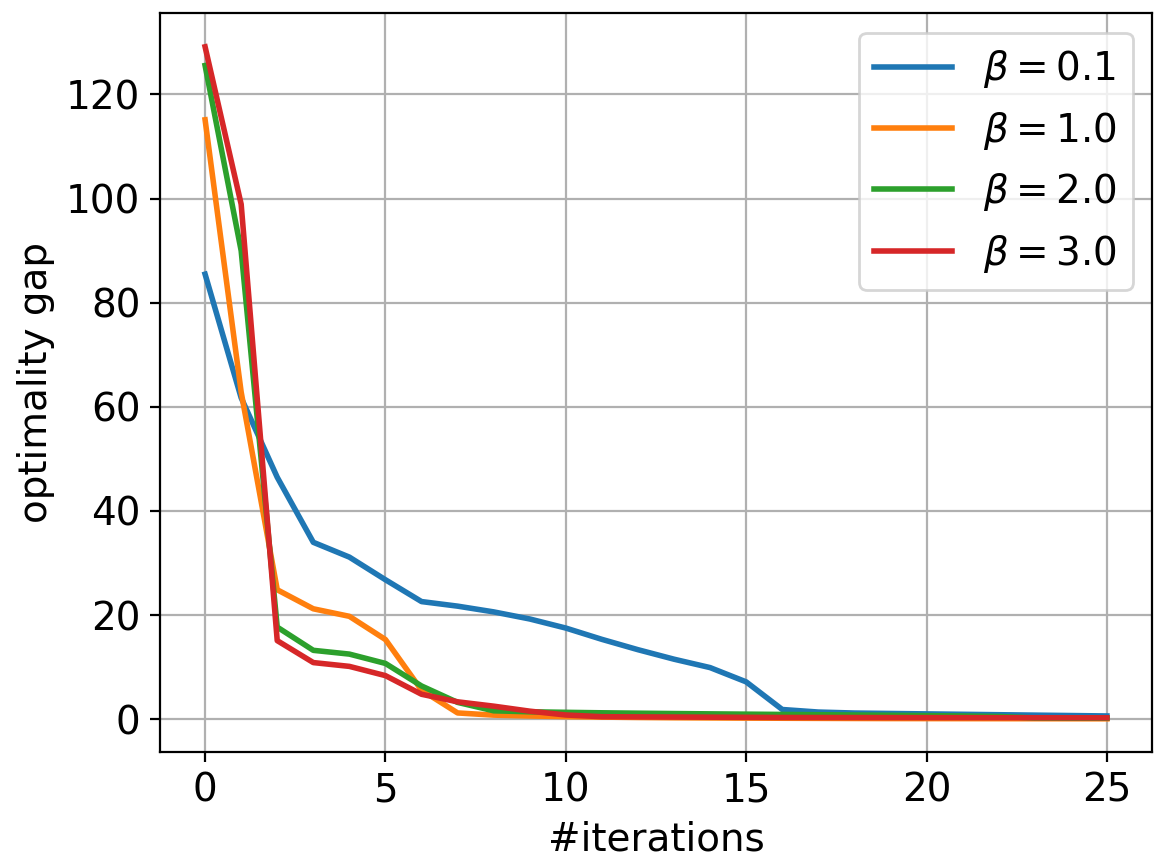}
      \subcaption{$\alpha = 0.01$.}
    \end{subfigure}\hspace{10pt}
    \begin{subfigure}[b]{0.4\linewidth}
      \centering
      \includegraphics[width=\linewidth]{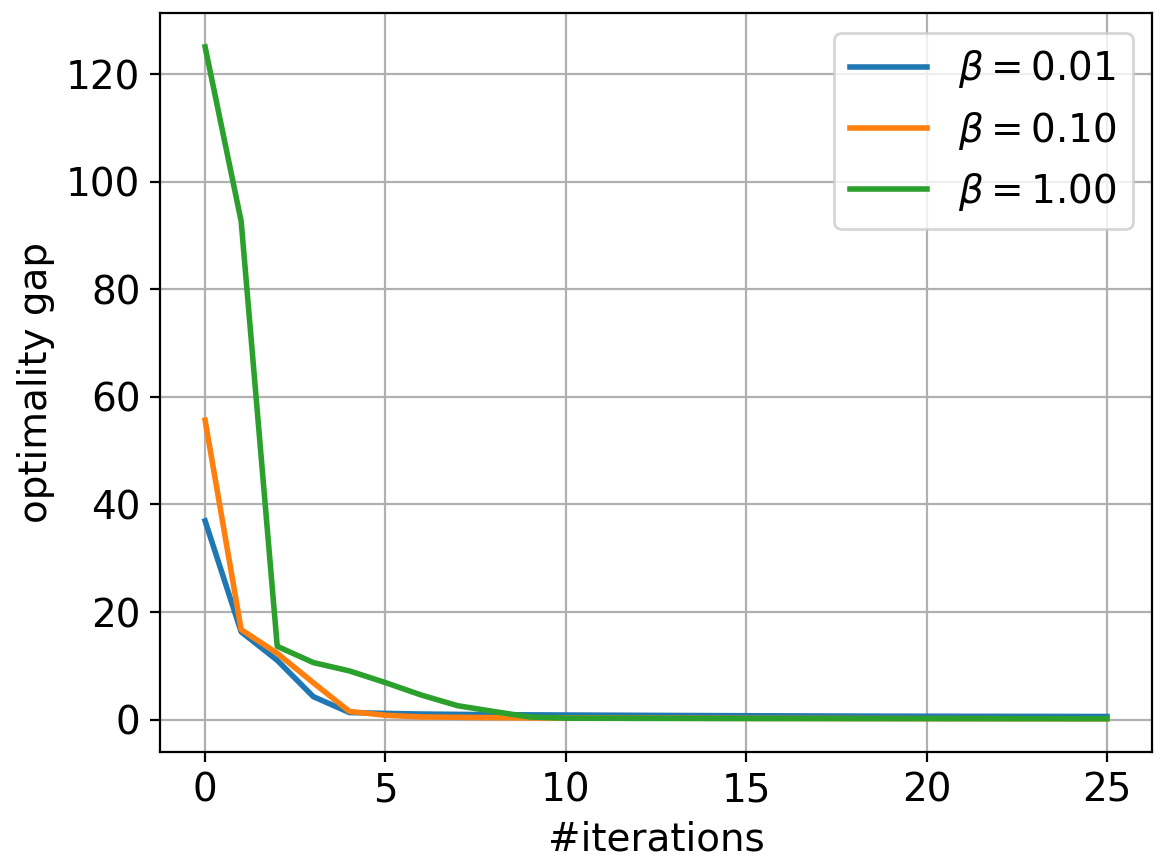}
      \subcaption{$\alpha = 0.15$.}
    \end{subfigure}
    \caption{Optimality gap curves for the exact policy gradient algorithm in different settings.}\label{fig:simulation-PG-full-loss}
  \end{figure}

\paragraph{RFZI.} In the implementation, we use sinusoidal embedding of states, i.e. 
\begin{equation*}
  \phi(s) = \left[ \sin \frac{2\pi}{N}, \sin \frac{4\pi}{N}, \ldots, \sin \frac{2N\pi}{N}, \cos \frac{2\pi}{N}, \cos \frac{4\pi}{N}, \ldots, \cos \frac{2N\pi}{N} \right],
\end{equation*}
which is similar to the embedding used in \cite{ren2023stochastic}. The Z-functions are approximated by a 3-layer network with a 256-dimensional first hidden layer and a 32-dimensional second hidden layer (both fully-connected and activated by ReLU). The output of the network is normalized by a sigmoid function to clamp the output in $(0,1)$ (in accordance with Assumption \ref{assump:positivity}).

\paragraph{Training details.} Training is performed on a workstation equipped with a 32-core CPU (Intel\textsuperscript{\textregistered} Xeon Platinum 8358, 2.60 GHz) and a NVIDIA\textsuperscript{\textregistered} A100 GPU. The average training time for a typical RFZI training scheme of 2000 iterations and 500 batches is about 5 hours.

\section{Relationship and Difference with Markov risk measures}\label{apdx:relationship-to-Markov-risk-measures}

In this section, we compare our definition of risk-sensitive MDP with the Markov risk measure. {Intuitively speaking, the Markov risk measure also takes the risk generated by the randomness of the policy into account whereas our definition treats the randomness of the policy in a risk-neutral manner and only considers risk from the uncertainty of the transition probability. This intuitively explains why the two notions are equivalent for deterministic policies but not for stochastic policies. }For clearness, we compare with the definition considered in \cite{huang2021convergence,tamar2015policy,tamar2017sequential}, where the reward $r(s)$ is only dependent on the state $s$ but not on action $a$.
The Markov risk measure for policy $\pi$ is defined as :
\begin{equation*}
    \Vmrm^\pi(s_0) = r(s_0) - \gamma\sigma\left(P^\pi_{s_0}, r(s_1) -\gamma\sigma\left(P^\pi_{s_1},r(s_2) - \gamma\sigma\left(P^\pi_{s_2}, r(s_3) - \cdots\right)\right)\right),
\end{equation*}
where $P_{s}^\pi$ is the transition probability defined by $P_{s}^\pi(s') = \sum_{a}\pi(a|s)P_{s,a}(s')$. Thus, if we define the Markov-risk-measure-Bellman operator $\Tmrm^\pi: \bR^{|\cS|\to\bR^{|\cS|}}$as: 
\begin{align*}
    [\Tmrm^\pi V](s):= r(s) - \gamma\sigma(P_s^\pi, V) = r(s) -\gamma\sigma(\sum_{a}\pi(a|s)P_{s,a}, V)
\end{align*}
then the Markov risk measure is the fixed point of the Bellman operator, i.e. $\Vmrm^\pi = \Tmrm^\pi \Vmrm^\pi$.

In contrast, the value function $\tV^\pi$ of the risk-sensitive MDP \eqref{eq:markov-risk-measure-pi} is the fixed point of the following risk-sensitive Bellman operator:
\begin{align*}
    [\cT^\pi V](s)&:= r(s) - \gamma\sum_a \pi(a|s)\sigma(\Psa, V)
\end{align*}

Note that the risk-sensitive Bellman operator $\cT^\pi$ is linear with respect to the policy $\pi$, whereas $\Tmrm$ can be potentially nonlinear w.r.t. $\pi$ as $\sigma$ is generally a nonlinear function. Thus for stochastic policies, $\Vmrm^\pi$ and $\tV^\pi$ are not equivalent. However, it is not hard to verify that when $\pi$ is a deterministic policy, $\cT^\pi$ and $\Tmrm^\pi$ are the same
\begin{align*}
    [\Tmrm^\pi V](s) = r(s) -\gamma\sigma(P_{s,\pi(s)}, V) = [\cT^\pi V](s).
\end{align*}
Thus, for deterministic policies, the value function $\tV^\pi$ and the Markov risk measure $\Vmrm^\pi$ are the same. {Additionally, when the risk-measure $\sigma$ is mixture quasiconcave (c.f. \cite{Delage2016DicesionMakingUU}), it can be shown that the optimal policy $\pi$ for the Markov risk measure can also be chosen as a deterministic policy; thus under this case the Markov risk measure and the risk-sensitive MDP obtain the same optimal value, i.e. $\Vmrm^\star = \tV^\star$.} {However, we would also like to emphasize that when adding policy regularization or that the risk measure $\sigma$ is not mixture quasiconcave (e.g. mean semi-deviation), the optimal policy might no longer be a deterministic policy. In this setting, the optimal policy and optimal value of the Markov risk measure and the risk-sensitive MDP might not be the same.}



\section{Proof of Lemma \ref{lemma:Bellman-eq-existence-uniqueness}}\label{apdx:proof-of-Bellman-eq-existence}
Given a Markovian policy $\pi$ and a function on the state space $\cS$, define the Bellman operator as:
\begin{align}
    [\cT^\pi V](s)&:= \sum_a \pi(a|s)\left(r(s,a) - \gamma \sigma(\Psa, V) \right) \label{eq:finite-bellman-2},
\end{align}
Further, define the optimal Bellman operator $\cT^\star$ as:
\begin{align}
    [\cT^\star V](s)&:=\max_a \left(r(s,a) - \gamma \sigma(\Psa, V) \right). \label{eq:finite-bellman-star}
\end{align}

\begin{lemma}\label{lemma:Bellman-V-contraction}
    \begin{align*}
        \|\cT^\pi(V'-V)\|_\infty\le \gamma\|V'-V\|_\infty,~~\|\cT^\star(V'-V)\|_\infty \le \gamma\|V'-V\|_\infty.
    \end{align*}
\begin{proof}
    \begin{align*}
        [\cT^\pi(V'-V)](s) &= \sum_a \pi(a|s)\left(r(s,a) - \gamma \sigma(\Psa, V') \right) - \sum_a \pi(a|s)\left(r(s,a) - \gamma \sigma(\Psa, V) \right)\\
        &= \gamma \left(\sigma(\Psa, V) -\sigma(\Psa, V') \right)\\
        &\le \gamma \left(\sigPsa(V' - \|V'-V\|_\infty) -\sigma(\Psa, V') \right)\quad \textup{(monotonicity)}\\
        &= \gamma \|V'-V\|_\infty \qquad \textup{(translation invariance)}.
    \end{align*}
    Using the same analysis we can also get,
    \begin{align*}
        [\cT^\pi(V-V')](s)&\le \gamma \|V'-V\|_\infty\\
        \Longrightarrow~~ \|\cT^\pi(V'-V)\|_\infty &\le \gamma \|V'-V\|_\infty.
    \end{align*}
    Similarly, for $\cT^\star$,
\begin{align*}
    [\cT^\star(V'-V)](s) &= \max_a \left(r(s,a) - \gamma \sigma(\Psa, V') \right) - \max_a \left(r(s,a) - \gamma \sigma(\Psa, V) \right)\\
        &= \gamma \max_a\left(\sigma(\Psa, V) -\sigma(\Psa, V') \right) \quad \textup{(}\max_x f(x) - \max g(x) \le \max_x (f(x)-g(x))\textup{)}\\
        &\le \gamma \max_a\left(\sigPsa(V' - \|V'-V\|_\infty) -\sigma(\Psa, V') \right)\quad \textup{(monotonicity)}\\
        &= \gamma \|V'-V\|_\infty \qquad \textup{(translation invariance)}.\\
        \Longrightarrow~~ [\cT^\star(V-V')](s) &\le \gamma \|V'-V\|_\infty.\\
        \Longrightarrow~~ \|\cT^\star(V'-V)\|_\infty &\le \gamma \|V'-V\|_\infty.
\end{align*}
\end{proof}
\end{lemma}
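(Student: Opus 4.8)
The plan is to reduce both inequalities to a single one-step Lipschitz estimate on the risk measure: for every state-action pair and any $V,V'\in\cV$, one has $\sigma(\Psa,V)-\sigma(\Psa,V')\le\norm{V'-V}_\infty$. (Throughout I read $\cT^\pi(V'-V)$ as shorthand for $\cT^\pi V'-\cT^\pi V$, as the author does.) This estimate is the only nontrivial ingredient, and crucially it uses just two of the three axioms of a convex risk measure — monotonicity and translation invariance — and never convexity. Concretely, setting $c:=\norm{V'-V}_\infty$, I would note the pointwise bound $V'-c\le V$, apply monotonicity (which is order-reversing) to obtain $\sigma(\Psa,V)\le\sigma(\Psa,V'-c)$, and then use translation invariance in the form $\sigma(\Psa,V'-c)=\sigma(\Psa,V')+c$. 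Chaining these gives $\sigma(\Psa,V)-\sigma(\Psa,V')\le c$.

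Granting this estimate, the bound for $\cT^\pi$ is immediate. Expanding $[\cT^\pi V'](s)-[\cT^\pi V](s)$, the reward terms $r(s,a)$ cancel and leave $\gamma\sum_a\pi(a|s)\big(\sigma(\Psa,V)-\sigma(\Psa,V')\big)$. Each summand is at most $\norm{V'-V}_\infty$ by the one-step estimate, and since $\pi(\cdot|s)$ is a probability distribution the whole expression is at most $\gamma\norm{V'-V}_\infty$. Swapping the roles of $V$ and $V'$ gives the matching reverse bound, so $\lvert[\cT^\pi V'](s)-[\cT^\pi V](s)\rvert\le\gamma\norm{V'-V}_\infty$; taking the supremum over $s$ finishes this case.

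For $\cT^\star$ the only additional tool is the elementary inequality $\max_a f(a)-\max_a g(a)\le\max_a\big(f(a)-g(a)\big)$. Applying it with $f(a)=r(s,a)-\gamma\sigma(\Psa,V')$ and $g(a)=r(s,a)-\gamma\sigma(\Psa,V)$ again cancels the rewards and produces $\gamma\max_a\big(\sigma(\Psa,V)-\sigma(\Psa,V')\big)$, which the one-step estimate bounds by $\gamma\norm{V'-V}_\infty$. Symmetry supplies the reverse direction, completing the argument.

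I do not anticipate a real obstacle: the content is entirely in recognizing that monotonicity plus translation invariance already force $\sigma(\Psa,\cdot)$ to be $1$-Lipschitz in the sup-norm, after which the discount $\gamma$ supplies the contraction. The single point that demands care is sign bookkeeping — the monotonicity axiom as stated is order-reversing ($V'\le V\Rightarrow\sigma(V)\le\sigma(V')$) and translation invariance carries a minus sign ($\sigma(V+m)=\sigma(V)-m$) — so one must make sure these two conventions line up when deriving the one-step estimate.
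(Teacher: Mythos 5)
Your proposal is correct and follows essentially the same route as the paper: both proofs reduce everything to the $1$-Lipschitz estimate $\sigma(\Psa,V)-\sigma(\Psa,V')\le\norm{V'-V}_\infty$ obtained from monotonicity applied to $V'-\norm{V'-V}_\infty\le V$ followed by translation invariance, and both handle $\cT^\star$ via the inequality $\max_a f(a)-\max_a g(a)\le\max_a(f(a)-g(a))$ together with a symmetry argument. Your write-up is in fact slightly cleaner than the paper's, since you keep the $\sum_a\pi(a|s)$ weighting explicit (the paper silently drops it) and you isolate the Lipschitz estimate as a reusable step, but the underlying argument is identical.
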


\begin{proof}[Proof of Lemma \ref{lemma:Bellman-eq-existence-uniqueness}]
Lemma \ref{lemma:Bellman-eq-existence-uniqueness} is an immediate corollary of Lemma \ref{lemma:Bellman-V-contraction}. Note that $\tV^\pi$ and $\tV^\star$ in \eqref{eq:markov-risk-measure-pi} and \eqref{eq:markov-risk-measure-star} is the fixed point solution of
\begin{align*}
   \tV^\pi = \cT^\pi \tV^\pi, \quad \tV^\star = \cT^\star \tV^\star.
\end{align*}
From Lemma \ref{lemma:bellman-contraction} and the contraction mapping theorem \cite{istratescu1981fixed}, the fixed point solution exists and is unique, which completes the proof.
\end{proof}
\section{Proof of Theorem \ref{theorem:dual-representation}}


\subsection{Finite horizon discounting case}
We first define the value functions and Bellman operators for the finite horizon case. For any policy $\pi$ (doesn't necessarily need to be stationary or Markovian), define the value function as:
\begin{align}
    \oV_{0:h}^\pi(s):=\min_{\{\widehat{P}_t\}_{t=0}^h}\bE_{s_t,a_t\sim \pi,\widehat{P}}\sum_{t=0}^h \gamma^t\left(r(s_t,a_t)+\gamma D(\hP{}_{t;s_t,a_t},P_{s_t,a_t})\right).\label{eq:def-V-pi-finite-horizon}
\end{align}
Define $\oV_{0:h}^\star(s)$ as
\begin{align}
    \oV_{0:h}^\star(s) := \max_\pi\min_{\{\widehat{P}_t\}_{t=0}^h}\bE_{s_t,a_t\sim \pi,\widehat{P}}\sum_{t=0}^h \gamma^t\left(r(s_t,a_t)+\gamma D(\hP{}_{t;s_t,a_t},P_{s_t,a_t})\right).\label{eq:def-V-star-finite-horizon}
\end{align}

\begin{lemma}\label{lemma:Bellman-eq-finite-horizon-star}
$\oV_{0:h}^\star$ is given by:
\begin{equation*}
    \oV_{0:h+1}^\star = \cT^\star \oV_{0:h}^\star, ~~(\oV_{0:-1}^\star:= 0)
\end{equation*}
where $\cT^\star$ is defined as in \eqref{eq:finite-bellman-star}.
Further, for all state $s$, the max-min solution of \eqref{eq:def-V-star-finite-horizon} is given by a same set of policies and probability transitions:
\begin{align*}
    \pi_{t|h}^\star(s) &= \argmax_a \left(r(s,a) - \gamma \sigma(\Psa,\oV_{0:h-t-1}^\star)\right),\\
    \hP\star_{t|h;s,a} &= \argmin_{\widehat{P}} D(\widehat{P}, \Psa) + \bE_{s'\sim \widehat{P}}\oV^\star_{0:h-t-1}(s').
\end{align*}
\end{lemma}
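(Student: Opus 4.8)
The plan is to prove both assertions—the recursion $\oV_{0:h+1}^\star = \cT^\star \oV_{0:h}^\star$ and the explicit form of the max-min solution—simultaneously by induction on the horizon $h$, using the convention $\oV_{0:-1}^\star := 0$ as the base case. Carrying them together is natural because the optimal first-step action and transition will turn out to be exactly the $\argmax$ and $\argmin$ that appear in $\cT^\star$, and matching the remaining-horizon index then reads off the forms of $\pi_{t|h}^\star$ and $\hP\star_{t|h;s,a}$ at every $t$. For the base case, $[\cT^\star 0](s) = \max_a(r(s,a) - \gamma\sigma(\Psa,0))$, and since $\sigma(\Psa,0) = \sup_{\widehat\mu}(-D(\widehat\mu,\Psa))$ by \eqref{eq:dual-representation-d-to-V}, this matches $\oV_{0:0}^\star(s) = \max_{a}(r(s,a) + \gamma\min_{\widehat P}D(\widehat P,\Psa))$ directly.

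The heart of the inductive step is a one-step dynamic-programming (principle of optimality) decomposition. Fixing $s_0 = s$, I would peel off the $t=0$ term of the sum in \eqref{eq:def-V-star-finite-horizon}, writing the objective as $r(s,a_0) + \gamma D(\hP{}_{0;s,a_0},P_{s,a_0})$ plus $\gamma$ times the discounted sum over $t=1,\dots,h+1$. The crucial structural facts are that the penalty $D(\hP{}_{t;s_t,a_t},P_{s_t,a_t})$ is separable across time and acts only through the current state-action pair (rectangularity), and that the reward/penalty at each step depends only on $(s_t,a_t)$ and the locally chosen transition. Using rectangularity to interchange the inner minimization over $\{\hP{}_t\}_{t\ge1}$ with the expectation over $a_0$, the first-step objective becomes affine in $\pi(\cdot|s)$, so the outer maximum is attained at a deterministic action and the max over policies collapses to a max over $a_0$. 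Conditioning on $(a_0,\hP{}_0,s_1)$ and applying the induction hypothesis statewise identifies the continuation value—after maximizing the continuation policy and minimizing the continuation transitions—with $\oV_{0:h}^\star(s_1)$, yielding
\begin{equation*}
  \oV_{0:h+1}^\star(s) = \max_{a}\left[ r(s,a) + \gamma \min_{\widehat P}\left( D(\widehat P, \Psa) + \bE_{s'\sim\widehat P} \oV_{0:h}^\star(s')\right)\right].
\end{equation*}

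With the recursion in hand I would invoke the dual representation \eqref{eq:dual-representation-d-to-V}: since $D(\cdot,\Psa)$ is the penalty function paired with $\sigma(\Psa,\cdot)$, the inner minimization equals $-\sigma(\Psa,\oV_{0:h}^\star)$, so $\oV_{0:h+1}^\star(s) = \max_a(r(s,a) - \gamma\sigma(\Psa,\oV_{0:h}^\star)) = [\cT^\star\oV_{0:h}^\star](s)$. Reading off the optimizers identifies the greedy action and the minimizing transition as optimal at the first step; since at time $t$ of the horizon-$(h{+}1)$ problem exactly $h-t$ steps remain after acting—i.e. the value function $\oV_{0:h-t-1}^\star$—this confirms the claimed forms for all $t$, and shows in particular that the optimum is attained within Markov (state-feedback) strategies.

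The main obstacle is the principle-of-optimality step, namely justifying that the continuation value after the first transition equals $\oV_{0:h}^\star(s_1)$ under the $\max_\pi\min_{\widehat P}$ (robust game) ordering, where both the policy and the adversarial transitions may a priori be history-dependent. I expect the clean route is to establish two matching inequalities: for the lower bound, deploy the greedy first action together with the inductively optimal continuation policy and verify it guarantees the value against every adversary; for the upper bound, fix an arbitrary policy and let the adversary pick the minimizing first transition followed by the worst-case continuation. Both inequalities rest on the separability of $D$ and on the induction hypothesis applied statewise at $s_1$, and their combination delivers the exact recursion.
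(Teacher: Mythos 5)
Your proposal is correct and takes essentially the same route as the paper's proof: induction on the horizon, a lower bound obtained by playing the inductively optimal continuation policy, an upper bound obtained by letting the adversary respond to an arbitrary policy (the paper phrases this step as the max-min inequality $\max\min \le \min\max$ before applying the induction hypothesis), and finally the dual representation theorem to convert $\min_{\widehat{P}}\bigl(D(\widehat{P},\Psa) + \bE_{s'\sim\widehat{P}}V(s')\bigr)$ into $-\sigma(\Psa,V)$ and read off the optimizers with the one-step time shift. The only nit is an off-by-one in your closing sentence: at time $t$ of the horizon-$(h+1)$ problem the continuation value is $\oV_{0:h-t}^\star$ rather than $\oV_{0:h-t-1}^\star$ (the latter is the correct index for horizon $h$), which is exactly what the lemma's claimed form $\pi_{t|h+1}^\star(s) = \argmax_a \bigl(r(s,a) - \gamma\sigma(\Psa,\oV_{0:h-t}^\star)\bigr)$ requires.
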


\begin{proof}
    We prove by induction. The statements are trivial for $h=0$. Assume that the statements are true for $0\le t\le h$, then for $h+1$, from the definition of $\oV_h^\star$'s we have that
    \begin{align*}
        &\oV_{0:h+1}^\star(s_0) = \max_{\pi_0}\max_{\{\pi_t\}_{t=1}^{h+1}}\min_{\widehat{P}_0}\min_{\{\widehat{P}_t\}} \bE_{\pi, \widehat{P}}\sum_{t=0}^{h+1} \gamma^t\left(r(s_t,a_t)+\gamma D(\hP{}_{t;s_t,a_t},P_{s_t,a_t})\right)\\
        &\le  \max_{\pi_0}\min_{\widehat{P}_0}\max_{\{\pi_t\}_{t=1}^{h+1}}\min_{\{\widehat{P}_t\}} \bE_{\pi, \widehat{P}}\sum_{t=0}^{h+1} \gamma^t\left(r(s_t,a_t)+\gamma D(\hP{}_{t;s_t,a_t},P_{s_t,a_t})\right) ~~(\max_x\min_y f(x,y)\le \min_y\max_x f(x,y))\\
        &= \max_{\pi_0}\min_{\widehat{P}_0}\bE_{a_0, s_1\sim \pi_0, \widehat{P}_0}\! \left[r(s_0,a_0)\!+\!\gamma D(\hP{}_{0;s_0,a_0},P_{s_0,a_0}) \!+\! \left[\!\max_{\{\pi_t\}_{t=1}^{h+1}}\min_{\{\widehat{P}_t\}} \bE_{\pi, \widehat{P}}\sum_{t=1}^{h+1} \!\gamma^t\!\left(\!r(s_t,a_t)+\gamma D(\hP{}_{t;s_t,a_t},P_{s_t,a_t})\!\right)\right]\right]\\
         &= \max_{\pi_0}\min_{\widehat{P}_0}\bE_{a_0, s_1\sim \pi_0, \widehat{P}_0} \left[r(s_0,a_0)+\gamma D(\hP{}_{0;s_0,a_0},P_{s_0,a_0}) + \gamma \oV_{0:h}^\star(s_1)\right].
    \end{align*}
    Further, from the statement that $\pi_{t|h}^\star, \hP\star_{t|h;s,a}$ solves the min-max problem \eqref{eq:def-V-star-finite-horizon}, we have
\begin{align*}
    &\quad \oV_{0:h+1}^\star(s_0)= \max_{\pi_0}\max_{\{\pi_t\}_{t=1}^{h+1}}\min_{\widehat{P}_0}\min_{\{\widehat{P}_t\}} \bE_{\pi, \widehat{P}}\sum_{t=0}^{h+1} \gamma^t\left(r(s_t,a_t)+\gamma D(\hP{}_{t;s_t,a_t},P_{s_t,a_t})\right)\\
    &\ge \max_{\pi_0}\min_{\widehat{P}_0}\min_{\{\widehat{P}_t\}} \bE_{\pi_0, \{\pi_{t|h+1}^\star\}_{t=1}^{h+1}, \widehat{P}}\sum_{t=0}^{h+1} \gamma^t\left(r(s_t,a_t)+\gamma D(\hP{}_{t;s_t,a_t},P_{s_t,a_t})\right)\\
    &= \max_{\pi_0}\min_{\widehat{P}_0}\bE_{a_0, s_1\sim \pi_0, \widehat{P}_0} \Bigg[r(s_0,a_0)+\gamma D(\hP{}_{0;s_0,a_0},P_{s_0,a_0}) \\
    &\quad + \left. \left[\min_{\{\widehat{P}_t\}_{t=1}^{h+1}} \bE_{\{\pi_{t|h+1}^\star\}_{t=1}^{h+1}, \widehat{P}}\sum_{t=1}^{h+1} \gamma^t\left(r(s_t,a_t)+\gamma D(\hP{}_{t;s_t,a_t},P_{s_t,a_t})\right)\right]\right]\\
     &= \max_{\pi_0}\min_{\widehat{P}_0}\bE_{a_0, s_1\sim \pi_0, \widehat{P}_0}\left[ r(s_0,a_0)\!+\!\gamma D(\hP{}_{0;s_0,a_0},P_{s_0,a_0}) \!+\! \bE_{\{\pi_{t|h+1}^\star\}_{t=1}^{h+1}, \hP\star_{t|h+1}}\!\sum_{t=1}^{h+1}\! \gamma^t\!\left(\!r(s_t,a_t)\!+\!\gamma D(\hP{}_{t;s_t,a_t},P_{s_t,a_t})\!\right)\right]\\
     &= \max_{\pi_0}\min_{\widehat{P}_0}\bE_{a_0, s_1\sim \pi_0, \widehat{P}_0} \left[r(s_0,a_0)+\gamma D(\hP{}_{0;s_0,a_0},P_{s_0,a_0}) + \gamma \oV_{0:h}^\star(s_1)\right]~~(\pi^\star_{t+1|h+1} = \pi^\star_{t|h}, \hP\star_{t+1|h+1} = \hP\star_{t|h}).
\end{align*}
 Thus, we may conclude that 
 \begin{align*}
     \oV^\star_{0:h+1}(s_0) &= \max_{a_0}\min_{\widehat{P}_0} r(s_0,a_0)+\gamma\left(D(\hP{}_{0;s_0,a_0},P_{s_0,a_0}) + \bE_{s_1\sim \widehat{P}_0} \oV_{0:h}^\star(s_1)\right)\\
     &=\max_{a_0} r(s_0,a_0) - \gamma \sigma(P_{s_0,a_0},\oV^\star_{0:h}) ~~\quad \textup{(dual representation theorem)}\\
     &= \cT^\star \oV_{0:h}^\star,
 \end{align*}
 and that the max-min policies and probability transitions can be taken as:
\begin{align*}
    \pi_{0|h+1}^\star(s) &= \argmax_{a} r(s,a) - \gamma \sigma(\Psa,\oV^\star_{0:h})\\
    \pi_{t|h+1}^\star(s) = \pi_{t-1|h}^\star(s) &= \argmax_{a} r(s,a) - \gamma \sigma(\Psa,\oV^\star_{0:h-t}), \quad t\ge 1\\
    \hP\star_{0|h+1;s,a} &= \argmin_{\widehat{P}} D(\widehat{P}, \Psa) + \bE_{s'\sim \widehat{P}}\oV^\star_{0:h}(s')\\
    \hP\star_{t|h+1;s,a} &= \hP\star_{t-1|h;s,a} = \argmin_{\widehat{P}} D(\widehat{P}, \Psa) + \bE_{s'\sim \widehat{P}}\oV^\star_{0:h-t}(s'),\quad t\ge 1.
\end{align*}
The above arguments complete the proof.
\end{proof}

\begin{lemma}\label{lemma:Bellman-eq-finite-horizon-pi}
    For any stationary and Markovian policy $\pi$, we have that
    \begin{equation*}
        \oV_{0:h+1}^\pi = \cT^\pi \oV_{0:h}^\pi,
    \end{equation*}
    where $\cT^\pi$ is defined as in \eqref{eq:finite-bellman-2}.
    Further, for all state $s$, minimal solution of \eqref{eq:def-V-pi-finite-horizon} is given by the same set of probability transitions:
\begin{align*}
    \hP\pi_{t|h;s,a} = \argmin_{\widehat{P}} D(\widehat{P}, \Psa) + \bE_{s'\sim \widehat{P}}\oV^\pi_{0:h-t-1}(s').
\end{align*}
\end{lemma}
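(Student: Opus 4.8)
The plan is to mirror the induction used in the proof of Lemma~\ref{lemma:Bellman-eq-finite-horizon-star}, but the argument is strictly simpler because the policy $\pi$ is now fixed, so no minimax interchange is needed; only the minimization over the transition sequence $\{\widehat P_t\}$ must be decomposed. I would induct on $h$, with the base case $h=0$ being immediate from the definition \eqref{eq:def-V-pi-finite-horizon} together with the dual representation \eqref{eq:dual-representation-V-to-d}: the single-step value is $\min_{\widehat P_0}\bE_{a_0\sim\pi(\cdot|s)}[r(s,a_0)+\gamma D(\widehat P_{0;s,a_0},P_{s,a_0})]=\sum_a\pi(a|s)(r(s,a)-\gamma\sigma(P_{s,a},0))=[\cT^\pi\oV_{0:-1}^\pi](s)$, using $\oV_{0:-1}^\pi:=0$.

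For the inductive step, suppose the claim holds through horizon $h$. Starting from \eqref{eq:def-V-pi-finite-horizon} for $\oV_{0:h+1}^\pi(s_0)$, I would peel off the $t=0$ term and write the objective as the first-stage reward-plus-penalty plus a $\gamma$-discounted continuation. The crucial step is to interchange the minimization over the tail $\{\widehat P_t\}_{t\ge 1}$ with the expectation over $s_1$: since the infimum is taken over all (possibly history-dependent, non-stationary) transition models and these may be chosen independently for each realized state $s_1$, the interchangeability principle lets me push the tail minimization inside $\bE_{s_1}$, where it becomes exactly $\oV_{0:h}^\pi(s_1)$ by the induction hypothesis. This yields
\begin{equation*}
\oV_{0:h+1}^\pi(s_0) = \min_{\widehat P_0}\bE_{a_0\sim\pi(\cdot|s_0)}\left[r(s_0,a_0) + \gamma D(\widehat P_{0;s_0,a_0}, P_{s_0,a_0}) + \gamma\bE_{s_1\sim\widehat P_{0;s_0,a_0}}\oV_{0:h}^\pi(s_1)\right].
\end{equation*}

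Next I would use the $(s,a)$-rectangularity of the uncertainty — the first-stage model $\widehat P_{0;s_0,a_0}$ can be optimized separately for each action $a_0$ — to move the minimization inside the (fixed) average over $a_0\sim\pi(\cdot|s_0)$, and then apply the dual representation theorem in the form $\min_{\widehat P}(D(\widehat P, P_{s,a}) + \bE_{s'\sim\widehat P}V(s')) = -\sigma(P_{s,a}, V)$. This collapses the bracket to $r(s_0,a_0) - \gamma\sigma(P_{s_0,a_0}, \oV_{0:h}^\pi)$, giving $\oV_{0:h+1}^\pi = \cT^\pi\oV_{0:h}^\pi$, and simultaneously identifies the minimizing first-stage transition as $\hP\pi_{0|h+1;s,a} = \argmin_{\widehat P}(D(\widehat P, P_{s,a}) + \bE_{s'\sim\widehat P}\oV_{0:h}^\pi(s'))$, which is the claimed formula specialized to $t=0$ (note $\oV^\pi_{0:(h+1)-0-1}=\oV^\pi_{0:h}$). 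The minimizers for $t\ge 1$ then follow from the shift-invariance of the recursion, exactly as in Lemma~\ref{lemma:Bellman-eq-finite-horizon-star}.

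The main obstacle I anticipate is rigorously justifying the interchange of the tail minimization with the expectation over $s_1$, i.e.\ the interchangeability principle, since the transition models are chosen over a whole sequence and could a priori depend on the history. The justification rests on the fact that the optimization decouples across states (each $\widehat P_{t;s,a}$ is a free variable and enters the objective only through the subtree rooted at $(s_t,a_t)$), so choosing the tail models pointwise-optimally for every $s_1$ attains the sequence-level infimum; everything else (moving the min through the fixed action average via rectangularity, and invoking the dual representation) is routine given the results already established.
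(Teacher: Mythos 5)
Your proposal is correct and follows essentially the same route as the paper's proof: induction on $h$, peeling off the first stage, interchanging the tail minimization with the expectation over $(a_0,s_1)$ to invoke the induction hypothesis, moving the first-stage minimization inside the fixed action average, and collapsing it via the dual representation $\min_{\widehat P}\bigl(D(\widehat P,P_{s,a})+\bE_{s'\sim\widehat P}V(s')\bigr)=-\sigma(P_{s,a},V)$, with the $t\ge 1$ minimizers obtained by the shift structure. Your explicit justification of the min--expectation interchange (pointwise optimization of the decoupled tail models) is in fact more careful than the paper, which performs that step without comment.
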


\begin{proof}
    Our proof is largely similar to Lemma \ref{lemma:Bellman-eq-finite-horizon-star} and is again by induction. The statements are trivial for $h=0$. Assume that the statements are true for $0\le t\le h$, then for $h+1$, from the definition of $V_h^\pi$'s we have that
    \begin{align*}
        &\oV_{h+1}^{\pi} (s_0) = \min_{\widehat{P}_0}\min_{\{\widehat{P}_t\}} \bE_{\pi, \widehat{P}}\sum_{t=0}^{h+1} \gamma^t\left(r(s_t,a_t)+\gamma D(\hP{}_{t;s_t,a_t},P_{s_t,a_t})\right)\\
        &= \min_{\widehat{P}_0} \min_{\{\widehat{P}_t\}} \bE_{\pi_0, \widehat{P}_0} \left[ r(s_0,a_0)+\gamma D(\hP{}_{0;s_0,a_0},P_{s_0,a_0}) + \bE_{\pi_{1:h+1}, \widehat{P}_{1:h+1}} \sum_{t=1}^{h+1} \gamma^t\left(r(s_t,a_t)+\gamma D(\hP{}_{t;s_t,a_t},P_{s_t,a_t})\right) \right] \\
        &= \min_{\widehat{P}_0} \bE_{\pi_0, \widehat{P}_0} \left[ r(s_0,a_0)+\gamma D(\hP{}_{0;s_0,a_0},P_{s_0,a_0}) + \min_{\{\widehat{P}_t\}} \bE_{\pi_{1:h+1}, \widehat{P}_{1:h+1}} \sum_{t=1}^{h+1} \gamma^t\left(r(s_t,a_t)+\gamma D(\hP{}_{t;s_t,a_t},P_{s_t,a_t})\right) \right]\\ 
        &= \min_{\widehat{P}_0}\bE_{a_0, s_1\sim \pi_0, \widehat{P}_0} \left(r(s_0,a_0)+\gamma D(\hP{}_{0;s_0,a_0},P_{s_0,a_0}) + \gamma \oV_{0:h}^{\pi}(s_1)\right)\\
        &= \bE_{a_0 \sim \pi_0} \left( r(s_0,a_0)+\gamma\min_{\widehat P_0}\left(D(\hP{}_{0;s_0,a_0},P_{s_0,a_0}) + \bE_{s_1\sim\widehat{P}_{0;s_0,a_0}} \oV_{0:h}^\pi(s_1)\right) \right)\\
        &= \bE_{a_0 \sim \pi_0} \left( r(s_0,a_0) -\gamma \sigma(P_{s_0,a_0},\oV_{0:h}^\pi)\right).
    \end{align*}
    Here the last step follows from dual representation theorem. Further,  the minimal probability transitions can be taken as:
\begin{align*}
    \hP\pi_{0|h+1;s,a} &= \argmin_{\widehat{P}} D(\widehat{P}, \Psa) + \bE_{s'\sim \widehat{P}}\oV^\pi_{0:h}(s')\\
    \hP\pi_{t|h+1;s,a} &= \hP\pi_{t-1|h;s,a} = \argmin_{\widehat{P}} D(\widehat{P}, \Psa) + \bE_{s'\sim \widehat{P}}\oV^\pi_{0:h-t}(s'),\quad t\ge 1.
\end{align*}
The above arguments complete the proof.
\end{proof}

\subsection{Proof of Theorem \ref{theorem:dual-representation} (infinite horizon case)}
\begin{proof}
We first verify that for $\oV^\star, \oV^\pi$ defined in \eqref{eq:min-max-value-function-star}, \eqref{eq:min-max-value-function-pi},
\begin{align*}
    \lim_{h\to+\infty}\oV_{0:h}^\star =\oV^\star, ~~ \lim_{h\to+\infty}\oV_{0:h}^\pi = \oV^\pi.
\end{align*}
{
From the definition of $\oV^\pi$, we have that
\begin{align*}
    \oV^\pi(s) &= \inf_{\{\widehat P_t\}_{t\ge0}}\bE_{s_t,a_t\sim \pi,\widehat P} \left[\sum_{t=0}^{+\infty} \gamma^t \left(r(s_t,a_t) + \gamma D(\widehat{P}_{t;s_t,a_t}, P_{s_t, a_t}) \right)\Big|s_0 = s\right]\\
    &\le  \inf_{\{\widehat P_t\}_{t=0}^h}\bE_{s_t,a_t\sim \pi,\widehat P} \left[\sum_{t=0}^{h} \gamma^t \left(r(s_t,a_t) + \gamma D(\widehat{P}_{t;s_t,a_t}, P_{s_t, a_t}) \right)\Big|s_0 = s\right]\\
    &\qquad\qquad + \bE_{s_t,a_t\sim \pi, P} \left[\sum_{t=h}^{+\infty} \gamma^t \left(r(s_t,a_t) + \gamma D(P_{s_t,a_t}, P_{s_t, a_t}) \right)\Big|s_0 = s\right]\\
    &= \oV_{0:h}^\pi(s) + \bE_{s_t,a_t\sim \pi, P} \left[\sum_{t=h}^{+\infty} \gamma^t \left(r(s_t,a_t)\right)\Big|s_0 = s\right]\\
    &\le \oV_{0:h}^\pi(s) + \frac{\gamma^h}{1-\gamma},\\
    \oV^\pi(s) &= \inf_{\{\widehat P_t\}_{t\ge0}}\bE_{s_t,a_t\sim \pi,\widehat P} \left[\sum_{t=0}^{+\infty} \gamma^t \left(r(s_t,a_t) + \gamma D(\widehat{P}_{t;s_t,a_t}, P_{s_t, a_t}) \right)\Big|s_0 = s\right]\\
    &\ge \inf_{\{\widehat P_t\}_{t\ge0}}\bE_{s_t,a_t\sim \pi,\widehat P} \left[\sum_{t=0}^{h} \gamma^t \left(r(s_t,a_t) + \gamma D(\widehat{P}_{t;s_t,a_t}, P_{s_t, a_t}) \right)\Big|s_0 = s\right]\\
    & = \inf_{\{\widehat P_t\}_{t=0}^h}\bE_{s_t,a_t\sim \pi,\widehat P} \left[\sum_{t=0}^{h} \gamma^t \left(r(s_t,a_t) + \gamma D(\widehat{P}_{t;s_t,a_t}, P_{s_t, a_t}) \right)\Big|s_0 = s\right]\\
    & = \oV_{0:h}^\pi(s)\\
    \Longrightarrow ~~&|\oV^\pi(s) - \oV_{0:h}^\pi(s)| \le \frac{\gamma^h}{1-\gamma}~~\Longrightarrow~~\lim_{h\to+\infty}\oV_{0:h}^\pi = \oV^\pi.
\end{align*}
And similarly, for $\oV^\star$, we have
\begin{align*}
    \oV^\star(s) &= \sup_{\pi}\! \inf_{\{\widehat P_t\}_{t\ge0}}\!\!\bE_{s_t,a_t\sim \pi,\widehat P} \left[\sum_{t=0}^{+\infty} \gamma^t \left(r(s_t,a_t) + \gamma D(\widehat{P}_{t;s_t,a_t}, P_{s_t, a_t}) \right)\Big|s_0 = s\right].\\
    &\le \sup_{\pi} \left[\inf_{\{\widehat P_t\}_{t=0}^h}\bE_{s_t,a_t\sim \pi,\widehat P} \left[\sum_{t=0}^{h} \gamma^t \left(r(s_t,a_t) + \gamma D(\widehat{P}_{t;s_t,a_t}, P_{s_t, a_t}) \right)\Big|s_0 = s\right]\right.\\
    &\left.\qquad\qquad + \bE_{s_t, a_t\sim \pi, P} \left[\sum_{t=h}^{+\infty} \gamma^t \left(r(s_t,a_t) + \gamma D(P_{s_t,a_t}, P_{s_t, a_t}) \right)\Big|s_0 = s\right]\right]\\
    & \le \sup_{\pi}\inf_{\{\widehat P_t\}_{t=0}^h}\bE_{s_t,a_t\sim \pi,\widehat P} \left[\sum_{t=0}^{h} \gamma^t \left(r(s_t,a_t) + \gamma D(\widehat{P}_{t;s_t,a_t}, P_{s_t, a_t}) \right)\Big|s_0 = s\right]\\
    &\qquad\qquad +\sup_{\pi}\bE_{s_t, a_t\sim \pi, P} \left[\sum_{t=h}^{+\infty} \gamma^t \left(r(s_t,a_t) + \gamma D(P_{s_t,a_t}, P_{s_t, a_t}) \right)\Big|s_0 = s\right]\\
    &= \oV^\pi_{0:h}(s) + \sup_{\pi}\bE_{s_t, a_t\sim \pi, P} \left[\sum_{t=h}^{+\infty} \gamma^t \left(r(s_t,a_t) + \gamma D(P_{s_t,a_t}, P_{s_t, a_t}) \right)\Big|s_0 = s\right]\\
    &\le \oV^\pi_{0:h}(s) + \frac{\gamma^h}{1-\gamma},\\
    \oV^\star(s) &= \sup_{\pi}\! \inf_{\{\widehat P_t\}_{t\ge0}}\!\!\bE_{s_t,a_t\sim \pi,\widehat P} \left[\sum_{t=0}^{+\infty} \gamma^t \left(r(s_t,a_t) + \gamma D(\widehat{P}_{t;s_t,a_t}, P_{s_t, a_t}) \right)\Big|s_0 = s\right].\\
    &\ge \sup_{\pi} \left[\inf_{\{\widehat P_t\}_{t=0}^h}\bE_{s_t,a_t\sim \pi,\widehat P} \left[\sum_{t=0}^{h} \gamma^t \left(r(s_t,a_t) + \gamma D(\widehat{P}_{t;s_t,a_t}, P_{s_t, a_t}) \right)\Big|s_0 = s\right]\right]\\
    &= V_{0:h}^\star(s)\\
     \Longrightarrow ~~&|\oV^\star(s) - \oV_{0:h}^\star(s)| \le \frac{\gamma^h}{1-\gamma}~~\Longrightarrow~~\lim_{h\to+\infty}\oV_{0:h}^\star = \oV^\star.
\end{align*}

Then from Lemma \ref{lemma:Bellman-eq-finite-horizon-star} and Lemma \ref{lemma:Bellman-eq-finite-horizon-pi} we have that
\begin{align*}
     \oV_{0:h+1}^\star = \cT^\star \oV_{0:h}^\star, ~~ \oV_{0:h+1}^\pi = \cT^\pi \oV_{0:h}^\pi.
\end{align*}
Since $\cT^\star, \cT^\pi$ is a continuous mapping, taking the limit on both sides of the equations we get$$\oV^\star = \cT^\star\oV^\star, \oV^\pi = \cT^\pi \oV^\pi,$$
}
i.e., \eqref{eq:markov-risk-measure-pi} and \eqref{eq:min-max-value-function-pi} obtains the same solution $\oV^\pi$ and \eqref{eq:markov-risk-measure-star} and \eqref{eq:min-max-value-function-star} obtains the same solution $\oV^\star$. 

Next, we will show that the claim that the minimal solution of \eqref{eq:min-max-value-function-pi} is given by \eqref{eq:hP-pi}. Since $\oV^\pi$ satisfies $\oV^\pi = \cT^\pi \oV^\pi$, we have
\begin{align*}
    \oV^\pi(s_0) &= \bE_{a_0\sim\pi(\cdot|s_0)} \left(r(s_0, a_0) + \gamma \min_{\hP{}_0} D(\hP{}_{0;s_0,a_0}, P_{s_0,a_0}) + \bE_{s_1\sim \hP{}_{0;s_0,a_0}} \oV^\pi(s_1)\right)\\
    & = \bE_{a_0\sim\pi(\cdot|s_0),s_1\sim \hP{\pi}_{s_0,a_0}} \left(r(s_0, a_0) + \gamma D(\hP{\pi}_{s_0,a_0},P_{s_0,a_0}) +\gamma \oV^\pi(s_1)\right).
\end{align*}
Apply this equation iteratively, we get
\begin{align*}
    \oV^\pi(s_0) &=\bE_{a_t\sim\pi(\cdot|s_t),s_{t+1}\sim \hP{\pi}_{s_t,a_t}} \left(r(s_0, a_0) +  \gamma D(\hP{\pi}_{s_0,a_0},P_{s_0,a_0}) + \gamma r(s_1,a_1) + \gamma^2  D(\hP{\pi}_{s_1,a_1},P_{s_1,a_1}) + \gamma^2 \oV^\pi(s_2)\right)\\
    &=...\\
    &= \bE_{a_t\sim\pi(\cdot|s_t),s_{t+1}\sim \hP{\pi}_{s_t,a_t}}  \sum_{t=0}^{+\infty} \gamma^t \left(r(s_t,a_t) + \gamma D(\hP{\pi}_{s_t,a_t}, P_{s_t, a_t}) \right),
\end{align*}
which concludes that the minimal solution is given by $\hP{\pi}$ defined in \eqref{eq:hP-pi}.

For $\oV^\star$. We aim to show that $\oV^\star = \oV^{\pi^\star}$. From the definition of $\pi^\star$ and the fact that $\oV^\star = \cT^\star\oV^\star$,
\begin{align*}
   \oV^\star(s_0) &= \max_{a_0}r(s_0, a_0) - \gamma\sigma(P_{s_0,a_0},\oV^\star)\\  
    &\stackrel{\eqref{eq:pi-star}}{=}\bE_{a_0\sim\pi^\star(\cdot|s_0)} \left(r(s_0, a_0) - \gamma \sigma(P_{s_0,a_0},\oV^\star)\right)\\
    &=\bE_{a_0\sim\pi^\star(\cdot|s_0)} \left(r(s_0, a_0) + \gamma \min_{\widehat P_0}\left(D(\hP{}_{0;s_0,a_0}, P_{s_0,a_0}) + \bE_{s_1\sim \hP{}_{0;s_0,a_0}}\oV^\star(s_1)\right)\right) \\
    &=\min_{\hP{}_{0}} \bE_{a_0\sim\pi^\star(\cdot|s_0)} \left(r(s_0, a_0) + \gamma \left(D(\hP{}_{0;s_0,a_0}, P_{s_0,a_0}) + \bE_{s_1\sim \hP{}_{0;s_0,a_0}}\oV^\star(s_1)\right)\right).
\end{align*}
Apply the above equation iteratively we get
\begin{align*}
    \oV^\star(s_0) &= \min_{\{\hP{}_t\}_{t=1}^{+\infty}}\bE_{a_t\sim\pi^\star(\cdot|s_t), s_{t+1}\sim\hP{}_{t;s_t,a_t}} \sum_{t=0}^{+\infty} \gamma^t \left(r(s_t,a_t) + \gamma D(\hP{\pi}_{s_t,a_t}, P_{s_t, a_t}) \right)\\
     & = \oV^{\pi^\star}(s_0),
\end{align*}
which implies that the optimal value function can be obtained by the stationary policy $\pi^\star$. Thus, the minimal transition probability is given by
\begin{align*}
    \hP\star_{s,a} = \hP{\pi^\star}_{s,a} = \argmin_{\widehat{P}} D(\widehat{P}, \Psa) + \bE_{s'\sim \widehat{P}}\oV^\star(s'),
\end{align*}
which completes the proof.
\end{proof}



\section{Proof of Theorem \ref{theorem:policy-gradient}}
\begin{proof}[Proof of Theorem \ref{theorem:policy-gradient}]
    We first prove the differentiability of $V^\theta$ with respect to $\theta$ by the implicit function theorem \cite{loomis1968advanced}. We define the $|\cS|$-dimentional multivariate function $F( \theta, V)$ as follows:
    \begin{equation*}
        [F(\theta, V)](s) = V(s) -  \sum_a \pi_\theta(a|s)(r(s,a) - \gamma \sigma(\Psa, V)).
    \end{equation*}
    From the definition of the value function for Markov risk measures, $V^\theta$ is given by the following implicit function:
    \begin{equation*}
        F(\theta, V^\theta) = 0.
    \end{equation*}
    Thus, from the implicit function theorem, to prove the differentiability of $V^\theta$ with respect to $\theta$, it suffices to prove that the Jacobian matrix 
    \begin{equation*}
        J_{F,V}(\theta, V^\theta) = \left[\frac{\partial F_s}{\partial V_{s'}}\Big|_{V = V^\theta}
        \right]_{s, s'\in \cS}
    \end{equation*}
    is invertible. Here $F_s$ denotes the $s$-th entry of $F$ and $V_{s'}$ the $s'$-th entry of $V$. From Lemma \ref{lemma:Jacobian-calculation}, 
    \begin{align*}
    J_{F,V}(\theta, V^\theta) = I - \gamma \hP\theta_{\cS},
\end{align*}
    where $\hP\theta_{\cS}$ is a stochastic matrix. Thus, $\|\gamma \hP\theta_{\cS}\|_\infty\le \gamma < 1$, which implies that $ I - \gamma \hP\theta_{\cS}$ is invertible, thus from the implicit function theorem $V^\theta$ is differentiable w.r.t. $\theta$.

    Given the differentiability, what is left is to calculate the gradient. We can further use implicit function theorem to compute the gradient, yet another easier way is through the following algebraic manipulation:
    \begin{align*}
       V^\theta(s_0) &= \sum_{a_0} \pi_\theta(a_0|s_0)\left(r(s_0,a_0) - \gamma \sigma(P_{s_0,a_0},V^\theta) \right)\\
       \Longrightarrow~~ \nabla_\theta  V^\theta(s_0) &= \sum_{a_0}\nabla_\theta\pi_\theta(a_0|s_0)\left(r(s_0,a_0) - \gamma \sigma(P_{s_0,a_0},V^\theta) \right) \\
       &\quad - \gamma \sum_{a_0}\pi_\theta(a_0|s_0)\sum_{s_1}\frac{\partial \sigma(P_{s_0,a_0},\cdot)}{\partial V_{s_1}}\Big|_{V = V^\theta}\nabla_\theta V^\theta(s_1)\\
       &= \sum_{a_0}\pi_\theta(a_0|s_0)Q^\theta(s_0,a_0)\nabla_\theta\log\pi_\theta(a_0|s_0)\\
       &\quad +\gamma \sum_{a_0}\pi_\theta(a_0|s_0)\sum_{s_1}\hP\theta_{s_0,a_0}(s_1)\nabla_\theta V^\theta(s_1)~~~~\textup{(Lemma \ref{lemma:Jacobian-calculation})}\\
       &= \bE_{a_0\sim\pi_\theta(\cdot|s_0)}Q^\theta(s_0,a_0)\nabla_\theta\log\pi_\theta(a_0|s_0) + \gamma \bE_{s_1\sim\hP\theta_{s_0,a_0}}\nabla_\theta V^\theta(s_1)
    \end{align*}
    Applying the above equation iteratively we get:
\begin{align*}
   &\quad V^\theta(s_0) =  \bE_{a_0\sim\pi_\theta(\cdot|s_0)}Q^\theta(s_0,a_0)\nabla_\theta\log\pi_\theta(a_0|s_0) + \gamma \bE_{s_1\sim\hP\theta_{s_0,a_0}}\nabla_\theta V^\theta(s_1)\\
   &= \hspace{-15pt}\mathop{\bE}_{a_t\sim\pi_\theta(\cdot|s_t), s_{t+1}\sim\hP\theta_{s_{t},a_{t}}, t = 0,1}\hspace{-15pt}Q^\theta(s_0,a_0)\nabla_\theta\log\pi_\theta(a_0|s_0) + \gamma Q^\theta(s_1,a_1)\nabla_\theta\log\pi_\theta(a_1|s_1) + \gamma^2\nabla_\theta V^\theta(s_2)\\
   &=\cdots\\
   &= \bE_{a_t\sim \pi_\theta(\cdot|s_t), s_{t+1}\sim \hP\theta_{s_t,a_t}} \sum_{t=1}^{+\infty}\gamma^t Q^\theta(s_t,a_t)\nabla_\theta \log \pi_\theta(a_t|s_t),
\end{align*}
which completes the proof.
\end{proof}

\begin{lemma}\label{lemma:Jacobian-calculation}
\begin{align*}
    \frac{\partial \sigma(\Psa, \cdot)}{\partial V_{s'}}\Big|_{V = V^\theta} = -\hP\theta_{s,a}(s'),
\end{align*}
which implies that
\begin{align*}
    J_{F,V}(\theta, V^\theta) = I - \gamma \hP\theta_{\cS},
\end{align*}
where $\hP\theta_{\cS}$ is a stochastic matrix given by
\begin{align*}
    [\hP\theta_{\cS}]_{s,s'} = \hP\theta_{\cS}(s'|s) = \sum_{a}\pi_\theta(a|s)\hP\theta_{s,a}(s').
\end{align*}
\begin{proof}
From the definition of $\hP\theta_{s,a}(s')$ \eqref{eq:hP-pi} and the dual representation theorem (Theorem \ref{thm:convex-risk-dual-representation}) we have
\begin{align*}
    &\sigma(\Psa, V^\theta) = -\left(\min_{\widehat P}\bE_{s'\sim\widehat{P}} V^\theta(s')+ D(\widehat P, P_{s,a})\right) = -\bE_{s'\sim\hP\theta_{s,a}}V^\theta(s') -D(\hP\theta_{s,a}, P_{s,a})\\
    \Longrightarrow~~ &D(\hP\theta_{s,a}, P_{s,a}) = -\sigma(\Psa, V^\theta)- \bE_{s'\sim\hP\theta_{s,a}}V^\theta(s').
\end{align*}
From the definition of $D$:
\begin{align*}
    D(\hP\theta_{s,a}, \Psa) = \sup_V -\sigma(\Psa, V) - \bE_{s'\sim \hP\theta_{s,a}}V(s')\\
    \Longrightarrow ~~ V^\theta = \argmax_V -\sigma(\Psa, V) - \bE_{s'\sim \hP\theta_{s,a}}V(s'),
\end{align*}
thus
\begin{align*}
    \nabla_V \left(\sigma(\Psa, V) + \bE_{s'\sim \hP\theta_{s,a}}V(s')\right)\Big|_{V = V^\theta} = 0\\
    \Longrightarrow ~~ \frac{\partial\sigma(\Psa,\cdot)}{\partial V_{s'}}\Big|_{V = V^\theta} = -\frac{\partial \bE_{s''\sim \hP\theta_{s,a}}V(s'')}{\partial V_{s'}}\Big|_{V = V^\theta}=-\hP\theta_{s,a}(s').
\end{align*}
Then we have
\begin{align*}
    \frac{\partial F_s}{\partial V_{s'}}\Big|_{V = V^\theta} &= \frac{\partial \left(V(s) - \sum_a \pi_\theta(a|s)(r(s,a) - \gamma \sigma(\Psa, V))\right)}{\partial V_{s'}}\\
    &=\mathbf{1}\{s' = s\} + \gamma\sum_{a}\pi_\theta(a|s) \frac{\partial \sigma(\Psa,\cdot)}{\partial V_{s'}}\Big|_{V = V^\theta}\\
    &= \mathbf{1}\{s' = s\} - \gamma\sum_{a}\pi_\theta(a|s)\hP\theta_{s,a}(s')\\
    &= \mathbf{1}\{s' = s\} - \gamma[\hP\theta_{\cS}]_{s,s'},
\end{align*}
which completes the proof.
\end{proof}
\end{lemma}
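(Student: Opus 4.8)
The statement to establish has two parts: the coordinate formula $\frac{\partial \sigma(\Psa,\cdot)}{\partial V_{s'}}\big|_{V=V^\theta} = -\hP\theta_{s,a}(s')$, and its consequence for the Jacobian $J_{F,V}$. The plan is to derive the first by an envelope argument and then obtain the second by direct differentiation.

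First I would compute the coordinate derivative. By the dual representation \eqref{eq:dual-representation-d-to-V}, $\sigma(\Psa,V) = \sup_{\widehat\mu\in\Delta^{|\cS|}}\big(-\bE_{\widehat\mu}V - D(\widehat\mu,\Psa)\big)$, a pointwise supremum of maps that are affine---hence smooth---in $V$, each with $V$-gradient $-\widehat\mu$. The equivalence theorem (Theorem \ref{theorem:dual-representation}, eq.\ \eqref{eq:hP-pi}) identifies the maximizer at $V=V^\theta$: minimizing $D(\widehat P,\Psa)+\bE_{s'\sim\widehat P}V^\theta(s')$ is exactly maximizing $-\bE_{\widehat P}V^\theta - D(\widehat P,\Psa)$, so the optimizer is $\widehat\mu=\hP\theta_{s,a}$. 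Since $\sigma(\Psa,\cdot)$ is assumed differentiable at $V^\theta$, Danskin's (envelope) theorem lets me evaluate $\nabla_V\sigma$ as the $V$-gradient of the inner objective frozen at the maximizer, giving $\frac{\partial \sigma(\Psa,\cdot)}{\partial V_{s'}}\big|_{V^\theta} = -\hP\theta_{s,a}(s')$; note the assumed differentiability of $\sigma$ itself rules out the degenerate case of competing maximizers with distinct gradients.

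To sidestep any uniqueness concern in the envelope step, I would alternatively argue via the biconjugate \eqref{eq:dual-representation-V-to-d}. Because $\hP\theta_{s,a}$ attains the infimum, $D(\hP\theta_{s,a},\Psa) = -\sigma(\Psa,V^\theta) - \bE_{s'\sim\hP\theta_{s,a}}V^\theta(s')$, which says precisely that $V^\theta$ attains the supremum in $D(\hP\theta_{s,a},\Psa)=\sup_V\big(-\sigma(\Psa,V)-\bE_{s'\sim\hP\theta_{s,a}}V(s')\big)$. The first-order stationarity condition of this unconstrained maximization over $V$ reads $\nabla_V\big(\sigma(\Psa,V)+\bE_{s'\sim\hP\theta_{s,a}}V\big)\big|_{V^\theta}=0$, which again yields $\frac{\partial \sigma(\Psa,\cdot)}{\partial V_{s'}}\big|_{V^\theta} = -\hP\theta_{s,a}(s')$; this route uses nothing beyond the assumed differentiability of $\sigma$.

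With the coordinate formula in hand, the Jacobian is immediate. Recalling $[F(\theta,V)]_s = V(s) - \sum_a\pi_\theta(a|s)\big(r(s,a)-\gamma\sigma(\Psa,V)\big)$, only the $\sigma$ term carries $V$-dependence, so $\frac{\partial F_s}{\partial V_{s'}}\big|_{V^\theta} = \mathbf{1}\{s=s'\} + \gamma\sum_a\pi_\theta(a|s)\frac{\partial \sigma(\Psa,\cdot)}{\partial V_{s'}}\big|_{V^\theta} = \mathbf{1}\{s=s'\} - \gamma\sum_a\pi_\theta(a|s)\hP\theta_{s,a}(s')$. Identifying the last sum with $[\hP\theta_{\cS}]_{s,s'}$ gives $J_{F,V}(\theta,V^\theta)=I-\gamma\hP\theta_{\cS}$; that $\hP\theta_{\cS}$ is row-stochastic follows since each row is a $\pi_\theta(\cdot|s)$-weighted average of the probability vectors $\hP\theta_{s,a}$. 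The main obstacle is the differentiation-under-the-supremum in the first step: the clean justification is the biconjugate/stationarity route, which converts the envelope claim into an unconstrained first-order condition and so avoids any delicate uniqueness or KKT analysis, relying only on the differentiability of $\sigma$ that the theorem already assumes.
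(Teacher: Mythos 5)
Your proposal is correct, and its core argument --- the biconjugate/stationarity route --- is precisely the paper's own proof: from \eqref{eq:hP-pi} and the dual representation \eqref{eq:dual-representation-d-to-V} you conclude that $V^\theta$ attains the supremum defining $D(\hP{\theta}_{s,a},\Psa)$ in \eqref{eq:dual-representation-V-to-d}, the unconstrained first-order condition then gives $\partial\sigma(\Psa,\cdot)/\partial V_{s'}\big|_{V=V^\theta}=-\hP{\theta}_{s,a}(s')$, and direct differentiation of $F_s$ yields $J_{F,V}(\theta,V^\theta)=I-\gamma\hP{\theta}_{\cS}$. The Danskin/envelope argument you offer first is a sound alternative view of the same fact (differentiability of the convex function $\sigma(\Psa,\cdot)$ forces all maximizers in \eqref{eq:dual-representation-d-to-V} to share the same gradient), but as you yourself note it is subsumed by the stationarity route, which is exactly what the paper does.
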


\section{Proof of Lemma \ref{lemma:gradient-domination} and Theorem \ref{theorem:gradient-ascent}}
{ Before proving Lemma \ref{lemma:gradient-domination} and Theorem \ref{theorem:gradient-ascent}, we first introduce the performance difference lemma for soft RMDPs, which will play an important role in the following proofs. The lemma adopts from the performance difference lemma for risk-neutral MDPs (c.f. \cite{kakade2002approximate}) }
{
\begin{lemma}[Performance Difference Lemma for soft RMDPs]\label{lemma:performance-difference-soft-MDP}
Given stationary policies $\pi', \pi$, we have that
    \begin{align*}
       & \bE_{s_t,a_t\sim \pi',\hP\pi}\sum_{t=0}^{+\infty} \gamma^t\left(r(s_t,a_t)+\gamma D(\hP{\pi}_{t;s_t,a_t},P_{s_t,a_t})\right)- \bE_{s_t,a_t\sim \pi,\hP\pi}\sum_{t=0}^{+\infty} \gamma^t\left(r(s_t,a_t)+\gamma D(\hP{\pi}_{t;s_t,a_t},P_{s_t,a_t})\right)\notag\\
   &= \frac{1}{1-\gamma}\sum_{s,a} d^{\pi^\star,\hP\pi}(s) (\pi'(a|s) - \pi(a|s))Q^\pi(s,a)
    \end{align*}
\begin{proof}
 For notational simplicity in this proof we also define the value function $V^{\pi,\hP{}}$ and Q-function $Q^{\pi,\hP{}}$ for a given policy $\pi$ under a given probability transition $\hP{}$ as follows: 
 \begin{align*}
     V^{\pi,\hP{}}(s)&:= \bE_{s_t,a_t\sim \pi,\hP{}}\sum_{t=0}^{+\infty} \left[\gamma^t\left(r(s_t,a_t)+\gamma D(\hP{}_{t;s_t,a_t},P_{s_t,a_t})\right)|s_0 = s\right],\\
     Q^{\pi,\hP{}}(s)&:= \bE_{s_t,a_t\sim \pi,\hP{}}\sum_{t=0}^{+\infty} \left[\gamma^t\left(r(s_t,a_t)+\gamma D(\hP{}_{t;s_t,a_t},P_{s_t,a_t})\right)|s_0 = s, a_0 = a\right]
 \end{align*}
 Also from Theorem \ref{theorem:dual-representation} we know that $V^{\pi,\hP\pi} = V^\pi, Q^{\pi,\hP\pi} = Q^\pi$.
 Then we only need to show that
 \begin{align*}
     V^{\pi',\hP\pi}(s) - V^{\pi,\hP\pi}(s) = \frac{1}{1-\gamma}\sum_{s,a} d^{\pi^\star,\hP\pi}(s) (\pi'(a|s) - \pi(a|s))Q^\pi(s,a).
 \end{align*}
The left hand side of the equation can be decomposed as
    \begin{align*}
       & \bE_{s_t,a_t\sim \pi',\hP\pi}\sum_{t=0}^{+\infty} \gamma^t\left(r(s_t,a_t)+\gamma D(\hP{\pi}_{t;s_t,a_t},P_{s_t,a_t})\right)- \bE_{s_t,a_t\sim \pi,\hP\pi}\sum_{t=0}^{+\infty} \gamma^t\left(r(s_t,a_t)+\gamma D(\hP{\pi}_{t;s_t,a_t},P_{s_t,a_t})\right)\\
       &=\underbrace{ \bE_{s_t,\!a_t\!\sim\! \pi',\!\hP\pi}\!\!\sum_{t=0}^{+\infty} \!\gamma^t\!\left(r(s_t,a_t)\!+\!\gamma D(\hP{\pi}_{t;s_t,a_t},P_{s_t,a_t})\right)\!-\! \bE_{a_0\sim \pi', s_t,a_t\sim \pi,\hP\pi, t\ge 1}\sum_{t=0}^{+\infty}\! \gamma^t\left(r(s_t,a_t)\!+\!\gamma D(\hP{\pi}_{t;s_t,a_t},P_{s_t,a_t})\right)}_{\textup{Part A}}\\
       &+ \underbrace{\bE_{a_0\sim \pi', s_t\!,a_t\sim \pi,\hP\pi, t\ge 1}\sum_{t=0}^{+\infty} \gamma^t\left(r(s_t,a_t)\!+\!\gamma D(\hP{\pi}_{t;s_t,a_t},P_{s_t,a_t})\right) \!-\! \bE_{s_t,a_t\sim \pi,\hP\pi}\sum_{t=0}^{+\infty} \gamma^t\left(r(s_t,a_t)\!+\!\gamma D(\hP{\pi}_{t;s_t,a_t},P_{s_t,a_t})\right)}_{\textup{Part B}}.
    \end{align*}
    Note that 
    \begin{align*}
    &\textup{Part A} \\
    &=\bE_{a_0\sim \pi'}\left(\bE_{s_t,a_t\sim \pi',\hP\pi, t\ge 1}\sum_{t=0}^{+\infty} \gamma^t\left(r(s_t,a_t)+\gamma D(\hP{\pi}_{t;s_t,a_t},P_{s_t,a_t})\right)\right.\\&\left.\qquad\qquad- \bE_{s_t,a_t\sim \pi,\hP\pi, t\ge1}\sum_{t=0}^{+\infty} \gamma^t\left(r(s_t,a_t)+\gamma D(\hP{\pi}_{t;s_t,a_t},P_{s_t,a_t})\right)\right)\\
    & = \gamma\bE_{a_0\sim\pi', s_1\sim\hP\pi} (V^{\pi',\hP\pi}(s_1) - V^{\pi',\hP\pi}(s_1))
    \end{align*}
    and
    \begin{align*}
        \textup{Part B} &=\bE_{a_0\sim \pi'} Q^{\pi,\hP\pi}(s_0, a_0) - \bE_{a_0\sim \pi} Q^{\pi,\hP\pi}(s_0, a_0) \\
        &= \sum_{a_0}(\pi'(a_0|s_0) -\pi(a_0|s_0))Q^{\pi,\hP\pi}(s_0, a_0).
    \end{align*}
    Thus we get
    \begin{align*}
       &\quad  V^{\pi',\hP\pi}(s_0) - V^{\pi,\hP\pi}(s_0)  \\&=\gamma \bE_{a_0\sim\pi', s_1\sim\hP\pi} (V^{\pi',\hP\pi}(s_1) - V^{\pi',\hP\pi}(s_1)) + \sum_{a_0}(\pi'(a_0|s_0) -\pi(a_0|s_0))Q^{\pi,\hP\pi}(s_0, a_0).
    \end{align*}
    Applying this equality iteratively we get
    \begin{align*}
        V^{\pi',\hP\pi}(s_0) - V^{\pi,\hP\pi}(s_0) 
        &= \sum_{t=0}^\infty \gamma^t\bE_{a_\tau, s_\tau \sim \pi',\hP\pi} \sum_{a_t}(\pi'(a_t|s_t) -\pi(a_t|s_t))Q^{\pi,\hP\pi}(s_t, a_t)\\
        & = \frac{1}{1-\gamma}\sum_{s,a} d^{\pi^\star,\hP\pi}(s) (\pi'(a|s) - \pi(a|s))Q^\pi(s,a),
    \end{align*}
    which completes the proof.
\end{proof}
\end{lemma}
}
\begin{proof}[Proof of Lemma \ref{lemma:gradient-domination}]
From Theorem \ref{theorem:dual-representation}, we have
\begin{align*}
    \bE_{s_0\sim\rho}V^\star(s_0) &= \min_{\{\widehat{P}_t\}_{t=0}^h}\bE_{s_t,a_t\sim \pi^\star,\widehat{P}^\theta}\sum_{t=0}^{+\infty} \gamma^t\left(r(s_t,a_t)+\gamma D(\hP{\theta}_{t;s_t,a_t},P_{s_t,a_t})\right)\\
    &\le \bE_{s_t,a_t\sim \pi^\star,\hP\theta}\sum_{t=0}^{+\infty} \gamma^t\left(r(s_t,a_t)+\gamma D(\hP{\theta}_{t;s_t,a_t},P_{s_t,a_t})\right)\\
    \bE_{s_0\sim\rho}V^\theta(s_0) &= \bE_{s_t,a_t\sim \pi_\theta,\hP\theta}\sum_{t=0}^{+\infty} \gamma^t\left(r(s_t,a_t)+\gamma D(\hP{\theta}_{t;s_t,a_t},P_{s_t,a_t})\right)\\.
\end{align*}
Thus
\begin{align}
   &\quad \bE_{s_0\sim\rho}V^\star(s_0) -  V^\theta(s_0)\notag\\
   &\le \bE_{s_t,a_t\sim \pi^\star,\hP\theta}\sum_{t=0}^{+\infty} \gamma^t\left(r(s_t,a_t)+\gamma D(\hP{\theta}_{t;s_t,a_t},P_{s_t,a_t})\right) \notag\\
   &\qquad - \bE_{s_t,a_t\sim \pi_\theta,\hP\theta}\sum_{t=0}^{+\infty} \gamma^t\left(r(s_t,a_t)+\gamma D(\hP{\theta}_{t;s_t,a_t},P_{s_t,a_t})\right)\notag\\
   &= \frac{1}{1-\gamma}\sum_{s,a} d^{\pi^\star,\hP\theta}(s) (\pi^\star(a|s) - \pi_\theta(a|s))Q^\theta(s,a)~~~\textup{\small (by Lemma \ref{lemma:performance-difference-soft-MDP})}\notag\\
   &\le \frac{1}{1-\gamma}\sum_{s,a} d^{\pi^\star,\hP\theta}(s) \max_{\overline\pi}(\overline \pi(a|s) - \pi_\theta(a|s))Q^\theta(s,a)\label{eq:optimality-gap-ineq}\\
   &\le \frac{1}{1-\gamma}\left\|\frac{d^{\pi^\star,\hP\theta}}{d^{\pi_\theta,\hP\theta}}\right\|_\infty \sum_{s,a} d^{\pi_\theta,\hP\theta}(s) \max_{\overline\pi}(\overline \pi(a|s) - \pi_\theta(a|s))Q^\theta(s,a)\notag\\
   &= \left\|\frac{d^{\pi^\star,\hP\theta}}{d^{\pi_\theta,\hP\theta}}\right\|_\infty \max_{\overline\pi} \left<\overline\pi - \pi_\theta, \bE_{s_0\sim\rho}\nabla_\theta V^\theta(s_0)\right> \notag
\end{align}
\end{proof}

\begin{proof}[Proof of Theorem \ref{theorem:gradient-ascent}] For notational simplicity, we define use $\pi_s:= \pi(\cdot|s)$ to denote the $|\cA|$-dimentional probability distribution. We also use the abbreviation $Q^{(k)}, \hP{(k)}$ to denote $Q^{\pi^{(k)}}, \hP{\pi^{(k)}}$. We also define the following variable that will be useful throughout the proof.
\begin{align*}
    G_\eta^{(k)}:= \frac{1}{\eta}\left(\pi_s^{(k+1)} - \pi_s^{(k)}\right)
\end{align*}

Similar to the proof of Lemma \ref{lemma:gradient-domination}, we have

\begin{align*}
    \bE_{s_0\sim\rho}V^{(k)}(s_0) &= \min_{\{\widehat{P}_t\}_{t=0}^h}\bE_{s_t,a_t\sim \pi^{(k)},\widehat{P}}\sum_{t=0}^{+\infty} \gamma^t\left(r(s_t,a_t)+\gamma D(\hP{}_{t;s_t,a_t},P_{s_t,a_t})\right)\\
    &\le \bE_{s_t,a_t\sim \pi^{(k)},\hP{(k+1)}}\sum_{t=0}^{+\infty} \gamma^t\left(r(s_t,a_t)+\gamma D(\hP{(k+1)}_{t;s_t,a_t},P_{s_t,a_t})\right)\\
    \bE_{s_0\sim\rho}V^{(k+1)}(s_0) &= \bE_{s_t,a_t\sim \pi^{(k+1)},\hP{(k+1)}}\sum_{t=0}^{+\infty} \gamma^t\left(r(s_t,a_t)+\gamma D(\hP{(k+1)}_{t;s_t,a_t},P_{s_t,a_t})\right).
\end{align*}
Thus
    \begin{align*}
        &\bE_{s_0\sim\rho} V^{(k+1)}(s_0) - V^{(k)}(s_0) \ge \bE_{s_t,a_t\sim \pi^{(k+1)},\hP{{(k+1)}}} \sum_{t=0}^{+\infty} \gamma^t \left(r(s_t,a_t) + \gamma D(\widehat{P}^{(k+1)}_{t;s_t,a_t}, P_{s_t, a_t}) \right)\\
    &\qquad\qquad\qquad\qquad\qquad\qquad-\bE_{s_t,a_t\sim \pi^{(k)},\hP{{(k+1)}}} \sum_{t=0}^{+\infty} \gamma^t \left(r(s_t,a_t) + \gamma D(\widehat{P}^{(k+1)}_{t;s_t,a_t}, P_{s_t, a_t}) \right)\\
    &= \frac{1}{1-\gamma}\sum_{s,a} d^{\pi^{(k)}, \hP{(k+1)}}(s) (\pi^{(k+1)}(a|s) - \pi^{(k)}(a|s))Q^{(k+1)}(s,a)~~~\textup{\small (by Lemma \ref{lemma:performance-difference-soft-MDP})}\\
    & =  \underbrace{\frac{1}{1-\gamma}\sum_{s,a} d^{\pi^{(k)}, \hP{(k+1)}}(s) (\pi^{(k+1)}(a|s) - \pi^{(k)}(a|s))Q^{(k)}(s,a)}_{\textup{Part I}}\\
    &~~ +  \underbrace{\frac{1}{1-\gamma}\sum_{s,a} d^{\pi^{(k)}, \hP{(k+1)}}(s) (\pi^{(k+1)}(a|s) - \pi^{(k)}(a|s))(Q^{(k+1)}(s,a)-Q^{(k)}(s,a))}_{\textup{Part II}}.
    \end{align*}
    From Lemma \ref{lemma:projection-property-1}, we have
    \begin{align*}
         &\sum_a (\pi^{(k+1)}(a|s) - \pi^{(k)}(a|s))Q^{(k)}(s,a) = \<\pi^{(k+1)}_s - \pi^{(k)}_s, Q^{(k)}(s,\cdot)\>\\
         &= \left\<\projA\left(\pi^{(k)}_s + \eta\frac{1}{1-\gamma} d^{\pi^{(k)}, \hP{(k)}}(s)Q^{(k)}(s,\cdot)\right), Q^{(k)}(s,\cdot)\right\>\\
         &\ge \frac{1-\gamma}{\eta d^{\pi^{(k)}, \hP{(k)}}(s)}\|\pi_s^{(k+1)} - \pi_s^{(k)}\|_2^2.
    \end{align*}
Thus
\begin{align*}
    \textup{Part I} &\ge \frac{1}{\eta}\sum_{s} \frac{d^{\pi^{(k)}, \hP{(k+1)}}(s)}{d^{\pi^{(k)}, \hP{(k)}}(s)}\|\pi_s^{(k+1)} - \pi_s^{(k)}\|_2^2\\
    &\ge \frac{1}{\eta M}\|\pi^{(k+1)} - \pi^{(k)}\|_2^2\\
    &= \frac{\eta}{ M}\|G_\eta^{(k)}\|_2^2.
\end{align*}
\begin{rmk}\label{rmk:proof-dependency-on-M}
    Note that for standard MDP, the corresponding Part I can be bounded by ${\eta}\|G_\eta^{(k)}\|_2^2$ instead of $\frac{\eta}{ M}\|G_\eta^{(k)}\|_2^2$. This is the key reason why the dependency on $M$ is worse for our setting.
\end{rmk}

For Part II, we can bound it using Lemma \ref{lemma:Q-smoothness}
\begin{align*}
    |\textup{Part II}|&\le \frac{1}{1-\gamma}\sum_{s,a} d^{\pi^{(k)}, \hP{(k+1)}}(s) \left|\pi^{(k+1)}(a|s) - \pi^{(k)}(a|s)\right| \left|Q^{(k+1)}(s,a)-Q^{(k)}(s,a)\right|\\
    & \le \frac{1}{(1-\gamma)^3}\sum_{s,a} d^{\pi^{(k)}, \hP{(k+1)}}(s) \left|\pi^{(k+1)}(a|s) - \pi^{(k)}(a|s)\right| \max_s\|\pi^{(k+1)}_s - \pi^{(k)}_s\|_1\\
    &\le \frac{1}{(1-\gamma)^3} \left(\max_s\|\pi^{(k+1)}_s - \pi^{(k)}_s\|_1\right)^2\\
    & \le  \frac{|\cA|}{(1-\gamma)^3} \|\pi^{(k+1)} - \pi^{(k)}\|_2^2\\
    &= \frac{\eta^2|\cA|}{(1-\gamma)^3}\|G_\eta^{(k)}\|_2^2.
\end{align*}
Combining the bounds on Part I and II we get, for $\eta = \frac{(1-\gamma)^3}{2|\cA|M}$
\begin{align*}
    \bE_{s_0\sim\rho} V^{(k+1)}(s_0) - V^{(k)}(s_0) \ge \left(\frac{\eta}{ M} - \frac{\eta^2|\cA|}{(1-\gamma)^3}\right)\|G_\eta^{(k)}\|_2^2
    = \frac{(1-\gamma)^3}{4|\cA|M} \|G_\eta^{(k)}\|_2^2
\end{align*}
Using the telescoping technique we get
\begin{align}
   \sum_{k=0}^{K-1}\|G_\eta^{(k)}\|_2^2&\le \frac{4|\cA|M^2}{(1-\gamma)^3}\sum_{k=0}^{K-1}\bE_{s_0\sim\rho} (V^{(k+1)}(s_0) - V^{(k)}(s_0)) \notag\\
  & {\le  \frac{4|\cA|M^2}{(1-\gamma)^3} \left(V^{(K)}(s_0) - V^{(0)}(s_0)\right)\le \frac{4|\cA|M^2}{(1-\gamma)^4}}\label{eq:telescoping}
\end{align}
{where the last inequality uses the fact that $ 0\!\le\! V^\pi(s) \!\le\! \bE_{s_t,a_t\sim \pi, P} [\sum_{t=0}^{+\infty} \gamma^t r(s_t,a_t) \Big|s_0 \!\!=\!\! s] \!\le\! \frac{1}{1-\gamma}$.}
\textit{Claim:}
\begin{align*}
    \bE_{s_0\sim\rho}V^\star(s_0) - V^{(k+1)}(s) \le \left(M + \frac{\eta|\cA|}{(1-\gamma)^3}\right)\|G_\eta^{(k)}\|_2
\end{align*}
\begin{proof}[Proof of Claim]
    From the definition of projection, for any $\pi_s'\in\Delta^{|\cA|}$,
    \begin{align*}
        \<\pi^{(k)}_s + \eta\frac{1}{1-\gamma} d^{\pi^{(k)}, \hP{(k)}}(s)Q^{(k)}(s,\cdot) &- \pi_s^{(k+1)}, \pi_s' - \pi_s^{(k+1)}\>\le 0\\
        \Longrightarrow ~~ \<Q^{(k)}(s,\cdot), \pi_s' - \pi_s^{(k+1)}\>&\le \frac{1-\gamma}{\eta d^{\pi^{(k)}, \hP{(k)}}(s)} \<\pi_s^{(k+1)} - \pi_s^{(k)}, \pi_s' - \pi^{(k+1)}\>\\
        &\le \frac{(1-\gamma)M}{\eta }\|\pi_s^{(k+1)} - \pi_s^{(k)}\|_2\|\pi_s' - \pi^{(k+1)}\|_2\\
        &\le  (1-\gamma){M} \|G_\eta^{(k)}\|_2\|\pi_s' - \pi^{(k+1)}\|_2.
    \end{align*}
    Thus
    \begin{align*}
       &\<Q^{(k+1)}(s,\cdot), \pi_s' - \pi_s^{(k+1)}\>\le \<Q^{(k)}(s,\cdot), \pi_s' - \pi_s^{(k+1)}\> + \<Q^{(k+1)}(s,\cdot)-Q^{(k)}(s,\cdot), \pi_s' - \pi_s^{(k+1)}\>\\
       &\le  (1-\gamma){M}  \|G_\eta^{(k)}\|_2\|\pi_s' - \pi^{(k+1)}\|_2 + \|Q^{(k+1)}(s,\cdot)-Q^{(k)}(s,\cdot)\|_\infty\|\pi_s' - \pi_s^{(k+1)}\|_1\\
       &\le  (1-\gamma){M}  \|G_\eta^{(k)}\|_2\|\pi_s' - \pi^{(k+1)}\|_2 + \sqrt{|\cA|}\|Q^{(k+1)}(s,\cdot)-Q^{(k)}(s,\cdot)\|_\infty\|\pi_s' - \pi_s^{(k+1)}\|_2.
    \end{align*}
    Further, from Lemma \ref{lemma:Q-smoothness}, 
    \begin{align*}
        \|Q^{(k+1)}(s,\cdot)-Q^{(k)}(s,\cdot)\|_\infty &\le \frac{1}{(1-\gamma)^2}\max_{\overline s\in \cS}\|\pi_{\overline s}^{(k+1)} - \pi_{\overline s}^{(k)}\|_1\le  \frac{\sqrt{|\cA|}}{(1-\gamma)^2}\|\pi^{(k+1)} - \pi^{(k)}\|_2\\
        &= \frac{\eta \sqrt{|\cA|}}{(1-\gamma)^2}\|G_\eta^{(k)}\|_2.
    \end{align*}
    Thus
    \begin{align*}
        \<Q^{(k+1)}(s,\cdot), \pi_s' - \pi_s^{(k+1)}\>&\le (1-\gamma){M}  \|G_\eta^{(k)}\|_2\|\pi_s' - \pi^{(k+1)}\|_2 +  \frac{\eta {|\cA|}}{(1-\gamma)^2}\|G_\eta^{(k)}\|_2\|\pi_s' - \pi_s^{(k+1)}\|_2\\
        &= \left( (1-\gamma){M}  + \frac{\eta {|\cA|}}{(1-\gamma)^2}\right)\|G_\eta^{(k)}\|_2\|\pi_s' - \pi^{(k+1)}\|_2
    \end{align*}
    From the proof of Lemma \ref{lemma:gradient-domination} (inequality \eqref{eq:optimality-gap-ineq}), we have
    \begin{align*}
        \bE_{s_0\sim\rho}V^\star(s_0) - V^{(k+1)}(s) &\le \frac{1}{1-\gamma}\sum_{s.a}d^{\pi^\star,\hP{(k+1)}}(s)\max_{\pi'}(\pi'(a|s) - \pi^{k+1}(a|)s)Q^{(k+1)}(s,a)\\
        & = \frac{1}{1-\gamma}\sum_{s.a}d^{\pi^\star,\hP{(k+1)}}(s) \max_{\pi'}\<Q^{(k+1)}(s,\cdot), \pi_s' - \pi_s^{(k+1)}\>\\
        &\le \frac{1}{1-\gamma}\max_s \max_{\pi'}\<Q^{(k+1)}(s,\cdot), \pi_s' - \pi_s^{(k+1)}\>\\
        &\le \frac{1}{1-\gamma }\left( (1-\gamma){M}  + \frac{\eta {|\cA|}}{(1-\gamma)^2}\right)\|G_\eta^{(k)}\|_2\|\pi_s' - \pi^{(k+1)}\|_2\\
        &= \left(M + \frac{\eta|A|}{(1-\gamma)^3}\right)\|G_\eta^{(k)}\|_2,
    \end{align*}
    which completes the proof of the claim.
\end{proof}
Substitute the claim into \eqref{eq:telescoping} we get
\begin{align*}
    \sum_{k=1}^K \left(\bE_{s_0\sim\rho}V^\star(s_0) - V^{(k)}(s_0)\right)^2 \le \frac{16|\cA| M^4}{(1-\gamma)^4},
\end{align*}
and thus
\begin{align*}
   \min_{1\le k\le K} \bE_{s_0\sim\rho}V^\star(s_0) - V^{(k)}(s_0) \le \sqrt{\frac{16|\cA| M^4}{(1-\gamma)^4 K}}.
\end{align*}
By setting 
\begin{equation*}
    K \ge \frac{16|\cA| M^4}{(1-\gamma)^4 \epsilon^2},
\end{equation*}
we get
\begin{align*}
   \min_{1\le k\le K} \bE_{s_0\sim\rho}V^\star(s_0) - V^{(k)}(s_0) \le \epsilon.
\end{align*}
\end{proof}
\section{Proof of Theorem \ref{theorem:sample-complexity}}\label{apdx:RFQI}
\subsection{Proof Sketches}
Before providing the full proof, in this section we first give a brief proof sketch of Theorem \ref{theorem:sample-complexity}. The lemmas in the proof sketch is proved in the following sections. We define the following auxiliary variables: $$Q_k(s,a) := r(s,a) - \gamma\beta^{-1}\log Z_k(s,a), \quad \pi_k(s) := \argmax_a Q_k(s,a).$$
The proof of Theorem \ref{theorem:sample-complexity} can be decoupled into the following four steps:
\paragraph{Step 1: Decomposition of the performance difference.} In this step, we first decompose the performance difference $\bE_{s_0\sim \rho}(V^{\star} - V^{\pi_k})(s_0)$ in terms of $[Q^\star - Q_k]$ and $[Q_k - Q^\star]$.
\begin{lemma}\label{lemma:performance-difference}
\begin{align*}
&\bE_{s_0\sim \rho}(V^{\star} - V^{\pi_k})(s_0) \le\\ &\bE_{s_0\sim\rho, s_{\tau+1} \sim \hP{\pi_k}_{s_\tau,a_\tau}, a_\tau\sim \pi_k(s_\tau)} \sum_{t=0}^{+\infty}\gamma^t ([Q^\star - Q_k](s_t,\pi^\star(s_t))) + ([Q_k - Q^\star](s_t,\pi_k(s_t))).
\end{align*}
where $\hP{\pi_k}$ is defined as $\hP{\pi_k}(s'|s,a)\propto P(s'|s,a)\exp(-\gamma V^{\pi_k}(s'))$.
\end{lemma}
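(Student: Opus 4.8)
The plan is to establish a pointwise recursion for $V^\star - V^{\pi_k}$ along trajectories generated by the pair $(\pi_k, \hP{\pi_k})$ and then unroll it. I would start from the identity $V^\star(s) - V^{\pi_k}(s) = Q^\star(s,\pi^\star(s)) - Q^{\pi_k}(s,\pi_k(s))$, which follows from $V^\star(s)=\max_a Q^\star(s,a)=Q^\star(s,\pi^\star(s))$ and from $V^{\pi_k}(s)=Q^{\pi_k}(s,\pi_k(s))$ (the latter because $\pi_k$ is deterministic). Inserting $Q_k$ at both actions splits this into three pieces,
\begin{align*}
V^\star(s) - V^{\pi_k}(s) &= [Q^\star - Q_k](s,\pi^\star(s)) + \big(Q_k(s,\pi^\star(s)) - Q_k(s,\pi_k(s))\big) \\
&\quad + \big(Q_k(s,\pi_k(s)) - Q^{\pi_k}(s,\pi_k(s))\big),
\end{align*}
and since $\pi_k(s) = \argmax_a Q_k(s,a)$ the middle term is non-positive and may be dropped. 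Writing $Q_k - Q^{\pi_k} = (Q_k - Q^\star) + (Q^\star - Q^{\pi_k})$ in the last term then gives
\begin{align*}
V^\star(s) - V^{\pi_k}(s) &\le [Q^\star - Q_k](s,\pi^\star(s)) + [Q_k - Q^\star](s,\pi_k(s)) + [Q^\star - Q^{\pi_k}](s,\pi_k(s)).
\end{align*}

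The crux of the argument, and the step I expect to be the main obstacle, is controlling the residual $[Q^\star - Q^{\pi_k}](s,\pi_k(s))$ by a one-step look-ahead under the adversarial transition $\hP{\pi_k}$. Since $Q^\star$ and $Q^{\pi_k}$ share the same reward, $[Q^\star - Q^{\pi_k}](s,a) = \gamma\big(\sigma(\Psa, V^{\pi_k}) - \sigma(\Psa, V^\star)\big)$, so everything reduces to comparing the two risk measures. Here I would invoke the dual representation theorem together with \eqref{eq:hP-pi}: because $\hP{\pi_k}_{s,a}$ is exactly the maximizer in the dual representation for $V^{\pi_k}$, one has the equality $\sigma(\Psa, V^{\pi_k}) = -\bE_{s'\sim\hP{\pi_k}_{s,a}}V^{\pi_k}(s') - D(\hP{\pi_k}_{s,a},\Psa)$, whereas $\hP{\pi_k}_{s,a}$ is merely feasible for $V^\star$, giving the inequality $\sigma(\Psa, V^\star) \ge -\bE_{s'\sim\hP{\pi_k}_{s,a}}V^\star(s') - D(\hP{\pi_k}_{s,a},\Psa)$. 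Subtracting cancels the penalty and leaves $\sigma(\Psa, V^{\pi_k}) - \sigma(\Psa, V^\star) \le \bE_{s'\sim\hP{\pi_k}_{s,a}}[V^\star(s') - V^{\pi_k}(s')]$, hence $[Q^\star - Q^{\pi_k}](s,\pi_k(s)) \le \gamma\,\bE_{s'\sim\hP{\pi_k}_{s,\pi_k(s)}}[V^\star(s') - V^{\pi_k}(s')]$. This is precisely where robustness enters and where the non-nominal model $\hP{\pi_k}$ (rather than $P$) appears in the final bound.

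Combining the two observations yields the recursion $V^\star(s) - V^{\pi_k}(s) \le e(s) + \gamma\,\bE_{s'\sim\hP{\pi_k}_{s,\pi_k(s)}}[V^\star(s') - V^{\pi_k}(s')]$, where the per-step error is $e(s) := [Q^\star - Q_k](s,\pi^\star(s)) + [Q_k - Q^\star](s,\pi_k(s))$. I would then unroll this inequality along the trajectory with $a_t = \pi_k(s_t)$ and $s_{t+1}\sim\hP{\pi_k}_{s_t,a_t}$, so that the discount factors accumulate as $\gamma^t$ and the recursion produces $V^\star(s_0) - V^{\pi_k}(s_0) \le \bE\sum_{t=0}^{+\infty}\gamma^t e(s_t)$ with the expectation taken over that trajectory; taking a further expectation over $s_0\sim\rho$ gives exactly the claimed bound. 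The only loose end to verify is that the tail of the unrolled recursion vanishes, which follows from $V^\star$ and $V^{\pi_k}$ being uniformly bounded (Assumption \ref{assump:positivity} and $r\in[0,1]$) together with $\gamma<1$, so that $\gamma^T\bE[V^\star(s_T)-V^{\pi_k}(s_T)]\to 0$.
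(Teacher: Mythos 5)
Your proposal is correct and follows essentially the same route as the paper's proof: the same three-term decomposition of $V^\star - V^{\pi_k}$ into $[Q^\star - Q_k](s,\pi^\star(s)) + [Q_k - Q^\star](s,\pi_k(s)) + [Q^\star - Q^{\pi_k}](s,\pi_k(s))$ using the optimality of $\pi_k$ for $Q_k$, followed by the same recursive unrolling under the trajectory generated by $(\pi_k, \hP{\pi_k})$. Your middle step --- bounding $[Q^\star - Q^{\pi_k}](s,a) \le \gamma\,\bE_{s'\sim\hP{\pi_k}_{s,a}}[V^\star - V^{\pi_k}](s')$ by playing optimality of $\hP{\pi_k}$ in the dual representation for $V^{\pi_k}$ against its mere feasibility for $V^\star$ --- is exactly the paper's Lemma \ref{lemma:Bellman-operator-contraction}, which you have simply re-derived inline.
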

\paragraph{Step 2: Bound $[Q^\star - Q_k]$ and $[Q_k - Q^\star]$.} Given Lemma \ref{lemma:performance-difference}, the next step is to further upper-bound $[Q^\star - Q_k]$ and $[Q_k - Q^\star]$. We have the following lemma:
\begin{lemma}\label{lemma:bellman-contraction}
    For any distribution $\nu$ that is admissible,
    \begin{align*}
        \bE_{s,a\sim\nu}(Q_k - Q^\star)(s,a),~~ \bE_{s,a\sim\nu}(Q^\star - Q_k)(s,a) \le \frac{\gamma^k}{1-\gamma} + C\sum_{m=1}^k\gamma^{k-m}\|Q_m - \cT_Q Q_{m-1}\|_{1,\mu},
    \end{align*}
    where $C$ is defined in Assumption \ref{assump:concentrability}.
\end{lemma}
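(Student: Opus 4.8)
The plan is to establish both inequalities through a single telescoping recursion that peels off one ``Bellman-error'' term $\delta_m := Q_m - \Tq Q_{m-1}$ per iteration, while propagating the residual $Q_k-Q^\star$ (resp. $Q^\star-Q_k$) along a chain of admissible distributions so that the concentrability Assumption \ref{assump:concentrability} can be applied at each stage. Writing $\Delta_m := Q_m - Q^\star$ and using $Q^\star = \Tq Q^\star$, I first split $\Delta_k = \delta_k + (\Tq Q_{k-1} - \Tq Q^\star)$, so that for any distribution $\nu$,
\[
\bE_{s,a\sim\nu}\Delta_k(s,a) = \bE_{s,a\sim\nu}\delta_k(s,a) + \bE_{s,a\sim\nu}[\Tq Q_{k-1}-\Tq Q^\star](s,a).
\]
If $\nu$ is admissible, the first term is at most $C\|\delta_k\|_{1,\mu}$, and the task reduces to controlling the one-step operator difference.

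For the operator difference I would invoke the dual representation. With $V_{k-1}(s)=\max_a Q_{k-1}(s,a)$, $V^\star(s)=\max_a Q^\star(s,a)$, and $\Tq Q(s,a)=r(s,a)-\gamma\sigma(P_{s,a},V_Q)$ for the entropy risk measure $\sigma$, one has $[\Tq Q_{k-1}-\Tq Q^\star](s,a)=\gamma\bigl(\sigma(P_{s,a},V^\star)-\sigma(P_{s,a},V_{k-1})\bigr)$. Let $\hP\star_{s,a}$ be the dual minimizer of $\sigma(P_{s,a},V^\star)$ (which coincides with \eqref{eq:hP-star} and lies in the KL-ball). Since $\sigma(P_{s,a},V_{k-1})$ is a supremum over distributions, evaluating it at $\hP\star_{s,a}$ gives the lower bound $\sigma(P_{s,a},V_{k-1})\ge -\bE_{s'\sim\hP\star_{s,a}}V_{k-1}(s')-D(\hP\star_{s,a},P_{s,a})$, whence
\[
\sigma(P_{s,a},V^\star)-\sigma(P_{s,a},V_{k-1}) \le \bE_{s'\sim\hP\star_{s,a}}\bigl(V_{k-1}(s')-V^\star(s')\bigr).
\]
The greedy comparison $V_{k-1}(s')-V^\star(s')\le (Q_{k-1}-Q^\star)(s',\pi_{k-1}(s'))$, obtained from $\pi^\star$ being $Q^\star$-greedy and $\pi_{k-1}$ being $Q_{k-1}$-greedy, then yields
\[
\bE_{s,a\sim\nu}[\Tq Q_{k-1}-\Tq Q^\star](s,a) \le \gamma\,\bE_{s,a\sim\nu,\,s'\sim\hP\star_{s,a}}(Q_{k-1}-Q^\star)(s',\pi_{k-1}(s')) =: \gamma\,\bE_{s,a\sim\nu'}\Delta_{k-1}(s,a),
\]
where $\nu'$ is $\nu$ stepped forward through $\hP\star$ followed by the deterministic greedy policy $\pi_{k-1}$. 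The reverse inequality for $Q^\star-Q_k$ is symmetric, using the dual minimizer of $\sigma(P_{s,a},V_{k-1})$ in place of $\hP\star_{s,a}$ and $\pi^\star$ in place of $\pi_{k-1}$.

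Iterating the display $k$ times produces $\bE_{s,a\sim\nu}\Delta_k \le \sum_{m=1}^{k}\gamma^{k-m}\bE_{s,a\sim\nu_{k-m}}\delta_m + \gamma^k\,\bE_{s,a\sim\nu_k}\Delta_0$, where each intermediate $\nu_j$ arises from $\nu$ by repeatedly stepping forward through transitions in the uncertainty set and greedy (deterministic) policies. The crucial point — and the step I expect to be the main obstacle — is checking that every $\nu_j$ remains robustly admissible, which reduces to verifying that $\hP\star_{s,a}$ and the analogous minimizer for $V_{k-1}$ genuinely lie in the KL-uncertainty set defining robust admissibility; this is exactly where Assumption \ref{assump:positivity} (equivalently $0\le V\le \tfrac{1}{1-\gamma}$) is used, to bound $\KL(\hP\star_{s,a}\|P_{s,a})$. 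Granting admissibility, concentrability gives $\bE_{\nu_{k-m}}\delta_m\le C\|\delta_m\|_{1,\mu}=C\|Q_m-\Tq Q_{m-1}\|_{1,\mu}$, while the base term is controlled using $Z_0=1\Rightarrow Q_0=r$ together with $0\le Q^\star\le \tfrac{1}{1-\gamma}$, so that $\bE_{\nu_k}\Delta_0\le \|Q_0-Q^\star\|_\infty\le \tfrac{1}{1-\gamma}$. Assembling these bounds gives the claimed estimate, and the identical argument applied to $Q^\star-Q_k$ finishes the proof.
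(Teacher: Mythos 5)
Your proposal is correct and follows essentially the same route as the paper: the same decomposition $Q_k - Q^\star = (Q_k - \cT_Q Q_{k-1}) + (\cT_Q Q_{k-1} - \cT_Q Q^\star)$, with your inline dual-representation bound (evaluating the supremum defining $\sigma(P_{s,a},V_{k-1})$ at the minimizer $\hP\star_{s,a}$) being exactly the paper's Lemma \ref{lemma:Bellman-operator-contraction}, followed by the same push-forward-admissibility plus concentrability argument and the same telescoping, which the paper phrases as induction on $k$. The only cosmetic differences are that you step forward with the greedy policy $\pi_{k-1}$ where the paper uses the action $\argmax_{a'}(Q_{k-1}-Q^\star)(s',a')$, and that the admissibility of the stepped-forward distributions --- the point you rightly flag as needing a KL-radius verification --- is simply asserted without proof in the paper as well.
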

The upper bound consists of two parts. The first part is caused by the contraction mapping of Bellman operator, and the second part captures the error of replacing the Bellman operation $\cT_Q Q_{m-1}$ with its approximation $Q_m$.
\paragraph{Step 3: Bound $\|Q_k - \cT_Q Q_{k-1}\|_{1,\mu} $.} According to Lemma \ref{lemma:bellman-contraction}, to bound  $[Q^\star - Q_k]$ and $[Q_k - Q^\star]$, we need to have a better understanding of $\|Q_k - \cT_Q Q_{k-1}\|_{1,\mu} $.
\begin{lemma}\label{lemma:bellman-error-decomposition}
\begin{align*}
    \|Q_k - \cT_Q Q_{k-1}\|_{1,\mu}\le \gamma\beta^{-1}e^{\frac{\beta}{1-\gamma}} \left(\|(\hatTzf -\Tzf) Z_{k-1}\|_{1,\mu} + \|(\Tzf - \Tz)Z_{k-1}\|_{1,\mu}\right).    
\end{align*}
\end{lemma}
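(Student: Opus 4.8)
The plan is to reduce the Bellman residual on the $Q$-side to a residual on the $Z$-side, where the empirical and projected operators actually live, and then pay a single Lipschitz price for passing through the logarithm. First I would rewrite the exact $Q$-Bellman update in terms of $\Tz$. Substituting $Q_{k-1}(s',a') = r(s',a') - \gamma\beta^{-1}\log Z_{k-1}(s',a')$ into the definition of $\Tq$, the inner expectation is exactly $\Etrans e^{-\beta\max_{a'}Q_{k-1}(s',a')} = [\Tz Z_{k-1}](s,a)$, so that $[\Tq Q_{k-1}](s,a) = r(s,a) - \gamma\beta^{-1}\log[\Tz Z_{k-1}](s,a)$. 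Since $Q_k(s,a) = r(s,a) - \gamma\beta^{-1}\log Z_k(s,a)$ and the algorithm sets $Z_k = \hatTzf Z_{k-1}$, the reward terms cancel and I obtain the clean identity
\[
Q_k(s,a) - [\Tq Q_{k-1}](s,a) = \gamma\beta^{-1}\left(\log[\Tz Z_{k-1}](s,a) - \log[\hatTzf Z_{k-1}](s,a)\right).
\]

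Next I would control the logarithm by its Lipschitz constant on the admissible range of $Z$-values. Assumption \ref{assump:positivity} gives $e^{-\frac{\beta}{1-\gamma}} \le \hatTzf Z_{k-1} \le 1$ directly, since $\hatTzf Z_{k-1}\in\cF$. The supporting step is to verify that $\Tz Z_{k-1}$ lies in the \emph{same} interval, which does not follow from Assumption \ref{assump:positivity} (that only constrains members of $\cF$) and must be argued by range-preservation of $\Tz$: if $Z_{k-1}\in[e^{-\frac{\beta}{1-\gamma}},1]$, then $r(s',a') - \gamma\beta^{-1}\log Z_{k-1}(s',a')\in[0,\tfrac{1}{1-\gamma}]$, so its maximum over $a'$ negated, scaled by $\beta$, exponentiated, and averaged lands back in $[e^{-\frac{\beta}{1-\gamma}},1]$. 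On this interval $\log$ is Lipschitz with constant $e^{\frac{\beta}{1-\gamma}}$, because $\tfrac{d}{dx}\log x = 1/x \le e^{\frac{\beta}{1-\gamma}}$. Taking $\bE_{s,a\sim\mu}|\cdot|$ of the identity then yields
\[
\|Q_k - \Tq Q_{k-1}\|_{1,\mu} \le \gamma\beta^{-1}e^{\frac{\beta}{1-\gamma}}\,\|\hatTzf Z_{k-1} - \Tz Z_{k-1}\|_{1,\mu}.
\]

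Finally I would split the $Z$-residual by inserting the projected operator $\Tzf Z_{k-1}$ and applying the triangle inequality in the $\|\cdot\|_{1,\mu}$ norm, producing the two claimed terms: $\|(\hatTzf - \Tzf)Z_{k-1}\|_{1,\mu}$, the statistical error of replacing the population loss $\cL$ by the empirical loss $\hcL$, and $\|(\Tzf - \Tz)Z_{k-1}\|_{1,\mu}$, the function-approximation error governed by Assumption \ref{assump:approx-completeness}. This split happens after the logarithm has already been removed, so it needs nothing beyond the triangle inequality. The only genuinely delicate point in the argument is the range verification for $\Tz Z_{k-1}$, which is precisely what licenses the Lipschitz constant $e^{\frac{\beta}{1-\gamma}}$; everything else is the exact cancellation of the reward terms and two routine applications of the triangle inequality.
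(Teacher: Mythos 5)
Your proposal is correct and follows essentially the same route as the paper's proof: rewrite $Q_k$ and $\Tq Q_{k-1}$ in terms of $Z_k = \hatTzf Z_{k-1}$ and $\Tz Z_{k-1}$ so the reward terms cancel, bound the log-difference via the bound $|\log x - \log y| \le |x-y|/\min\{x,y\}$ on $[e^{-\beta/(1-\gamma)},1]$, then insert $\Tzf Z_{k-1}$ and apply the triangle inequality. If anything, you are slightly more careful than the paper, which invokes $\min\{Z_k,\Tz Z_{k-1}\}\ge e^{-\beta/(1-\gamma)}$ without spelling out the range-preservation argument for $\Tz Z_{k-1}$ that you verify explicitly.
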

Lemma \ref{lemma:bellman-error-decomposition} suggests that the error $ \|Q_k - \cT_Q Q_{k-1}\|_{1,\mu}$ can be bounded by two parts, the first part $\|(\Tzf - \Tz)Z_{k-1}\|_{1,\mu}$ is the error caused by function approximation, i.e., replacing the Bellman operator with the project Bellman operator. The second part is the error of replacing the projected Bellman operator with the empirical projected Bellman operator.
\paragraph{Step 4: Bound $\|(\hatTzf -\Tzf) Z_{k-1}\|_{1,\mu} $ and $\|(\Tzf - \Tz)Z_{k-1}\|_{1,\mu}$.}
The last step closes the proof by bounding $\|(\hatTzf -\Tzf) Z_{k-1}\|_{1,\mu} $ and $\|(\Tzf - \Tz)Z_{k-1}\|_{1,\mu}$ which shows up on the right hand side of Lemma \ref{lemma:bellman-error-decomposition}. 

\begin{lemma}\label{lemma:empirical-loss-estimate}
\begin{equation*}
    \|(\Tzf - \Tz)Z_{k-1}\|_{2,\mu}\le \epsilon_c.
\end{equation*}
With probability at least $1-\delta$,
    \begin{align*}
        \|(\hatTzf -\Tzf) Z_{k-1}\|_{2,\mu}&\le 4\sqrt{\frac{2\log(|\cF|)}{N}} + 5\sqrt{\frac{2\log(8/\delta)}{N}}
    \end{align*}
\end{lemma}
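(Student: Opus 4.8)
The two displayed inequalities are proved separately: the first is essentially definitional, while the second is the substantive statistical estimate. For the first, note that by construction $\Tzf Z_{k-1}=\argmin_{Z'\in\cF}\|Z'-\Tz Z_{k-1}\|_{2,\mu}$, so the projection error equals the best in-class approximation error, $\|(\Tzf-\Tz)Z_{k-1}\|_{2,\mu}=\inf_{Z'\in\cF}\|Z'-\Tz Z_{k-1}\|_{2,\mu}$. Since every iterate satisfies $Z_{k-1}\in\cF$ (the algorithm starts at $Z_0=1\in\cF$ and each update returns an element of $\cF$), Assumption \ref{assump:approx-completeness} immediately gives $\inf_{Z'\in\cF}\|Z'-\Tz Z_{k-1}\|_{2,\mu}\le\sup_{Z\in\cF}\inf_{Z'\in\cF}\|Z'-\Tz Z\|_{2,\mu}\le\epsilon_c$, which is the claim.

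For the second bound the key structural fact is that $\Tz Z_{k-1}$ is exactly the conditional mean of the regression target. Writing $g(s,a,s'):=\exp(-\beta\max_{a'}(r(s',a')-\gamma\beta^{-1}\log Z_{k-1}(s',a')))$, one has $\Etrans g(s,a,s')=[\Tz Z_{k-1}](s,a)$, so the population loss splits as $\cL(Z',Z_{k-1})=\|Z'-\Tz Z_{k-1}\|_{2,\mu}^2+(\text{noise variance})$ and $\Tzf Z_{k-1}$ is its minimizer over $\cF$. By Assumption \ref{assump:positivity} and $r\in[0,1]$, both $g$ and every $Z'\in\cF$ take values in $[e^{-\beta/(1-\gamma)},1]\subseteq[0,1]$, so all squared-loss terms lie in $[0,1]$. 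Abbreviating $\hat Z:=\hatTzf Z_{k-1}$ and $Z^p:=\Tzf Z_{k-1}$, I would introduce the excess-loss variables $\xi(Z'):=(Z'-g)^2-(Z^p-g)^2=(Z'-Z^p)(Z'+Z^p-2g)$ for $Z'\in\cF$, whose population mean is $\bE[\xi(Z')]=\cL(Z',Z_{k-1})-\cL(Z^p,Z_{k-1})=\|Z'-\Tz Z_{k-1}\|_{2,\mu}^2-\|Z^p-\Tz Z_{k-1}\|_{2,\mu}^2\ge0$ and whose variance satisfies the self-bounding estimate $\mathrm{Var}(\xi(Z'))\le\bE[\xi(Z')^2]\le4\|Z'-Z^p\|_{2,\mu}^2$, using $|Z'+Z^p-2g|\le2$.

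With this in hand I would apply Bernstein's inequality to $\{\xi(Z')\}$ for each fixed $Z'\in\cF$ and union bound over the finite class, so that with probability at least $1-\delta$, simultaneously for all $Z'\in\cF$, the population excess loss exceeds its empirical counterpart by at most $2\|Z'-Z^p\|_{2,\mu}\sqrt{2\log(|\cF|/\delta)/N}+O(\log(|\cF|/\delta)/N)$; the two separate radicals with the constants $4$ and $5$ arise from splitting $\log(|\cF|/\delta)$ into $\log|\cF|+\log(1/\delta)$ and distributing the confidence budget across the handful of events invoked (the $8$ in $\log(8/\delta)$ reflecting this union). Specializing to $Z'=\hat Z$ and using the empirical optimality $\hcL(\hat Z,Z_{k-1})\le\hcL(Z^p,Z_{k-1})$ makes the empirical excess loss nonpositive, leaving the self-referential inequality $\|\hat Z-\Tz Z_{k-1}\|_{2,\mu}^2-\|Z^p-\Tz Z_{k-1}\|_{2,\mu}^2\le2\|\hat Z-Z^p\|_{2,\mu}\sqrt{2\log(|\cF|/\delta)/N}+O(\log(|\cF|/\delta)/N)$. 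The final step is to convert the left-hand side into $\|\hat Z-Z^p\|_{2,\mu}^2$ and solve the resulting quadratic in $x:=\|\hat Z-Z^p\|_{2,\mu}$, which yields $x=O(\sqrt{\log(|\cF|/\delta)/N})$ of the stated form.

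\textbf{Main obstacle.} The crux, and the only place where the geometry of the projection genuinely enters, is converting the excess population loss into the squared distance, i.e. showing $\cL(\hat Z,Z_{k-1})-\cL(Z^p,Z_{k-1})\ge\|\hat Z-Z^p\|_{2,\mu}^2$. Expanding, $\cL(\hat Z)-\cL(Z^p)=\|\hat Z-Z^p\|_{2,\mu}^2+2\langle\hat Z-Z^p,\,Z^p-\Tz Z_{k-1}\rangle_\mu$, so this reduces to the nonnegativity of the cross term, which is the obtuse-angle property of the projection of $\Tz Z_{k-1}$ onto $\cF$. This step is what both produces the \emph{fast} $N^{-1/2}$ rate (through the variance-controlled Bernstein bound rather than a cruder Hoeffding argument, which would only deliver $N^{-1/4}$) and keeps the completeness error $\epsilon_c$ out of this term; the remaining pieces, namely the Bernstein concentration, the union bound over $\cF$, and the quadratic solve, are routine once boundedness from Assumption \ref{assump:positivity} is in place.
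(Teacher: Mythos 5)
Your proof of the first inequality is exactly the paper's: $\Tzf Z_{k-1}$ attains $\inf_{Z'\in\cF}\|Z'-\Tz Z_{k-1}\|_{2,\mu}$, which Assumption \ref{assump:approx-completeness} bounds by $\epsilon_c$. For the second inequality, however, the paper does not re-derive any concentration: it checks that the squared loss is bounded by $1$, that every $Z\in\cF$ is bounded by $1$ (Assumption \ref{assump:positivity}), and that the loss is $2$-Lipschitz in $Z$, and then invokes the least-squares generalization bound of \cite{panaganti2022robust} (their Lemma 3, eq.~(12)) with $c_1=c_2=1$, $c_3=2$; the constants $4$, $5$ and the $\log(8/\delta)$ are inherited verbatim from that citation. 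You instead attempt to reprove the underlying concentration from scratch via excess-loss variables, Bernstein's inequality, a union bound over $\cF$, empirical optimality, and a quadratic solve. That is the right machinery (it is essentially what sits inside the cited lemma), but your execution of the step you yourself call the crux is flawed.

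The gap: you justify $\cL(\hat Z,Z_{k-1})-\cL(Z^p,Z_{k-1})\ge\|\hat Z-Z^p\|_{2,\mu}^2$ (with $\hat Z=\hatTzf Z_{k-1}$, $Z^p=\Tzf Z_{k-1}$) by the ``obtuse-angle property of the projection,'' i.e.\ nonnegativity of the cross term $\langle \hat Z-Z^p,\,Z^p-\Tz Z_{k-1}\rangle_\mu$. That property is a theorem about projections onto \emph{convex} sets. Here $\cF$ is a finite class --- the very bound you are proving carries $\log|\cF|$ --- hence non-convex, and the $L^2(\mu)$-projection onto it satisfies no such variational inequality: the cross term can be negative, with magnitude up to $\|Z^p-\Tz Z_{k-1}\|_{2,\mu}\,\|\hat Z-Z^p\|_{2,\mu}\le \epsilon_c\|\hat Z-Z^p\|_{2,\mu}$, so the best you can salvage is $\cL(\hat Z,Z_{k-1})-\cL(Z^p,Z_{k-1})\ge\|\hat Z-Z^p\|_{2,\mu}^2-2\epsilon_c\|\hat Z-Z^p\|_{2,\mu}$. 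The standard repair is to center the excess loss at the conditional mean instead of the in-class projection, i.e.\ take $\xi(Z')=(Z'-g)^2-(\Tz Z_{k-1}-g)^2$: then one has the exact identity $\bE[\xi(Z')]=\|Z'-\Tz Z_{k-1}\|_{2,\mu}^2$ (no cross term, because $\Tz Z_{k-1}$ is the Bayes regression function) together with the self-bounding variance $\mathrm{Var}(\xi(Z'))\le 4\,\bE[\xi(Z')]$, which is what lets Bernstein deliver the fast $N^{-1/2}$ rate; but empirical optimality must then be run against the best in-class approximant of $\Tz Z_{k-1}$, and converting back to $\|\hat Z-Z^p\|_{2,\mu}$ by the triangle inequality reintroduces $\epsilon_c$. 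Either way, your route yields the second inequality only with an extra additive $O(\epsilon_c)$ term. That weaker statement still suffices for Theorem \ref{theorem:sample-complexity}, where the two error terms are summed anyway (and the paper itself glosses over precisely this point by deferring to the citation), but it is not the inequality as stated, and the convexity step as you wrote it is false.
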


Combining the four steps finishes the proof.

\subsection{Proof of Lemmas in Proof Sketches}
In this section we define the following operator $\Tvq:\bR^{|\cS|}\to \bR^{|\cS|\times|\cA|}$ for notational simplicity: 
\begin{align}\label{eq:Tvq}
    [\Tvq V](s,a) = r(s,a) - \gamma\beta^{-1}\log \bE_{s'\sim P(\cdot|s,a)} e^{-\beta V(s')}
\end{align}

\begin{proof}[Proof of Lemma \ref{lemma:performance-difference}]
Define $V_k(s):= \max_a Q_k(s,a)$, then
    \begin{align*}
        &\quad (V^{\star} - V^{\pi_k})(s_0)  = (V^{\star} - V_k)(s_0) + (V_k - V^{\pi_k})(s_0)\\
        &= (Q^\star(s_0,\pi^\star(s_0)) - Q_k(s_0,\pi_k(s_0)))  +  (Q_k(s_0,\pi_k(s_0)) - Q^{\pi_k}(s_0,\pi_k(s_0)))\\
        &\le (Q^\star(s_0,\pi^\star(s_0)) - Q_k(s_0,\pi^\star(s_0))) + (Q_k(s_0,\pi_k(s_0)) - Q^\star(s_0,\pi_k(s_0))) + (Q^\star(s_0,\pi_k(s_0)) - Q^{\pi_k}(s_0,\pi_k(s_0)))\\
        &\le ([Q^\star - Q_k](s_0,\pi^\star(s_0))) + ([Q_k - Q^\star](s_0,\pi_k(s_0))) + [\Tvq(V^\star - V^{\pi_k})](s_0,  \pi_k(s_0))\\
        &\stackrel{\textup{Lemma \ref{lemma:Bellman-operator-contraction}}}{\le} \gamma\bE_{s_1\sim\hP{\pi_k}_{s_0,a_0}, a_0 \sim \pi_k(s_0)}{} [V^\star - V^{\pi_k}](s_1)+ ([Q^\star - Q_k](s_0,\pi^\star(s_0))) + ([Q_k - Q^\star](s_0,\pi_k(s_0)))
    \end{align*}
Apply the above inequality recursively, and we get:
\begin{align*}
    \bE_{s_0\sim \rho}(V^{\star} - V^{\pi_k})(s_0) &\le \bE_{s_0\sim\rho, s_{\tau+1} \sim \hP{\pi_k}_{s_\tau,a_\tau}, a_\tau\sim \pi_k(s_\tau)} \sum_{t=0}^{+\infty}\gamma^t ([Q^\star - Q_k](s_t,\pi^\star(s_t))) + ([Q_k - Q^\star](s_t,\pi_k(s_t))).
\end{align*}
\end{proof}

    \begin{proof}[Proof of Lemma \ref{lemma:bellman-contraction}]
    We use proof by induction. The inequality naturally holds for $k=0$. Assume that it holds for $k-1$, then
    \begin{align*}
        \bE_{s,a\sim\nu}(Q_k - Q^\star)(s,a) &= \bE_{s,a\sim\nu}[Q_k - \cT_Q Q_{k-1}](s,a) + \bE_{s,a\sim\nu}[\cT_Q(Q_{k-1} - Q^\star)](s,a)\\
        &\stackrel{\textup{Lemma \ref{lemma:Bellman-operator-contraction}}}{\le} \bE_{s,a\sim\nu}[Q_k - \cT_Q Q_{k-1}](s,a) + \gamma \bE_{s,a\sim\nu}\bE_{s'\sim \hP{\star}_{s,a}} \max_{a'}(Q_{k-1} - Q^\star)(s',a')\\
        & = \bE_{s,a\sim\nu}[Q_k - \cT_Q Q_{k-1}](s,a) + \gamma \bE_{s',a'\sim\nu'} (Q_{k-1} - Q^\star)(s',a'),
    \end{align*}
    where $\nu'$ is the marginal distribution on $(s',a')$ given the joint distribution on $(s,a,s',a')$ by $s,a\sim \nu, s'\sim \hP{\star}_{s,a}, a' = \argmax_{a'} (Q_{k-1} - Q^\star)(s',a')$. Since $\nu$ is admissible, $\nu'$ is also admissible. Thus from the induction assumption, we have
    \begin{align*}
        \bE_{s,a\sim\nu}(Q_k - Q^\star)(s,a) &\le \bE_{s,a\sim\nu}[Q_k - \cT_Q Q_{k-1}](s,a) + \gamma \left(\frac{\gamma^{k-1}}{1-\gamma} + C\sum_{m=1}^{k-1}\gamma^{k-1-m}\|Q_m - \cT_Q Q_{m-1}\|_{1,\mu}\right)\\
        &\le \frac{\gamma^k}{1-\gamma} + C\sum_{m=1}^k\gamma^{k-m}\|Q_m - \cT_Q Q_{m-1}\|_{1,\mu}
    \end{align*}
    \end{proof}

\begin{proof}[Proof of Lemma \ref{lemma:bellman-error-decomposition}] From the definition of $Q_k$, we have
    \begin{align*}
        Q_k(s,a) &= r(s,a) - \gamma \beta^{-1}\log Z_k(s, a)\\
        \cT_Q Q_{k-1}(s,a) &= r(s,a) -\gamma \beta^{-1}\log \bE_{s'\sim P(\cdot|s,a)} e^{-\beta \max_{a'}Q_{k-1}(s',a')}\\
        &=r(s,a) - \gamma\beta^{-1}\log \bE_{s'\sim P(\cdot|s,a)} e^{-\beta\max_{a'}\left(r(s',a') - \gamma\beta^{-1}\log Z_{k-1}(s', a')\right)}\\
        &= r(s,a) - \gamma \beta^{-1}\log([\Tz Z_{k-1}](s,a)).\\
    \Longrightarrow~~ \|Q_k - \cT_Q Q_{k-1}\|_{1,\mu}&= \gamma\beta^{-1}\|\log Z_k - \log \Tz Z_{k-1}\|_{1,\mu}\\
    &\stackrel{\textup{Lemma \ref{lemma:log}}}{\le} \frac{\gamma\beta^{-1}}{\min_{s,a}\min\{Z_k(s,a), \Tz Z_
    {k-1}(s,a)\}}\|Z_k - \Tz Z_{k-1}\|_{1,\mu}\\
    &\le \gamma\beta^{-1}e^{\frac{\beta}{1-\gamma}}\|\hatTzf Z_{k-1}  - \Tz Z_{k-1}\|_{1,\mu}\\
    &\le \gamma\beta^{-1}e^{\frac{\beta}{1-\gamma}} \left(\|(\hatTzf -\Tzf) Z_{k-1}\|_{1,\mu} + \|(\Tzf - \Tz)Z_{k-1}\|_{1,\mu}\right).
    \end{align*}
\end{proof}

    \begin{proof}[Proof of Lemma \ref{lemma:empirical-loss-estimate}]
    The first inequality can be obtained by the approximate completeness assumption (Assumption \ref{assump:approx-completeness}).
\begin{align*}
     &\sup_{Z\in \cF}\inf_{Z'\in\cF}\|Z' - \Tz Z\|_{2,\mu} \le \epsilon_c\\
     \Longrightarrow~~& \inf_{Z'\in\cF}\|Z' - \Tz Z_{k-1}\|_{2,\mu} \le \epsilon_c\\
     \Longrightarrow~~&\textup{LHS} = \|\hatTzf Z_{k-1} - \Tz Z_{k-1}\|_{2,\mu}\le \epsilon_c
\end{align*}
       The second inequality can be obtained by applying Lemma 3, eq(12) in \cite{panaganti2022robust}. Here we can set $l(Z, (s,a,s')) = \left( Z(s,a) - \exp{\left(-\beta\gamma\max_{a'} (r(s',a') - \beta^{-1}\log Z_{k-1}(s',a'))\right)}\right)^2$. Then it is not hard to verify that $l(Z, (s,a,s')) \le 1$, $l(Z, (s,a,s'))$ is $2$-Lipschitz in $Z$ and that $|Z(s,a)|\le 1$, thus we can set $c_1 = 1, c_2 = 1, c_3 = 2$ in eq(12) in \cite{panaganti2022robust} and obtain that with probability at least $1-\delta$,
       \begin{align*}
       &\bE_{s,a\sim\mu} \Etrans  \left[l(\hatTzf Z_{k-1}, (s,a,s'))  -   l(\Tzf Z_{k-1}, (s,a,s'))\right] \le 4\sqrt{\frac{2\log(|\cF|)}{N}} + 5\sqrt{\frac{2\log(8/\delta)}{N}}\\
       \Longrightarrow ~~ &\cL(\hatTzf Z_{k-1}, Z_{k-1}) -  \cL(\Tzf Z_{k-1}, Z_{k-1}) \\
       &= \bE_{s,a\sim\mu}\left( \hatTzf Z_{k-1}(s,a) - \Etrans e^{-\beta\gamma\max_{a'} (r(s',a') - \beta^{-1}\log Z(s',a'))}\right)^2 \\
       &= \|(\hatTzf -\Tzf) Z_{k-1}\|_{2,\mu} \le4\sqrt{\frac{2\log(|\cF|)}{N}} + 5\sqrt{\frac{2\log(8/\delta)}{N}}
       \end{align*}
    \end{proof}

\subsection{Proof of Theorem \ref{theorem:sample-complexity}}
\begin{proof}[Proof of Theorem \ref{theorem:sample-complexity}]
\begin{align*}
    &\quad \bE_{s_0\sim \rho}(V^{\star} - V^{\pi_K})(s_0)\\ &\stackrel{\textup{Lemma~\ref{lemma:performance-difference}}}{=} \bE_{s_0\sim\rho, s_{\tau+1} \sim \hP{\pi_K}_{s_\tau,a_\tau}, a_\tau\sim \pi_K(s_\tau)} \sum_{t=0}^{+\infty}\gamma^t ([Q^\star - Q_K](s_t,\pi^\star(s_t))) + ([Q_K - Q^\star](s_t,\pi_K(s_t)))\\
    &\stackrel{\textup{Lemma~\ref{lemma:bellman-contraction}}}{\le} 2\sum_{t=0}^{+\infty}\gamma^t \left(\frac{\gamma^K}{1-\gamma} + C\sum_{k=1}^K\gamma^{K-k}\|Q_k - \cT_Q Q_{k-1}\|_{1,\mu}\right)\\
    &= \frac{2\gamma^K}{(1-\gamma)^2} + \frac{2C}{1-\gamma}\sum_{k=1}^K\gamma^{K-k}\|Q_k - \cT_Q Q_{k-1}\|_{1,\mu}\\
    &\stackrel{\textup{Lemma~\ref{lemma:bellman-error-decomposition}}}{\le}\frac{2\gamma^K}{(1-\gamma)^2} + \frac{2C}{1-\gamma}\gamma\beta^{-1}e^{\frac{\beta}{1-\gamma}}\sum_{k=1}^K\gamma^{K-k} \left(\|(\hatTzf -\Tzf) Z_{k-1}\|_{1,\mu} + \|(\Tzf - \Tz)Z_{k-1}\|_{1,\mu}\right)\\
    &\le \frac{2\gamma^K}{(1-\gamma)^2} + \frac{2C}{1-\gamma}\gamma\beta^{-1}e^{\frac{\beta}{1-\gamma}}\sum_{k=1}^K\gamma^{K-k} \left(\|(\hatTzf -\Tzf) Z_{k-1}\|_{2,\mu} + \|(\Tzf - \Tz)Z_{k-1}\|_{2,\mu}\right)\\
    &\stackrel{\textup{Lemma~\ref{lemma:empirical-loss-estimate}}}{\le} \frac{2\gamma^K}{(1-\gamma)^2} + \frac{2C}{1-\gamma}\gamma\beta^{-1}e^{\frac{\beta}{1-\gamma}}\sum_{k=1}^K\gamma^{K-k} \left(4\sqrt{\frac{2\log(|\cF|)}{N}} + 5\sqrt{\frac{2\log(8/\delta)}{N}} +\epsilon_c\right)\\
    &\le \frac{2\gamma^K}{(1-\gamma)^2} + \gamma\beta^{-1}e^{\frac{\beta}{1-\gamma}}\frac{2C}{(1-\gamma)^2}\left(4\sqrt{\frac{2\log(|\cF|)}{N}} + 5\sqrt{\frac{2\log(8/\delta)}{N}} +\epsilon_c\right)
\end{align*}

\end{proof}

\section{Auxiliaries}

\begin{lemma}\label{lemma:Q-smoothness}
    \begin{equation*}
        |Q^{\pi'}(s_0, a_0) - Q^{\pi}(s_0,a_0)|\le \frac{1}{(1-\gamma)^2}\max_s \|\pi'_s - \pi_s\|_1,
    \end{equation*}
    where $\pi_s$ denotes $\pi(\cdot|s) \in \Delta^{|\cS|}$ and $ \|\pi'_s - \pi_s\|_1 = \sum_a |\pi'(a|s) - \pi(a|s)|$.
\begin{proof}
\begin{align*}
    Q^{\pi'}(s_0, a_0) &= \min_{\{\widehat P_t\}_{t\ge1}}\bE_{s_t,a_t\sim \pi',\widehat P} \sum_{t=0}^{+\infty} \gamma^t \left(r(s_t,a_t) + \gamma d(\widehat{P}_{t;s_t,a_t}, P_{s_t, a_t}) \right)\\
    &\le \bE_{s_t,a_t\sim\pi',\hP\pi} \sum_{t=0}^{+\infty} \gamma^t \left(r(s_t,a_t) + \gamma d(\widehat{P}_{t;s_t,a_t}, P_{s_t, a_t}) \right).\\
    Q^\pi(s_0, a_0) &= \bE_{s_t,a_t\sim\pi,\hP\pi}\sum_{t=0}^{+\infty} \gamma^t \left(r(s_t,a_t) + \gamma d(\widehat{P}_{t;s_t,a_t}, P_{s_t, a_t}) \right)
\end{align*}
Thus
\begin{align*}
    & Q^{\pi'}(s_0, a_0) - Q^{\pi}(s_0,a_0) \le\bE_{s_t,a_t\sim\pi',\hP\pi} \sum_{t=0}^{+\infty} \gamma^t \left(r(s_t,a_t) + \gamma d(\widehat{P}_{t;s_t,a_t}, P_{s_t, a_t}) \right)\\
    &\qquad\qquad\qquad\qquad\qquad\quad-\bE_{s_t,a_t\sim\pi,\hP\pi} \sum_{t=0}^{+\infty} \gamma^t \left(r(s_t,a_t) + \gamma d(\widehat{P}_{t;s_t,a_t}, P_{s_t, a_t}) \right)\\
    &= \bE_{s_t\sim \pi',\widehat P} \sum_{t=0}^{+\infty} \gamma^t \sum_a (\pi'(a|s) - \pi(a|s)) Q^\pi(s,a) \quad \textup{(by Lemma \ref{lemma:performance-difference-soft-MDP})}\\
    &\le \frac{1}{1-\gamma}\bE_{s_t\sim \pi',\widehat P} \sum_{t=0}^{+\infty} \gamma^t \sum_a |\pi'(a|s) - \pi(a|s)|\\
    &\le \frac{1}{(1-\gamma)^2} \max_s\|\pi_s' - \pi_s\|_1
\end{align*}
\end{proof}
\end{lemma}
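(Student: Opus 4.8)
The plan is to reduce this robust-policy smoothness estimate to a single application of the Performance Difference Lemma by \emph{aligning the inner transition kernels} of the two Q-functions. The obstacle is that, by Theorem~\ref{theorem:dual-representation}, $Q^{\pi'}$ and $Q^\pi$ are each evaluated under their own minimizing kernel, $\hP{\pi'}$ and $\hP{\pi}$ respectively, so the trajectories underlying them differ in both the policy \emph{and} the transition model; PDL therefore cannot be applied to the raw difference.

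First I would write $Q^\pi(s_0,a_0)$ \emph{exactly} as the discounted sum of the augmented per-step cost $r(s_t,a_t)+\gamma D(\hP{\pi}_{s_t,a_t},P_{s_t,a_t})$ along trajectories generated by $(\pi,\hP{\pi})$, using the minimizer characterization \eqref{eq:hP-pi}. Then, exploiting that $Q^{\pi'}$ is an \emph{infimum} over transition kernels, I would upper-bound $Q^{\pi'}(s_0,a_0)$ by substituting the suboptimal (for $\pi'$) kernel $\hP{\pi}$, which yields the same augmented discounted sum but with actions drawn from $\pi'$. The point of this move is that both quantities now live under the identical kernel $\hP{\pi}$ with the identical fixed reward $\tilde r(s,a):=r(s,a)+\gamma D(\hP{\pi}_{s,a},\Psa)$, so their difference is a \emph{standard} (non-robust) policy-value gap.

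With the models aligned, I would invoke the Performance Difference Lemma~\cite{kakade2002approximate} in trajectory form to obtain
\begin{equation*}
Q^{\pi'}(s_0,a_0)-Q^{\pi}(s_0,a_0)\le \bE_{s_t\sim\pi',\hP{\pi}}\sum_{t=0}^{+\infty}\gamma^t\sum_a(\pi'(a|s_t)-\pi(a|s_t))Q^\pi(s_t,a).
\end{equation*}
The rest is routine: since $r\in[0,1]$ and $D\ge 0$ (the dual representation of Theorem~\ref{thm:convex-risk-dual-representation} permits choosing $D\ge-\sigma(0)$, and $\sigma(0)=0$ for the risk measures considered), one has $0\le Q^\pi\le\frac{1}{1-\gamma}$, whence $\sum_a(\pi'-\pi)Q^\pi\le\frac{1}{1-\gamma}\sum_a|\pi'(a|s)-\pi(a|s)|\le\frac{1}{1-\gamma}\max_s\|\pi'_s-\pi_s\|_1$; summing $\sum_t\gamma^t=\frac{1}{1-\gamma}$ then delivers the claimed $\frac{1}{(1-\gamma)^2}\max_s\|\pi'_s-\pi_s\|_1$.

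Finally, to upgrade the one-sided inequality to the absolute value, I would rerun the argument with the roles of $\pi$ and $\pi'$ exchanged, this time bounding $Q^\pi$ from above through the kernel $\hP{\pi'}$ while expressing $Q^{\pi'}$ exactly; because $\max_s\|\pi'_s-\pi_s\|_1$ is symmetric in $\pi,\pi'$, this supplies the matching bound on $Q^\pi-Q^{\pi'}$. The main obstacle is the kernel-alignment step of the second paragraph: recognizing that the infimum structure of the robust value lets us \emph{force} a common transition model is precisely what restores the applicability of PDL, after which the standard discounted-sum estimates close the argument.
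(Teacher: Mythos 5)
Your proposal is correct and takes essentially the same route as the paper's proof: you upper-bound $Q^{\pi'}$ by substituting the (for $\pi'$ suboptimal) minimizing kernel $\widehat{P}^{\pi}$ of $Q^{\pi}$, express $Q^{\pi}$ exactly under that same kernel, apply the Performance Difference Lemma to the resulting standard policy-value gap, and finish with $0\le Q^{\pi}\le\frac{1}{1-\gamma}$ and the geometric series. Your explicit symmetrization to recover the absolute value (and your justification that $D\ge 0$ via $\sigma(0)=0$) merely makes precise steps the paper leaves implicit.
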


\begin{lemma}\label{lemma:projection-property-1}
    For any convex set $\cX\subset \bR^n$ and $x\in \cX, f\in \bR^n, \eta > 0$,
    \begin{equation*}
        \< \projX(x+\eta f), f \> \ge \frac{1}{\eta}\|\projX(x+\eta f) - x\|_2^2. 
    \end{equation*}
    \begin{proof}
        From the definition of projection, for any $y\in \cX$,
        \begin{align*}
        \<y - \projX(x+\eta f), x+\eta f - \projX(x + \eta f)\>\le 0,
        \end{align*}
    Set $y = x$ we get:
    \begin{align*}
          \|\projX(x+\eta f) - x\|_2^2 \le \< \projX(x+\eta f), \eta f \>,
    \end{align*}
    which completes the proof.
    \end{proof}
\end{lemma}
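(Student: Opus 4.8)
The plan is to read the bound straight off the variational (obtuse-angle) characterization of the Euclidean projection onto a closed convex set; this is the only substantive ingredient, and the rest is a one-line expansion. Concretely, for any closed convex $\cX\subseteq\bR^n$ and any point $z\in\bR^n$, the projection $p:=\projX(z)$ is characterized by the first-order optimality condition
\[
\< y-p,\; z-p\>\le 0\qquad\text{for every } y\in\cX .
\]
I would take this as known (it is the defining property of the nearest-point projection).

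First I would specialize to $z=x+\eta f$, so that $p=\projX(x+\eta f)$, and then choose the test point $y=x$. This choice is admissible precisely because the hypothesis supplies $x\in\cX$; selecting the correct feasible point is really the whole content of the argument. The projection inequality then reads $\< x-p,\;(x+\eta f)-p\>\le 0$. Next I would expand, using $(x+\eta f)-p=\eta f-(p-x)$ and $x-p=-(p-x)$, to get
\[
\< -(p-x),\;\eta f-(p-x)\>\;=\;-\eta\<p-x,\,f\>+\|p-x\|_2^2\;\le\; 0 .
\]
Rearranging and dividing by $\eta>0$ yields $\<\projX(x+\eta f)-x,\,f\>\ge \tfrac{1}{\eta}\,\|\projX(x+\eta f)-x\|_2^2$, which is the asserted bound.

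I expect no genuine obstacle: the statement is exactly the ``gradient-mapping''/co-coercivity estimate that is standard in projected-gradient analysis, and it is a two-line consequence of the projection inequality. The only point that must be stated carefully is that $y=x$ is a legal test point, i.e. that $x\in\cX$, which is given. For orientation I would also note that this is precisely the inequality invoked in the proof of Theorem~\ref{theorem:gradient-ascent}: taking $f=\tfrac{1}{1-\gamma}\,d^{\pi^{(k)},\hP{(k)}}(s)\,Q^{(k)}(s,\cdot)$ and $x=\pi^{(k)}_s$, the bound lower-bounds $\<\pi^{(k+1)}_s-\pi^{(k)}_s,\,Q^{(k)}(s,\cdot)\>$ by $\tfrac{1-\gamma}{\eta\, d^{\pi^{(k)},\hP{(k)}}(s)}\,\|\pi^{(k+1)}_s-\pi^{(k)}_s\|_2^2$, confirming that the subtraction of $x$ on the left-hand side is exactly what the downstream application requires.
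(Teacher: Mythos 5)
Your proof is correct and follows essentially the same route as the paper: both invoke the variational characterization $\<y - p,\, (x+\eta f) - p\> \le 0$ of $p = \projX(x+\eta f)$, choose the test point $y = x \in \cX$, and expand. Note that what you actually establish is $\<\projX(x+\eta f) - x,\, f\> \ge \frac{1}{\eta}\|\projX(x+\eta f) - x\|_2^2$, which is the intended statement (it is exactly what the proof of Theorem~\ref{theorem:gradient-ascent} uses, and the literal claim without the ``$-x$'' fails, e.g., for $\cX = \{x\}$ with $\<x,f\> < 0$) --- the lemma as printed drops the ``$-x$'' in the inner product, a typo your version silently corrects.
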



\begin{lemma}\label{lemma:Bellman-operator-contraction}
The operators $\Tq, \Tvq$ defined in \eqref{eq:Tq}, \eqref{eq:Tvq} satisfies
\begin{align*}
    [\Tvq(\oV - V)](s,a) &\le \gamma \hEtrans{} (\oV-V)(s')\\
    [\Tq(\oQ - Q)](s,a) &\le \gamma\hEtrans{} \max_{a'} (\oQ - Q)(s',a'),
\end{align*}
where $\widehat{P}_{s,a}$ is defined as:
\begin{align*}
    \widehat P(s'|s,a) \propto P(s'|s,a) \exp(-\beta V(s')) ~~\textup{ or }~~ \widehat P(s'|s,a) \propto P(s'|s,a) \exp(-\beta \max_{a' }Q(s',a')).
\end{align*}
\begin{proof}
Let $\sigma(\Psa, V):= \beta^{-1}\log\bE_{s'\sim\Psa} e^{-\beta V(s')}$, then from Example \ref{example:entropy-risk-measure} and the dual representation theorem \ref{thm:convex-risk-dual-representation}
\begin{align*}
   \sigma(\Psa, V):= \sup_{\widetilde P_{s,a}} -\bE_{s'\sim\widetilde P_{s,a}} V(s') - \beta^{-1}\KL(\widetilde P_{s,a}||P_{s,a}). 
\end{align*}
\begin{align}
    &\quad [\Tvq(\oV - V)] = -\gamma \left(\sigma(\Psa, \oV) - \sigma(\Psa, V)\right)\label{eq:proof-1}\\
    &=\gamma\left(\inf_{\widetilde P_{s,a}} \left(\bE_{s'\sim\widetilde P_{s,a}} \oV(s') + \beta^{-1}\KL(\widetilde P_{s,a}||P_{s,a})\right)-\inf_{\widetilde P_{s,a}}\left(\bE_{s'\sim\widetilde P_{s,a}} V(s') + \beta^{-1}\KL(\widetilde P_{s,a}||P_{s,a})\right) \right)\notag\\
    &= \gamma\left(\inf_{\widetilde P_{s,a}} \left(\bE_{s'\sim\widetilde P_{s,a}} \oV(s') + \beta^{-1}\KL(\widetilde P_{s,a}||P_{s,a})\right)-\left(\bE_{s'\sim\widehat P_{s,a}} V(s') + \beta^{-1}\KL(\widehat P_{s,a}||P_{s,a})\right) \right)\notag\\
    &\le \gamma \left(\left(\bE_{s'\sim\widehat P_{s,a}} \oV(s') + \beta^{-1}\KL(\widehat P_{s,a}||P_{s,a})\right)-\left(\bE_{s'\sim\widehat P_{s,a}} V(s') + \beta^{-1}\KL(\widehat P_{s,a}||P_{s,a})\right) \right)\notag\\
    &= \gamma \bE_{s'\sim\widehat P_{s,a}} (\oV-V)(s')\label{eq:proof-2}
\end{align}

Similarly,
\begin{align*}
   [\Tq (\oQ - Q)](s,a) &= -\gamma\beta^{-1}\log \bE_{s'\sim P(\cdot|s,a)} e^{-\beta\max_{a'}\oQ(s',a')} + \gamma\beta^{-1}\log \bE_{s'\sim P(\cdot|s,a)} e^{-\beta\max_{a'}Q(s',a')}\\
  &= -\gamma\left(\sigma(\Psa, \max_a Q(\cdot, a)) - \sigma(\Psa, \max_a \oQ(\cdot, a))\right)
\end{align*}
Using the same inequality from \eqref{eq:proof-1} to \eqref{eq:proof-2} we get
\begin{align*}
    -\gamma\left(\sigma(\Psa, \max_a Q(\cdot, a)) - \sigma(\Psa, \max_a \oQ(\cdot, a))\right) &\le \bE_{s'\sim\widehat P_{s,a}}\max_a \left(\oQ(s', a) - \max_a \oQ(s', a)\right)\\
    &\le \bE_{s'\sim\widehat P_{s,a}}\max_a \left(\oQ(s', a) - \oQ(s', a)\right),
\end{align*}
which completes the proof.
\end{proof}
\end{lemma}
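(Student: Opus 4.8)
The plan is to reduce both inequalities to the variational (dual) formula for the entropy risk measure and then exploit a one-sided, inf-versus-inf comparison. First I would set $\sigma(\Psa,V):=\beta^{-1}\log\Etrans e^{-\beta V(s')}$, so that by the definitions \eqref{eq:Tvq} and \eqref{eq:Tq} both operators are affine shifts of $-\gamma\sigma(\Psa,\cdot)$. Reading the bracketed notation as an operator \emph{difference} (the operators are nonlinear, so this is the only sensible interpretation), this gives $[\Tvq(\oV-V)](s,a)=-\gamma\big(\sigma(\Psa,\oV)-\sigma(\Psa,V)\big)$ and, writing $\max_a Q(\cdot,a)$ for the greedy value, $[\Tq(\oQ-Q)](s,a)=-\gamma\big(\sigma(\Psa,\max_a\oQ(\cdot,a))-\sigma(\Psa,\max_a Q(\cdot,a))\big)$.

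Next I would invoke Example \ref{example:entropy-risk-measure} and the Dual Representation Theorem \ref{thm:convex-risk-dual-representation} to write $-\gamma\sigma(\Psa,V)=\gamma\inf_{\widetilde P_{s,a}}\big(\bE_{s'\sim\widetilde P_{s,a}}V(s')+\beta^{-1}\KL(\widetilde P_{s,a}\|\Psa)\big)$, with the infimum attained at the Gibbs tilt $\widehat P(s'|s,a)\propto P(s'|s,a)e^{-\beta V(s')}$ --- precisely the $\widehat P_{s,a}$ appearing in the statement (the standard Donsker--Varadhan optimizer). The crux is then to compare the two infima: writing $[\Tvq(\oV-V)](s,a)=\gamma\big(\inf_{\widetilde P}(\bE_{\widetilde P}\oV+\beta^{-1}\KL)-\inf_{\widetilde P}(\bE_{\widetilde P}V+\beta^{-1}\KL)\big)$, I would upper-bound the first infimum by plugging in the minimizer $\widehat P$ of the second infimum (a feasible but generally suboptimal choice), while keeping the second infimum at its exact optimal value. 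The two $\KL(\widehat P\|\Psa)$ terms then cancel, leaving $[\Tvq(\oV-V)](s,a)\le\gamma\bE_{s'\sim\widehat P_{s,a}}(\oV-V)(s')=\gamma\,\hEtrans{}(\oV-V)(s')$, which is the first claim.

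For the $\Tq$ inequality I would run the identical argument with $V=\max_a Q(\cdot,a)$ and $\oV=\max_a\oQ(\cdot,a)$ (so that $\widehat P$ is the tilt by $\max_a Q$), yielding $[\Tq(\oQ-Q)](s,a)\le\gamma\bE_{s'\sim\widehat P_{s,a}}\big(\max_a\oQ(s',a)-\max_a Q(s',a)\big)$, and then close with the elementary pointwise bound $\max_a\oQ(s',a)-\max_a Q(s',a)\le\max_{a'}(\oQ-Q)(s',a')$.

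The main obstacle --- really the only delicate point --- is the asymmetric use of $\widehat P$: it serves as a \emph{feasible} point for the $\oV$ (resp.\ $\max_a\oQ$) infimum but as the \emph{exact} optimizer for the $V$ (resp.\ $\max_a Q$) infimum, and it is exactly this asymmetry that produces a one-sided inequality rather than an equality. Everything else is bookkeeping: verifying the Gibbs optimizer, cancelling the $\KL$ terms, and the routine $\max$-difference bound.
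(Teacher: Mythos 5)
Your proposal is correct and takes essentially the same route as the paper's own proof: both rewrite $-\gamma\sigma(\Psa,\cdot)$ via the KL-dual (Donsker--Varadhan) representation and exploit the asymmetric substitution of the Gibbs minimizer $\widehat P_{s,a}$ --- used as a feasible point in the $\oV$ (resp.\ $\max_a \oQ$) infimum but as the exact optimizer of the $V$ (resp.\ $\max_a Q$) infimum --- so that the $\KL$ terms cancel. Your explicit closing step $\max_a \oQ(s',a) - \max_a Q(s',a) \le \max_{a'}(\oQ - Q)(s',a')$ is precisely what the paper intends in its final two displayed lines (which contain minor typos but carry the same content).
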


\begin{lemma}\label{lemma:log}
\begin{equation*}
    |\log x - \log y| \le \frac{1}{\min\{x,y\}}|x-y|
\end{equation*}
\begin{proof}
    Without loss of generality, we assume $x\ge y$, then
    \begin{align*}
        |\log x - \log y| = \log x - \log y = \log(1+\frac{x-y}{y})\\
        \le \frac{x-y}{y} = \frac{1}{y}(x-y) = \frac{1}{\min\{x,y\}}|x-y|,
    \end{align*}
which completes the proof.
\end{proof}
\end{lemma}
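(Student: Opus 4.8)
The plan is to establish this elementary inequality for positive reals $x,y>0$ by combining the monotonicity of $\log$ with a single standard estimate. First I would note that the statement is a trivial equality ($0=0$) when $x=y$, and that every quantity appearing in it---$|\log x - \log y|$, $|x-y|$, and $\min\{x,y\}$---is symmetric under interchanging $x$ and $y$. Hence I may assume without loss of generality that $x \ge y$. Under this assumption $\min\{x,y\}=y$, the absolute value $|x-y|$ equals $x-y$, and, because $\log$ is increasing, $|\log x - \log y| = \log x - \log y$. The claim thus reduces to the one-sided bound $\log x - \log y \le (x-y)/y$.

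To prove this reduced bound I would rewrite $\log x - \log y = \log(x/y) = \log(1 + (x-y)/y)$ and apply the elementary inequality $\log(1+t) \le t$, valid for every $t > -1$, with $t = (x-y)/y \ge 0$. This gives directly $\log x - \log y \le (x-y)/y = |x-y|/\min\{x,y\}$, completing the argument.

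A still shorter alternative avoids the $\log(1+t)$ estimate via the mean value theorem: applied to $\log$ on the interval with endpoints $x$ and $y$, it furnishes a point $\xi$ strictly between them with $\log x - \log y = (x-y)/\xi$; taking absolute values and using $\xi \ge \min\{x,y\}$ yields $|\log x - \log y| = |x-y|/\xi \le |x-y|/\min\{x,y\}$. Since the statement is entirely elementary, there is no substantive obstacle here; the only points requiring a modicum of care are the sign bookkeeping when stripping the absolute values---handled by the symmetry reduction---and confirming that the argument of the logarithm stays in the admissible range $t > -1$, which is automatic once $x \ge y > 0$.
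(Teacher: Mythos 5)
Your proof is correct and follows essentially the same route as the paper's: reduce by symmetry to $x \ge y$, write $\log x - \log y = \log\bigl(1 + (x-y)/y\bigr)$, and apply $\log(1+t) \le t$. Your mean-value-theorem variant is a fine (and slightly slicker) alternative, but the main argument is identical to the paper's, so there is nothing further to reconcile.
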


\end{document}